\newtheorem{theorem}{Theorem}[section]
\newtheorem{proposition}[theorem]{Proposition}
\newtheorem{lemma}[theorem]{Lemma}
\newtheorem*{theorem*}{Theorem}
\theoremstyle{definition}
\newtheorem{definition}[theorem]{Definition}
\newtheorem{remark}[theorem]{Remark}
\newtheorem*{remark*}{Remark}
\renewenvironment{proof}[1][Proof]{\noindent\textit{#1.} }{\hfill 
	\rule{0.5em}{0.5em}}
\numberwithin{equation}{section}
\newcommand{\N}{\mathbb{N}}
\newcommand{\R}{\mathbb{R}}
\newcommand{\Rd}{{\mathbb{R}^d}}
\newcommand{\Z}{\mathbb Z}
\newcommand{\E}{\mathbb E}
\newcommand{\Pp}{\mathbb P}
\newcommand{\1}{\mathbbm 1}
\newcommand{\I}{{\rm I}}
\newcommand{\II}{{\rm II}}
\def\nn{\nonumber}
\newcommand{\switch}{\epsilon}
\newcommand{\HHq}[2]{\big(H_{#1}^{#2}\big)}
\newcommand{\Tj}[2]{\sum_{j={#1}}^{#2} \lengthR_j}
\newcommand{\TT}[2]{\Theta({#1},{#2})}
\newcommand{\gj}[2]{\sum_{j={#1}}^{#2} \gamma_j}
\newcommand{\gam}[2]{\Upsilon({#1},{#2})}
\newcommand{\lengthR}{\theta}
\renewcommand{\d}{\mathrm{d}}
\newcommand{\cSi}{\mathcal{S}^i} 
\newcommand{\cTi}{\mathcal{T}^i} 
\author{Moritz Kassmann}
\email{moritz.kassmann@uni-bielefeld.de}
\address{Fakult\"{a}t f\"{u}r Mathematik, Postfach 100131, D-33501 Bielefeld, Germany}
\author{Kyung-Youn Kim}
\email{kyungyoun07@gmail.com}
\address{Department of Applied Mathematics, National Chung Hsing University, Taichung 402, Taiwan}
\author{Takashi Kumagai}
\email{kumagai@kurims.kyoto-u.ac.jp}
\address{Research Institute for Mathematical Sciences
	Kyoto University, Kyoto 606-8502, Japan}
\title[Heat kernel bounds for systems of nonlocal equations]
{Heat kernel bounds for \\ nonlocal operators with singular kernels}
\subjclass[2010]{Primary 60J75; Secondary 35K08}
\keywords{Markov jump process, heat kernel, integro-differential operator}
\begin{document}
	
\thanks{Financial support of the German Science Foundation through the International Research Training Group Bielefeld-Seoul IRTG 2235,  
the Alexander von Humboldt Foundation and JSPS KAKENHI Grant Number JP17H01093 are 
gratefully acknowledged.}

	\begin{abstract}
		 We prove sharp two-sided bounds of the fundamental solution for integro-differential operators of order $\alpha \in (0,2)$ that generate a $d$-dimensional Markov 
process. The corresponding Dirichlet form is comparable to that of 
$d$ independent copies of one-dimensional jump processes, i.e., the jumping measure is singular with respect to the $d$-dimensional Lebesgue measure. 
	\end{abstract}
	
	\maketitle
	
\section{Introduction}

Heat kernel bounds play an important role in the study of Markov processes and differential operators. In the theory of partial differential equations, corresponding two-sided  
Gaussian bounds 
are known as Aronson bounds. Given uniformly elliptic coefficients $(a_{ij})$, it is shown in \cite{Aro68} that the fundamental solution $\Gamma(t,x;s,y)$ of the operator $u \mapsto \partial_t u - \partial_i ( a_{ij} \partial_j u )$ satisfies for all $t,s > 0$ and $x,y \in \R^d$ the two-sided estimate
\begin{align}\label{eq:aronson-local}
g_1 (t-s, x-y) \leq \Gamma(t,x;s,y) \leq g_2 (t-s, x-y) \,,
\end{align}
where $g_j(t,x) = a_{j} t^{-d/2} \exp(- b_{j} \frac{|x|^2}{t})$ and $a_j, b_j$ are some positive constants. Up to multiplicative constants,  the fundamental solution of the Laplacian $-\Delta$ bounds the fundamental solution of any uniformly elliptic operator in divergence form from above and below. One main feature of the result is that no further regularity of $(a_{ij})$ as a function on $\R^d$ is required. 
Using a more probabilistic language, estimate \eqref{eq:aronson-local} says that the heat kernel of a non-degenerate diffusion is controlled from above and below by the heat kernel of the Brownian Motion. 

\medskip

A similar result for certain integro-differential operators resp. Markov jump processes is obtained in \cite{BaLe02} and \cite{ChKu03}. Let $K(t,x;s,y)$ denote the fundamental solution of the operator
\begin{align}\label{eq:gen-operator-nonlocal}
u \mapsto \partial_t u - 
\mbox{p.v.} 
\int_{\R^d} \big( u(y) - u(x) \big) J(x,y) \; \d y\,,
\end{align}
where $J(x,y)$ is symmetric and satisfies for some $\alpha \in (0,2)$ and $c_1, c_2 > 0$ the relation $c_1 |x-y|^{-d-\alpha} \leq J(x,y) \leq c_2 |x-y|^{-d-\alpha}$ for all $x \ne y$. Then, analogously to \eqref{eq:aronson-local}, the authors establish for all $t,s > 0$ and $x,y \in \R^d$ the two-sided estimate
\begin{align}\label{eq:aronson-nonlocal}
k_1 (t-s, x-y) \leq K(t,x;s,y) \leq k_2 (t-s, x-y) \,,
\end{align}
where $k_j (t,x) = c_{j} t^{-d/\alpha} \big(1 \wedge 
\frac{t}{|x|^\alpha}\big)^\frac{d+\alpha}{\alpha}$ 
and $c_j$ denotes some positive constant. Note that the functions $k_1, k_2$ are known to be comparable with the heat kernel of the isotropic $\alpha$-stable process. As in the case of a diffusion, it turns out that the heat kernel of a non-degenerate jump process behaves like the heat kernel of the corresponding translation-invariant model process. That is, up to multiplicative constants,  the fundamental solution of the fractional Laplacian $(-\Delta)^{\alpha/2} $ bounds the fundamental solution of corresponding non-degenerate integro-differential operator of the form \eqref{eq:gen-operator-nonlocal}.

\medskip

In other words, pointwise heat kernel bounds are robust under bounded multiplicative changes of the coefficients. This statement can be seen as the result obtained in \cite{Aro68} for the Brownian Motion and confirmed in \cite{BaLe02, ChKu03} for non-degenerate isotropic L\'{e}vy stable processes. The main aim of the present work is to show that the robustness result extends to non-degenerate non-isotropic L\'{e}vy stable processes. 

\medskip

Let us introduce the main objects of our study. Consider a Markov jump process $Z$ in $\R^d$ defined by $Z_t =(Z_t^1, \ldots, Z_t^d)$, where the coordinate processes $Z_t^1, \ldots, Z_t^d$ are independent one-dimensional symmetric stable processes of index $\alpha \in (0, 2)$. The infinitesimal generator of the corresponding semigroup of the process $Z$ is the integro-differential operator 
$L= -(-\partial_{11})^{\alpha/2} -(-\partial_{22})^{\alpha/2} - \ldots  - (-\partial_{dd})^{\alpha/2}$, whose symbol resp. multiplier is given by $\sum\limits_{i=1}^d |\xi^i|^\alpha$. The process $Z$ resp. its generator are not to be mixed up with the isotropic $\alpha$-stable process resp. the fractional Laplace operator $- (-\Delta)^{\alpha/2}$, whose symbol is given by $|\xi|^\alpha$. In this work we show that, up to multiplicative constants,  the fundamental solution of the operator $L$ 
 bounds the fundamental solution of a corresponding non-degenerate integro-differential operator with bounded measurable coefficients. In a more probabilistic fashion: We consider a $d$-dimensional pure jump Markov process $X$ in $\R^d$ whose jump kernel is comparable to that one of the process $Z$. We show that the heat kernels of $Z$ and $X$ satisfy the same sharp two-sided estimates.

\medskip

Let us be more precise. Given $\alpha \in (0,2)$, let $\nu$ be a measure on the Borel sets of $\R^d$ defined by
\begin{align*}
\nu(\d h) = \sum\limits_{i=1}^d |h^i|^{-1-\alpha} \d h^i \prod\limits_{j \ne i} \delta_{\{0\}}(\d h^j).
\end{align*}
Then $\nu$ is a non-degenerate $\alpha$-stable L\'{e}vy measure. Its corresponding  process is the process $(Z_t)$ up to a multiplicative constant. The measure $\nu$ charges only sets that have a nonempty intersection with one of the coordinate axes. For $u \in C^\infty_c(\R^d)$, the corresponding generator $L$ 
can be written as follows   
\begin{align*}
Lu (x) = 
\mbox{p.v.} 
\int_{\R^d} \big(u(x+h) - u(x)\big) \nu(\d h) \qquad (x \in \R^d)
\end{align*} 
and one easily computes for $\xi \in \R^d$
\begin{align*}
\mathcal{F}(-Lu)(\xi) = c_\alpha \big( \sum\limits_{i=1}^d |\xi^i|^\alpha \big) \mathcal{F}(u)(\xi) = c_\alpha \mathcal{F}\big(  (-\partial_{11})^{\alpha/2}u+(-\partial_{22})^{\alpha/2}u+\ldots+ (-\partial_{dd})^{\alpha/2}u \big)(\xi)
\end{align*} 
for some constant depending only on $\alpha$. The corresponding Dirichlet form $(\mathcal{E}^\alpha, D^\alpha)$ on $L^2(\R^d)$ is given by 
\begin{align*}
D^\alpha &=\{u\in L^2(\Rd)| \; \mathcal{E}^\alpha (u,u)<\infty\},\\
\mathcal{E}^\alpha (u,v) &=\int_{\Rd}\Big(\sum_{i=1}^d\int_{\R}\big(u(x+e^i \tau) - u(x)\big)\big(v(x+e^i \tau) -  v(x)\big)  \frac{\d \tau}{|\tau|^{1+\alpha}} \Big) \d x \\
&=\int_{\Rd}\Big(\sum_{i=1}^d\int_{\R}\big(u(x+e^i \tau) - u(x)\big)\big(v(x+e^i \tau) -  v(x)\big) J^\alpha (x,x+e^i \tau) \d \tau \Big) \d x \,,
\end{align*}
where $J^\alpha (x,y) = |y^i-x^i|^{-1-\alpha}$ if, for some $i$,  $x^i \ne y^i$ and $x^j = y^j$ for every $j \ne i$. Note that there is no need to specify all values $J^\alpha (x,y)$ with $(x,y) \in \R^d \times \R^d \setminus \operatorname{diag}$. For simplicity, we set $J^\alpha (x,y) = 0$ if $x^i \ne y^i$ for more than one index $i$.

\medskip

Now we can explain our main result in detail. Let $\alpha \in (0,2)$ and $\Lambda \geq 1$ be given. Assume $J: \R^d \times \R^d \setminus \operatorname{diag}$ is a non-negative function satisfying for all $x \ne y$

\begin{align}\label{eq:J-ellipticity-assum}
\Lambda^{-1} J^\alpha (x,y) \leq J(x,y) \leq \Lambda J^\alpha (x,y).
\end{align}

Set
\begin{align*}
D &=\{u\in L^2(\Rd)| \; \mathcal{E} (u,u)<\infty\},\\
\mathcal{E} (u,v) &=\int_{\Rd}\Big(\sum_{i=1}^d\int_{\R}\big(u(x+e^i \tau) - u(x)\big)\big(v(x+e^i \tau) -  v(x)\big) J (x,x+e^i \tau) \d \tau \Big) \d x \,.
\end{align*}

Let $C_c^1(\Rd)$ be the space of $C^1(\Rd)$ functions with compact support, and $\overline{C_c^1(\Rd)}^{\mathcal{E}_1}$ be the closure of $C_c^1(\Rd)$ in $L^2(\R^d)$ with respect to the metric $(\mathcal{E}_1(\cdot, \cdot))^{1/2}$, where $\mathcal{E}_1(u, u):=\mathcal{E}(u, u)+\|u\|_2^2$. $(\mathcal{E},D)$ is  a regular (symmetric) Dirichlet form on $L^2(\R^d)$ where $D=\overline{C_c^1(\Rd)}^{\mathcal{E}_1}$. Moreover, the corresponding Hunt process $X$ has the H\"{o}lder continuous transition density $p_t(x,y)$ on $(0,\infty)\times \Rd\times\Rd$, see 
\cite{Xu13}.

\medskip
	
Here is the main result of the present work.
\begin{theorem}\label{thm:two-sided_bounds}
There exists $C \geq 1$ such that for any $t>0, x, y\in \R^d$
\begin{align*}
		C^{-1} t^{-d/\alpha}\prod_{i=1}^d\left(1\wedge \frac{t^{1/\alpha}}{|x^i-y^i|}\right)^{1+\alpha}\le 
		p_t( x, y)\le C t^{-d/\alpha}\prod_{i=1}^d\left(1\wedge \frac{t^{1/\alpha}}{|x^i-y^i|}\right)^{1+\alpha}.
\end{align*}
\end{theorem}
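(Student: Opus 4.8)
The plan is to establish the two-sided bound by the now-standard route for jump processes: obtain matching on- and near-diagonal estimates via Nash/Faber--Krahn-type inequalities together with Meyer's decomposition of the jump kernel, and then upgrade to the full off-diagonal (in particular the product structure) estimate. Concretely, I would first prove an \emph{on-diagonal upper bound} $p_t(x,x) \leq C t^{-d/\alpha}$. Because the Dirichlet form $\mathcal{E}$ is comparable to $\mathcal{E}^\alpha$, and $\mathcal{E}^\alpha$ is the form of $d$ independent one-dimensional $\alpha$-stable processes, the relevant Nash-type inequality $\|u\|_2^{2+2\alpha/d} \leq C\, \mathcal{E}(u,u)\,\|u\|_1^{2\alpha/d}$ holds by tensorization of the one-dimensional Nash inequality and the comparability \eqref{eq:J-ellipticity-assum}; this gives the $t^{-d/\alpha}$ bound by the Nash--Carlen--Loss argument. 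The matching on-diagonal \emph{lower} bound $p_t(x,x) \geq c\, t^{-d/\alpha}$ follows from the upper bound plus a standard argument controlling $\int p_t(x,y)^2\,dy$ and the conservativeness of $X$ (which holds since constants are $\mathcal{E}$-harmonic and the form is conservative).

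Next I would pass from the on-diagonal bound to the off-diagonal bounds. For the \emph{upper} bound, the key tool is Meyer's construction: split $J = J_0 + J_1$, where $J_1$ is the part supported on large jumps along the axes and $J_0$ the remainder; the process associated to $J_0$ has the same on-diagonal bound, and the large-jump part contributes the polynomial tail. The crucial point here, and where the anisotropy really enters, is that a jump from $x$ to $y$ must be realized as a finite sequence of one-dimensional jumps along the coordinate axes (the measure $\nu$ charges only the axes), so to move by $(x^1-y^1,\dots,x^d-y^d)$ the process must make, for \emph{each} coordinate $i$ with $|x^i-y^i| \geq t^{1/\alpha}$, at least one jump of size $\gtrsim |x^i-y^i|$ in direction $e^i$; iterating Meyer's bound over the $d$ directions produces the product $\prod_i (1 \wedge t^{1/\alpha}/|x^i-y^i|)^{1+\alpha}$. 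For the off-diagonal \emph{lower} bound I would use a standard chaining argument: connect $x$ to $y$ by a path that moves one coordinate at a time, and estimate the probability that $X$ follows a tube around this path, using the near-diagonal lower bound $p_t(x,y) \geq c\,t^{-d/\alpha}$ for $|x-y| \leq \kappa t^{1/\alpha}$ (itself obtained from the on-diagonal lower bound plus Hölder continuity of $p_t$, via \cite{Xu13}) on each of the $d$ legs, together with a Lévy-system/exit-time estimate to insert the required large jumps between legs.

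To make the chaining rigorous I would also need two auxiliary estimates that are routine in this framework: an \emph{exit-time lower bound} $\Pp_x(\tau_{B(x,r)} > t) \geq c$ for $t \leq \varepsilon r^\alpha$, and a \emph{tail/Lévy-system estimate} quantifying $\Pp_x(X$ jumps from $B(x,r)$ to $B(y,r)$ before time $t)$ in terms of $\nu$; both follow from \eqref{eq:J-ellipticity-assum} by comparison with the explicit one-dimensional stable kernels. A convenient organizing principle is that, by the product structure of the target bound, it suffices to treat the ``worst case'' where all $d$ coordinates are displaced by a comparable amount and then reduce the general case by a scaling/monotonicity argument in each coordinate separately; this is legitimate because the model kernel $p^Z_t(x,y) = \prod_i p^{1d}_t(x^i,y^i)$ factorizes and the bound we seek is exactly of that product form.

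The main obstacle I expect is the off-diagonal \emph{upper} bound in the regime where several coordinates are large: one must show the probability of assembling all the necessary axis-parallel large jumps is bounded by the \emph{product} of the individual tail probabilities, not merely by the single largest one. Controlling the interaction between the small-jump (``diffusive'') motion and the several large jumps — ensuring independence-like multiplicativity survives the comparability perturbation \eqref{eq:J-ellipticity-assum} and is not destroyed by the fact that $X$ is genuinely not a product process — is the delicate step; I anticipate handling it by an induction on the dimension $d$, conditioning on the first large jump in a fixed direction and applying the $(d-1)$-dimensional estimate to the remaining coordinates via a careful use of the strong Markov property and Meyer's decomposition iterated direction by direction.
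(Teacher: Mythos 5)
Your proposal relies on Meyer's decomposition to obtain the off-diagonal upper bound, but this is precisely the approach the paper's authors explicitly reject: they note that Meyer's decomposition ``works well'' for kernels comparable to a radially symmetric one, but that ``for heat kernel estimates in a non-isotropic setting, there is no useful method known so far.'' You correctly identify the crux — proving that the probability of assembling $d$ large axis-parallel jumps is bounded by the \emph{product} of the individual tail probabilities, despite the process not being a product process — but you offer only a sketch (``induction on dimension, condition on the first large jump, iterate Meyer direction by direction'') with no concrete mechanism for why the diffusive motion between large jumps and the perturbation allowed by \eqref{eq:J-ellipticity-assum} do not spoil multiplicativity. Conditioning on the first large jump in direction $e^1$ does not reduce the problem to a $(d-1)$-dimensional version, because the generator does not factor and the remaining coordinates feel the full $d$-dimensional motion. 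This is not a technical detail to be filled in; it is the core difficulty the paper is about.

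The paper instead proceeds by a \emph{self-improvement bootstrap}. The lower bound and a weak off-diagonal upper bound of the form $t^{-d/\alpha}\prod_i(1\wedge t/|x^i-y^i|^\alpha)^{\alpha/3}$ are taken from \cite{Xu13} (so your Nash-inequality and chaining arguments essentially reprove known input). The new content is the upper bound, established by iterating a family of hypotheses $\HHq{q}{l}$ (a product bound with exponent $q$ on the first $d-l$ coordinates and the sharp exponent $1+\alpha^{-1}$ on the last $l$) from the trivial case $q=0$ up to the sharp case $l=d-1$, $q=1+\alpha^{-1}$. The engine of each improvement step is the exit-time identity of \autoref{lem:LBGK} (from \cite{BGK09}): one bounds $(P_t f, g)$ by expectations of $P_{t-\tau}f(X_\tau)$ over the two halves of the time interval, decomposes the position of $X_\tau$ into dyadic boxes $A_k$ (\autoref{def:D-sets}, \autoref{lem:jAn}), substitutes the \emph{current} hypothesis $\HHq{q}{l}$ for $P_{t-\tau}f$, and shows the resulting sum gains a factor (\autoref{prop:main}). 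Repeating with a carefully chosen increment $\lambda_l$ (\autoref{lem:Gl}, \autoref{lem:Fl}, \autoref{lem:GF-dminus1}) climbs all the way to the sharp exponent. This bootstrap is what replaces Meyer's decomposition and is the mechanism by which multiplicativity across coordinates is obtained one coordinate at a time, without ever needing the process to factor. You would need to replace your Meyer-based plan with something of this kind; as written, the proposal has a genuine gap exactly where you anticipated an obstacle.
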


The lower bound on $p_t(x, y)$ has already been established in \cite{Xu13} together with some non-optimal upper bound. 

\begin{theorem} \cite[Theorem 3.14 and Theorem 4.21]{Xu13}\label{thm:Xu-upper}
	There exists $C \geq 1$ such that for any $t>0, x, y\in \R^d$
	\begin{align*}
		C^{-1} t^{-d/\alpha}\prod_{i=1}^d\left(1\wedge \frac{t^{1/\alpha}}{|x^i-y^i|}\right)^{1+\alpha}\le 
 p_t( x, y)\le C t^{-d/\alpha}\prod_{i=1}^d\left(1\wedge \frac{t^{1/\alpha}}{|x^i-y^i|}\right)^{\alpha/3}.
	\end{align*}
\end{theorem}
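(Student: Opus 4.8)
The lower bound is already in \cite{Xu13}, and the on-diagonal bound $p_t(x,y)\le Ct^{-d/\alpha}$ follows from Theorem~\ref{thm:Xu-upper} (or directly from the Nash inequality for $\mathcal E^\alpha$, which holds because $\sum_i|\xi^i|^\alpha\asymp|\xi|^\alpha$, together with \eqref{eq:J-ellipticity-assum}). So the whole task is to replace the crude off-diagonal factor $\big(1\wedge t^{1/\alpha}/|x^i-y^i|\big)^{\alpha/3}$ by the sharp product $\prod_i\big(1\wedge t^{1/\alpha}/|x^i-y^i|\big)^{1+\alpha}$. The plan is to run the Davies/Carlen--Kusuoka--Stroock conjugation method, adapted to the singular geometry of $\nu$: since every admissible jump moves the particle along one coordinate axis, the natural weight functions are sums of one-variable functions, and the natural truncation of large jumps is carried out axis by axis.

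\medskip

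\emph{Meyer's decomposition and additive weights.} Fix $x,y$. For a vector $\vec\rho=(\rho_1,\dots,\rho_d)$ of thresholds let $X^{\vec\rho}$ be the process with jump kernel $J(z,z+e^i\tau)\1_{\{|\tau|<\rho_i\}}$ and $p^{\vec\rho}_t$ its (bounded) transition density. Meyer's construction writes $p_t=\sum_{k\ge0}p^{\vec\rho,(k)}_t$, where $p^{\vec\rho,(0)}=p^{\vec\rho}$ and the $k$-th term collects the trajectories making exactly $k$ jumps, each along some axis $i$ of length $\ge\rho_i$; the series converges because the removed part of $\nu$ has finite, purely one-dimensional mass $\int_{|\tau|\ge\rho_i}|\tau|^{-1-\alpha}\,\d\tau\asymp\rho_i^{-\alpha}$ along each axis. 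For the $k=0$ term we apply the conjugation method with $\psi(z)=\sum_{i=1}^d\lambda_i\sigma_i z^i$, where $\lambda_i\ge0$ and $\sigma_i=\operatorname{sign}(x^i-y^i)$. A jump along axis $j$ changes only $z^j$, so the carr\'{e}-du-champ of $e^{\pm\psi}$ for $X^{\vec\rho}$ decouples into a sum of one-dimensional integrals, and, using \eqref{eq:J-ellipticity-assum} and $(e^u-1)^2\le u^2e^{2|u|}$,
\begin{align*}
\big\|e^{-2\psi}\Gamma^{\vec\rho}(e^{\psi})\big\|_\infty+\big\|e^{2\psi}\Gamma^{\vec\rho}(e^{-\psi})\big\|_\infty\ \le\ C\Lambda\sum_{i=1}^d\int_{|\tau|<\rho_i}\big(e^{\lambda_i\tau}-1\big)^2|\tau|^{-1-\alpha}\,\d\tau\ \le\ C\Lambda\sum_{i=1}^d\lambda_i^2\rho_i^{2-\alpha}e^{2\lambda_i\rho_i},
\end{align*}
where convergence of the integrals near $0$ uses $\alpha<2$. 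The standard conjugation estimate then gives the genuinely factorized bound
\begin{align*}
p^{\vec\rho}_t(x,y)\ \le\ C\,t^{-d/\alpha}\prod_{i=1}^d\exp\!\Big(-\lambda_i|x^i-y^i|+C\Lambda\,t\,\lambda_i^2\rho_i^{2-\alpha}e^{2\lambda_i\rho_i}\Big),
\end{align*}
and optimizing each factor separately (take $\lambda_i\rho_i\asymp1$ and tune $\rho_i$) bounds it by $Ct^{-d/\alpha}\prod_i\exp\big(-c(|x^i-y^i|/t^{1/\alpha})^{\beta}\big)$ with some $\beta>2$, which is far below the target and hence negligible.

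\medskip

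\emph{The large-jump terms.} It remains to estimate $\sum_{k\ge1}p^{\vec\rho,(k)}_t(x,y)$. For this one chooses $\rho_i\asymp|x^i-y^i|$ on the coordinates with $|x^i-y^i|\gtrsim t^{1/\alpha}$ (the remaining coordinates contribute a bounded factor), bounds the pieces of trajectory between consecutive large jumps by the on-diagonal estimate of Step~2, and exploits that a large jump along axis $i$ landing in the $t^{1/\alpha}$-neighbourhood of $y^i$ --- where the surviving heat-kernel mass is concentrated --- carries intensity $\asymp|x^i-y^i|^{-1-\alpha}t^{1/\alpha}$. Summing over $k$ and over which axis each jump uses, the series collapses to $C\,t^{-d/\alpha}\prod_{i=1}^d\big(1\wedge t^{1/\alpha}/|x^i-y^i|\big)^{1+\alpha}$; in probabilistic terms, reaching a far-off value $y^i$ forces a single large jump along axis $i$, these axis-aligned jumps occur at independent random times, and their contributions multiply. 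Comparison with the lower bound of \cite{Xu13} then finishes the proof.

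\medskip

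\emph{Where the difficulty lies.} Since $\nu$ is singular with respect to Lebesgue measure, $X$ is \emph{not} a bounded perturbation of the isotropic $\alpha$-stable process, so the robustness results of \cite{BaLe02,ChKu03} cannot be quoted, and a plain ``isotropic'' use of the Davies/Meyer machinery would yield only the weaker exponent $\tfrac{d+\alpha}{\alpha}$ on $1\wedge t/|x-y|^\alpha$. Recovering the true product tail forces one to track jump directions everywhere: additive weights in the conjugation step, a vector threshold $\vec\rho$ in Meyer's decomposition, and --- the technical heart of the matter --- a bookkeeping of the Meyer series that preserves the full power $1+\alpha$ in every coordinate while handling all relative sizes of $t^{1/\alpha}$ and of the $d$ numbers $|x^i-y^i|$ at once. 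Reconciling a single time scale with $d$ independent spatial scales is the main obstacle; the H\"{o}lder continuity of $p_t$ proved in \cite{Xu13} is what legitimizes the pointwise manipulations.
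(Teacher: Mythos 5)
This statement is a \emph{citation}: the paper attributes both inequalities to \cite[Theorem~3.14 and Theorem~4.21]{Xu13} and does not reprove them. Your write-up is therefore answering a different question from the one posed --- you are sketching a proof of the paper's \emph{main} theorem (the sharp upper bound with exponent $1+\alpha$), not of the quoted result, which deliberately records the weaker exponent $\alpha/3$ in the upper bound precisely because that is all \cite{Xu13} obtained.

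Beyond the mismatch, the substantive part of your argument has a genuine gap. The Davies conjugation plus Meyer decomposition you outline is essentially the method of \cite{Xu13}, and the paper explicitly flags that this route is only known to give the exponent $\alpha/3$ and leaves open whether it can be pushed to the optimal $1+\alpha$. Your crucial step --- that, after choosing $\rho_i\asymp|x^i-y^i|$ and bounding the pieces between large jumps on-diagonal, ``the series collapses to $C\,t^{-d/\alpha}\prod_i\big(1\wedge t^{1/\alpha}/|x^i-y^i|\big)^{1+\alpha}$'' --- is asserted, not proved. The obstruction is that although the model process $Z$ has independent coordinates, $X$ does not: the comparability in \eqref{eq:J-ellipticity-assum} couples the coordinate dynamics, so the ``independent axis-aligned large jumps whose contributions multiply'' heuristic does not yield the product bound without substantial work, and a na\"ive accounting of the Meyer series over all $d$ axes at a single time scale loses exactly the sharpness you need. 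This is why the paper instead uses the iterative self-improvement scheme (the conditions $\HHq{q}{l}$, \autoref{lem:Gl}--\autoref{lem:GF-dminus1}, and \autoref{prop:main}) rather than Davies/Meyer.

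A minor point: you first claim the on-diagonal bound ``follows from Theorem~\ref{thm:Xu-upper},'' which would be circular for this statement; the Nash-inequality alternative you offer in parentheses is the correct justification (and is also what \cite[Proposition 3.12]{Xu13}, quoted as \autoref{prop:xu1}, relies on).
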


It has been an open problem to establish the matching upper bound. Our result solves this problem and, together with the lower bound 
from \cite{Xu13}, we obtain the desired two-sided heat kernel estimates. 

\medskip

Let us explain the novelty of our paper. 
In general, obtaining the off-diagonal heat kernel
upper bound requires hard work because one has to sum up all the possible trajectories of the process moving from $x$ to $y$ in time $t$.  
For diffusion processes, the so-called Davies method (resp. its extension by Carlen-Kusuoka-Stroock) is a 
very useful analytical method to derive the Gaussian upper 
bound. For jump processes, if the jumping kernel is comparable to a radially symmetric kernel (namely an isotropic case), then the so-called Meyer's decomposition that decomposes the jumping kernel into small jumps and large jumps works well. However, for heat kernel estimates in a non-isotropic setting, there is no useful method known so far.   
In fact, within the framework of 
non-isotropic stable-like processes, the present work is the first one to establish robustness of heat kernel estimates in a non-isotropic setting. Our method is a self-improvement 
method of the off-diagonal upper bound. The idea of this method comes from \cite{BGK09}, however  
\cite{BGK09} treats only the isotropic case and our method 
is much more involved in order to take care of the non-isotropy. 
We note that the proof of the upper bound of \autoref{thm:Xu-upper} by \cite{Xu13} uses the Davies method; it is an interesting question whether one can prove the optimal estimate with this method or not. 

\medskip

Let us formulate a conjecture that, based on the aforementioned results, looks promising. 

\medskip

{\bf Conjecture:} \emph{Let $Z$ be a non-degenerate $\alpha$-stable process in $\mathbb{R}^d$ with L\'{e}vy measure $\nu$. Let $X$ be a symmetric Markov process whose Dirichlet form has a symmetric jump intensity $J(x, d y)$ that is comparable to the one of $Z$, i.e., $J(x, \d y) \asymp \nu(x-\d y)$.  Then the heat kernel of $X$ is comparable to the one of $Z$.}

\medskip 

The conjecture is proved in \cite{ChKu03} in the case $\nu(\d h)= |h|^{-d-\alpha} \d h$. The present work establishes the conjecture in the non-isotropic singular case 
\begin{align*}
\nu(\d h) = \sum\limits_{i=1}^d |h^i|^{-1-\alpha} \d h^i \prod\limits_{j \ne i} \delta_{\{0\}}(\d h^j)
\end{align*}
where $\alpha \in (0,2)$.  Both cases are limit cases for non-degenerate $\alpha$-stable L\'{e}vy measures. Hence, it is plausible to expect the assertion of the conjecture to be true. 

\medskip

We have already mentioned some related results from the literature. Let us mention some further results related to systems of jump processes driven by stable processes resp. to nonlocal operators with singular jump intensities. These works 
mainly address regularity questions, which is a closely related topic. The weak Harnack inequality and H\"{o}lder regularity estimates for solutions to parabolic equations driven by the Dirichlet form $\mathcal{E}$ under assumption \eqref{eq:J-ellipticity-assum} have been established in \cite{KaSc14}. In the elliptic setting, a general approach to the weak Harnack inequality for singular and non-singular cases is developed in \cite{DyKa20}. 

\medskip

Systems of Markov jump processes of the form 
\[
\d Y^i_t = \sum\limits_{j=1}^d A_{ij} (Y_{t-}) \d Z^j_t
\]
with one-dimensional independent symmetric $\alpha_j$-stable components $Z^j_t$, $j=1,\ldots, d$, are studied in several works. In the case 
$\alpha= \alpha_j$ for all $j$,  
\cite{BaCh06} establishes unique weak solvability via the martingale problem. H\"{o}lder regularity of corresponding harmonic functions is provided in \cite{BaCh10}. Of course, some conditions on the matrix-valued function $A$ need to be imposed. In \cite{KRS20} the authors prove the strong Feller property for the corresponding stochastic jump process. They show that for any fixed $\gamma \in (0, \alpha)$ the semigroup $(P_t)$ of the process $(X_t)$ satisfies 
\[
|P_t f(x) - P_tf(y)| \leq c t^{-\frac{\gamma}{\alpha}} |x-y|^\gamma  \|f\|_{\infty}
\]
for bounded Borel functions $f$ and $x,y \in \R^d$. The regularity results of \cite{BaCh10} are extended in \cite{Cha20} to the case where the index of stability $\alpha_j$ is different for different components $Z^j$. Existence and uniqueness for weak solutions in this case is proved in \cite{Cha19}. However, for the uniqueness the article assumes the matrix $A_{ij}$ to be diagonal. Proving uniqueness under natural assumptions in this case seems to be a challenging problem. 
\medskip

In a recent paper \cite{KuRy18}, two-sided heat kernel estimates similar to ours are obtained when  
the matrix $A_{ij}$ is diagonal and the diagonal
elements are bounded and H\"{o}lder continuous. 
They use a method based on Levi's freezing coefficient argument. We note that their method does not seem to work in our case.   
\medskip

Another interesting open question in this context is the Feller property which is established in \cite{KuRy20} under the assumption $\min(\alpha_j) \geq 2/3 \max(\alpha_j)$. This condition is not required in the study of H\"{o}lder regularity of weak solutions  to nonlocal equations stemming from symmetric Dirichlet form as in \cite{KaSc14, DyKa20}, cf. \cite{ChKa20}.

\medskip
	
The paper is organized as follows: In \autoref{sec:prelim} we present the main strategy of our proof and provide some auxiliary results. In \autoref{subsec:strategy} we formulate three lemmas, which imply our main result. \autoref{sec:proof-lemmas} is devoted to the proof of these lemmas. In \autoref{sec:proof-propo} we provide the proof of our main auxiliary result, which is \autoref{prop:main}. Finally, we provide the proof of an important but less innovative auxiliary result (\autoref{lem:symmetry}) in an appendix.

\bigskip
\section{Auxiliary results and strategy of the main proof}\label{sec:prelim}

In this section we present some auxiliary result and discuss the strategy of our proof. In \autoref{subsec:strategy} we explain three lemmas, which are the main building blocks of our proof. In \autoref{sec:auxiliary} we provide a few auxiliary results. First of all, let us explain the notation that we are using. 

\medskip

\emph{Notation:} As is usual, $\N_0$ denotes the non-negative integers including Zero. For two non-negative functions $f$ and $g$, the notation $f\asymp g$ means that there are positive constants $c_1$ and $c_2$ such that $c_1g(x)\leq f (x)\leq c_2 g(x)$ in the common domain of definition for $f$ and $g$. For $a, b\in \R$, we use $a\wedge b$ for $\min \{a, b\}$ and $a\vee b$ for $\max\{a, b\}$. Given any sequence $(a_n)$ of real numbers and $n_1, n_2\in \N_0$, we set $\prod_{n=n_1}^{n_2} a_n$ as equal to $1$ if $n_1>n_2$.

\subsection{Strategy of the main proof}\label{subsec:strategy}

In this subsection, we present the main strategy of the proof of \autoref{thm:two-sided_bounds}. 
As explained in the introduction, due to \cite{Xu13} we only need to establish the upper bound, i.e., we will prove the following result:

\medskip

\begin{theorem}\label{thm:main}
	Assume $\alpha \in(0,2)$. There is a positive constant $C$ such that for all $x, y\in\R^d$, $t>0$ the following estimate holds: 
	\begin{align}\label{eq:upper-bd-mulit}
	p_t(x, y)\le C t^{-d/\alpha}\prod_{i=1}^d\left(\frac{t}{|x^i-y^i|^\alpha}\wedge 1\right)^{1+\alpha^{-1}}\,.
	\end{align}
\end{theorem}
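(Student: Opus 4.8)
The strategy is a bootstrap (self-improvement) argument on the exponent in the off-diagonal upper bound. We already have from \autoref{thm:Xu-upper} an estimate of the form $p_t(x,y)\le C t^{-d/\alpha}\prod_{i=1}^d(1\wedge t^{1/\alpha}/|x^i-y^i|)^{\beta}$ with $\beta = \alpha/3$, and we want to reach $\beta = 1+\alpha^{-1}$. The plan is to show that whenever such an estimate holds with some exponent $\beta\in(0,1+\alpha^{-1})$, it also holds with a strictly larger exponent $\beta' = \beta + \delta$ for some fixed $\delta>0$ (independent of $\beta$), up to adjusting the constant $C$; iterating finitely many times then yields the desired exponent. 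The key analytic input will be \autoref{prop:main}, which presumably quantifies, via a Meyer-type splitting of the jump kernel into ``near-axis short jumps'' and ``long jumps along a single coordinate direction'', how one step of the process improves the decay in each coordinate separately. Because the Lévy measure $\nu$ is supported on the union of the coordinate axes, the process moves in only one coordinate at a time, so the multi-dimensional estimate factors into a product of essentially one-dimensional contributions; the bootstrap must be run coordinate by coordinate, tracking the exponent $1+\alpha^{-1}$ in each factor.

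Concretely, I would proceed as follows. First, reduce to proving the implication ``exponent $\beta$ $\Rightarrow$ exponent $\beta+\delta$'' by the Chapman--Kolmogorov identity $p_t(x,y)=\int p_{t/2}(x,z)p_{t/2}(z,y)\,\d z$: split the integral over $z$ according to which coordinates of $z$ are close to those of $x$ versus those of $y$. Second, for a fixed coordinate $i$ with $|x^i-y^i|$ large compared to $t^{1/\alpha}$, use the fact that to change the $i$-th coordinate by a macroscopic amount the process must make a jump along the $i$-th axis; estimate the probability of such a jump using the explicit form of $\nu$ together with the inductive bound on $p$ to control the time spent and the location before the jump. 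The exact gain $\delta$ comes from comparing the crude probability of a single large jump (which already gives the correct power $1+\alpha$ in the jump-size variable once integrated against the kernel $|h^i|^{-1-\alpha}\,\d h^i$) with the inductively assumed weaker bound. Third, assemble the one-coordinate gains into the product bound; here one must be careful that the coordinates interact only through the common time budget $t$, and that summing over the finitely many ways of splitting $\{1,\dots,d\}$ into ``$x$-side'' and ``$y$-side'' coordinates costs only a dimensional constant.

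The main obstacle, and the reason this is harder than the isotropic case of \cite{BGK09}, is the singularity of $\nu$: there is no regularization in the ``transverse'' directions, so after a jump along the $i$-th axis the process sits on a lower-dimensional affine subspace and the density $p_t$ is genuinely anisotropic at all scales. One cannot simply use the diagonal on-diagonal bound $p_t(z,z)\lesssim t^{-d/\alpha}$ as a black box in the transverse directions; instead the inductive hypothesis must already carry anisotropic information, and the splitting of $z$ in Chapman--Kolmogorov has to respect this. Managing the bookkeeping of which coordinates are ``resolved'' at which stage, and ensuring the gain $\delta$ does not degrade to $0$ as $\beta$ approaches the target, is the delicate part; this is precisely what \autoref{prop:main} is designed to encapsulate, and the three lemmas announced in \autoref{subsec:strategy} presumably package (i) the short-jump/long-jump decomposition, (ii) the one-step improvement in a single coordinate, and (iii) the iteration/assembly. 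I expect the proof of \autoref{thm:main} itself, given those lemmas, to be a short deduction: invoke \autoref{thm:Xu-upper} for the base case, apply the improvement lemma finitely many times, and observe that the exponent $1+\alpha^{-1}$ is a fixed point / terminal value of the induction because beyond it the single-large-jump heuristic is sharp.
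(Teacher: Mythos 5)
Your high-level picture — bootstrap the off-diagonal exponent by analyzing the single large jump that must occur along a coordinate axis, with the exit-time probability providing the gain — is the right intuition and does match the spirit of the paper. But there are two genuine gaps between your plan and an argument that would actually close.

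First, you propose improving a \emph{single} uniform exponent $\beta$ in $\prod_i (1\wedge t^{1/\alpha}/|x^i-y^i|)^\beta$ to $\beta+\delta$, iterating. That does not work, because the exit-time argument improves the exponent in only \emph{one} coordinate per step: when the process leaves a ball $B(x_0,s)$ it does so by a single axial jump, so the inductive estimate on $P_{t-\tau}f(X_\tau)$ inherits the old exponent $\beta$ in the unaffected $d-1$ coordinates. If several coordinates simultaneously have distances $\gg t^{1/\alpha}$, one step of your iteration does not increase the product exponent uniformly. The paper's way around this is the two-parameter condition $\HHq{q}{l}$: one orders the coordinates by $|x^i-y^i|$ and tracks an exponent $q$ on the first $d-l$ (small) coordinates while the last $l$ coordinates already carry the terminal exponent $1+\alpha^{-1}$. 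The iteration is nested: first push $q$ past $\alpha^{-1}$ (\autoref{lem:Gl}), then promote one more coordinate to the terminal exponent and reset $q$ to $0$ (\autoref{lem:Fl}), and repeat through $l=0,1,\dots,d-1$. This ordering also forces an extra symmetry step (\autoref{lem:symmetry}), since $\HHq{q}{l}$ is stated only for the ordered tuple; without it the scheme would not be well-defined for general $x,y$. Your proposal neither introduces the mixed condition nor explains why a uniform exponent can be bootstrapped, and I do not see how to salvage it without essentially reinventing $\HHq{q}{l}$.

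Second, the gain $\delta$ is not free: in the proof of \autoref{lem:Gl} one has the freedom to choose the radius of the exit ball, parametrized by $j_0\in\{i_0,\dots,d-l\}$, and the improvement $\lambda_l$ comes from taking the \emph{minimum} of the resulting bounds $G_{j_0}(l)$ over $j_0$ and then optimizing (the case analysis with $\lambda_l^{i_0}$ and $\Theta(\cdot,\cdot)$). Your proposal treats the jumping coordinate $i$ as given and skips this minimization entirely, which is precisely where the hard bookkeeping lives. Relatedly, the paper uses the Barlow--Grigor'yan--Kumagai first-exit decomposition (\autoref{lem:LBGK}) rather than Chapman--Kolmogorov with a spatial split of $z$; your opening step (``split the integral over $z$ according to which coordinates of $z$ are close to $x$ versus $y$'') is a different device, and you later tacitly switch to the exit-time formulation in your second step. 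Either could in principle be made to work, but the Chapman--Kolmogorov split does not by itself isolate the single axial jump as a discrete event. Finally, a minor point: the paper's base case is $\HHq{0}{0}$, i.e.\ the on-diagonal bound of \autoref{prop:xu1}, not the $\alpha/3$ estimate of \autoref{thm:Xu-upper}; the latter is not needed.
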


In the remaining part of this section, we explain the skeleton of the proof of \autoref{thm:main}. We are able to reduce the proof to three auxiliary lemmas, which we approach in the following section. For any $q > 0$ and $l\in \{1, \ldots, d-1\}$,  we consider the following conditions.

\medskip

\begin{itemize}
	\item[{\bf $\HHq{q}{0}$}]
	There exists a positive constant 
$C_0=C(q,\Lambda, d, \alpha)$ 
such that for all $t>0,\, x, y\in \R^d$,
	\begin{align}
	p_t(x, y)\le C_0 t^{-d/\alpha}\prod_{i=1}^d\left(\frac{t}{|x^i-y^i|^\alpha}\wedge 
	1\right)^{q}.\label{eq:H0}
	\end{align}
	\item[{\bf $\HHq{q}{l}$}]
	There exists a positive constant $C_l=C(l,q,\Lambda, d, \alpha)$ such that for all $t>0$ and all $x, y\in \R^d$ with $|x^{1}-y^{1}| \leq \ldots \leq |x^{d}-y^{d}|$ the following holds true:
	\begin{align}
			p_t(x, y)\le C_l t^{-d/\alpha}\prod_{i=1}^{d-l}\left(\frac{t}{|x^{i}-y^{i}|^\alpha}\wedge 	1\right)^{q}
	\prod_{i= d-l+1}^d\left(\frac{t}{|x^{i}-y^{i}|^\alpha}\wedge 	1\right)^{1+\alpha^{-1}}\,. \label{eq:Hi}
	\end{align}
\end{itemize}

\begin{remark}\label{rem:constant-Hlq}
	Note that the constant $C_l$ from \eqref{eq:Hi} depends on the jumping kernel $J$ only through the constant $\Lambda$, i.e., different choices of $J$ lead to the same estimate as long as \eqref{eq:J-ellipticity-assum} remains true.
\end{remark}

Let us make some further observations.

\begin{samepage}
	\begin{remark}\label{rem:H-fundamentals}{\ } 
		\begin{enumerate}
			\item{} The assertion of \autoref{thm:main} is equivalent to $\HHq{1+\alpha^{-1}}{l}$ for any $l\in \{0, \ldots, d-1\}$.
			\item{}	Note that $\HHq{q}{l}$ gets stronger 
as $q$ increases, 
that is, for $q<q'$, $\HHq{q'}{l}$ implies $\HHq{q}{l}$.
			\item{} For $q\in [0, 1+\tfrac{1}{\alpha}]$ and $l\le l'$, $\HHq{q}{l'}$ implies $\HHq{q}{l}$.
		\end{enumerate}
	\end{remark}
\end{samepage}

The above three observations can be established easily. The following lemma shows that \eqref{eq:Hi} implies a much stronger result due to \autoref{rem:constant-Hlq}.

\begin{lemma}\label{lem:symmetry}
Assume condition $\HHq{q}{l}$ holds true for some $q\in[0, 1+\alpha^{-1}]$. Then with the same constant 
$C_l=C(l,q,\Lambda,d,\alpha)$ 
for every $t>0$, all $x, y\in \R^d$, and every permutation $\sigma$ of the indices ${1, \ldots, d}$ that satisfies $|x^{\sigma(1)}-y^{\sigma(1)}| \leq \ldots \leq |x^{\sigma(d)}-y^{\sigma(d)}|$ the following holds true:
		\begin{align}
		p_t(x, y) &\le C_l t^{-d/\alpha}\prod_{i=1}^{d-l}\left(\frac{t}{|x^{\sigma(i)}-y^{\sigma(i)}|^\alpha}\wedge 	1\right)^{q}
		\prod_{i= d-l+1}^d\left(\frac{t}{|x^{\sigma(i)}-y^{\sigma(i)}|^\alpha}\wedge 	1\right)^{1+\alpha^{-1}} \label{eq:Hi-permute} \\
		&\le C_l t^{-d/\alpha}\prod_{i=1}^{d-l}\left(\frac{t}{|x^i-y^i|^\alpha}\wedge 	1\right)^{q}
		\prod_{i= d-l+1}^d\left(\frac{t}{|x^i-y^i|^\alpha}\wedge 	1\right)^{1+\alpha^{-1}}. \label{eq:Hi-no-permute}
		\end{align}
\end{lemma}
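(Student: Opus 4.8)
The plan is to obtain both \eqref{eq:Hi-permute} and \eqref{eq:Hi-no-permute} from the single hypothesis $\HHq{q}{l}$, the key leverage being \autoref{rem:constant-Hlq}: the constant $C_l$ is the same for every jump kernel satisfying \eqref{eq:J-ellipticity-assum} with the given $\Lambda$. Since the class of admissible kernels is invariant under permuting the coordinate axes, applying $\HHq{q}{l}$ to a suitably permuted copy of the process $X$ should yield the permuted bound \eqref{eq:Hi-permute}, and \eqref{eq:Hi-no-permute} will then follow from an elementary monotonicity argument based on $q\le 1+\alpha^{-1}$.

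In detail, fix $x,y\in\Rd$ and a permutation $\sigma$ with $|x^{\sigma(1)}-y^{\sigma(1)}|\le\cdots\le|x^{\sigma(d)}-y^{\sigma(d)}|$, and let $R_\sigma\colon\Rd\to\Rd$ be the linear isometry given by $(R_\sigma z)^i=z^{\sigma(i)}$. It preserves Lebesgue measure and maps coordinate axes to coordinate axes. First I would check that $U\colon u\mapsto u\circ R_\sigma^{-1}$ is a unitary operator on $L^2(\Rd)$ that restricts to a bijection of $C_c^1(\Rd)$ onto itself and satisfies $\mathcal{E}^\sigma(Uu,Uu)=\mathcal{E}(u,u)$, where $\mathcal{E}^\sigma$ is the Dirichlet form built from the kernel $J_\sigma(x,y):=J(R_\sigma^{-1}x,R_\sigma^{-1}y)$; this amounts to a change of variables in the defining double integral together with a relabelling of the sum over the $d$ axes. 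Since $J^\alpha$ is itself invariant under $R_\sigma^{-1}\times R_\sigma^{-1}$ — a jump of $R_\sigma^{-1}x,R_\sigma^{-1}y$ along one axis corresponds to a jump of $x,y$ along another axis of the same length — the kernel $J_\sigma$ again satisfies \eqref{eq:J-ellipticity-assum} with the same $\Lambda$. Therefore $(\mathcal{E}^\sigma,D^\sigma)$ is unitarily equivalent to $(\mathcal{E},D)$ via $U$, it is again a regular Dirichlet form with core $C_c^1(\Rd)$, its associated Hunt process is the image $R_\sigma(X)$, and its transition density is the H\"older continuous function $p_t^\sigma(x,y)=p_t(R_\sigma^{-1}x,R_\sigma^{-1}y)$.

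Next I would apply $\HHq{q}{l}$ to the process attached to $\mathcal{E}^\sigma$ — legitimate, and with the very same constant $C_l$, by \autoref{rem:constant-Hlq} — at the points $R_\sigma x$ and $R_\sigma y$; these are admissible for $\HHq{q}{l}$ because $(R_\sigma x)^i-(R_\sigma y)^i=x^{\sigma(i)}-y^{\sigma(i)}$ is nondecreasing in $i$ by the choice of $\sigma$. Substituting $p_t^\sigma(R_\sigma x,R_\sigma y)=p_t(x,y)$ and $|(R_\sigma x)^i-(R_\sigma y)^i|=|x^{\sigma(i)}-y^{\sigma(i)}|$ into the resulting inequality gives precisely \eqref{eq:Hi-permute}. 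For \eqref{eq:Hi-no-permute}, write $a_i:=\tfrac{t}{|x^i-y^i|^\alpha}\wedge 1\in(0,1]$ and factor $\prod_{i=1}^d a_i^{\,q}$ out of both sides; using $q\le 1+\alpha^{-1}$, what remains is to show $\prod_{i=d-l+1}^d a_{\sigma(i)}\le\prod_{i=d-l+1}^d a_i$, and this holds because $a_{\sigma(d-l+1)},\dots,a_{\sigma(d)}$ are the $l$ smallest values among $a_1,\dots,a_d$ (they correspond to the $l$ largest coordinate distances), so their product is the smallest among all $l$-fold subproducts.

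I expect the only genuine work to be the image-process identification in the second step: verifying that $R_\sigma(X)$ is the Hunt process associated with $\mathcal{E}^\sigma$ and that its transition density equals $p_t$ composed with $R_\sigma^{-1}\times R_\sigma^{-1}$, including the bookkeeping showing $\mathcal{E}^\sigma$ is regular with domain $\overline{C_c^1(\Rd)}^{\mathcal{E}_1^\sigma}$. This is standard but somewhat lengthy, which is why it is natural to place the complete verification in an appendix; the passage from \eqref{eq:Hi-permute} to \eqref{eq:Hi-no-permute} is entirely elementary.
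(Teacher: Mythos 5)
Your proposal is correct and takes essentially the same approach as the paper: you introduce a permuted jump kernel, show its heat kernel equals $p_t$ composed with the coordinate permutation, and apply $\HHq{q}{l}$ with the constant uniform in $J$ thanks to \autoref{rem:constant-Hlq}, then deduce \eqref{eq:Hi-no-permute} from \eqref{eq:Hi-permute} by a monotonicity argument. The only differences are cosmetic: the paper implements the heat-kernel identification rigorously via resolvents, the reproducing property and uniqueness of Laplace transforms (precisely the \emph{genuine work} you flag as needing an appendix), and it establishes the monotonicity step via a separate induction on transpositions (\autoref{lem:number-symm}), whereas you factor out $\prod_i a_i^q$ and compare $l$-fold subproducts directly.
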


\begin{remark*}
\autoref{lem:symmetry} reduces the complexity of our iterative argument. In particular, it proves that condition $\HHq{q}{l}$  with $q\in[0, 1+\alpha^{-1}]$ implies condition $\HHq{q}{l}'$  with $q\in[0, 1+\alpha^{-1}]$, which is defined as follows:
\begin{itemize}
	\item [{\bf $\HHq{q}{l}'$}]
	Given $q > 0$ and $l\in \{1, \ldots, d-1\}$ there exists a positive constant 
	$C_l=C(l,q,\Lambda,d,\alpha)$ 
such that for all $t>0$ and all $x, y\in \R^d$ 
	\begin{align} p_t(x, y)\le C_l t^{-d/\alpha}\prod_{i=1}^{d-l}\left(\frac{t}{|x^i-y^i|^\alpha}\wedge 	1\right)^{q}
	\prod_{i= d-l+1}^d\left(\frac{t}{|x^i-y^i|^\alpha}\wedge 	1\right)^{1+\alpha^{-1}}. \label{eq:Hi-super}
\end{align}
\end{itemize}
\end{remark*}

\autoref{lem:symmetry} is trivial if $l=d$ or $l=0$. We provide the proof of \autoref{lem:symmetry} in the appendix.

\medskip

Next, let us explain in detail how the main proof makes use of the condition $\HHq{q}{l}$. Set
\begin{align*}
\lambda_l:=\frac{1}{2}\Big(\sum_{i=1}^{d-l-1}(1+\alpha^{-1})^i\Big)^{-1}\quad \mbox{for} \quad l\in \{0,1,\ldots, d-2\},\quad\mbox{ and }\quad\lambda_{d-1}:=1.
\end{align*}
Note  $\tfrac{1}{2}\big(\tfrac{\alpha}{1+\alpha}\big)^{2(d-l-1)}\le \lambda_l \le \tfrac{\alpha}{2(1+\alpha)}$ for $l\in \{0,1,\ldots, d-2\}$, where the first inequality follows from
\begin{align*}
\lambda_l \geq \tfrac12 \big( (d-l-1) (\tfrac{\alpha + 1}{\alpha})^{d-l-1} \big)^{-1} \geq \tfrac{1}{2}\big(\tfrac{\alpha+1}{\alpha}\big)^{-2(d-l-1)}\,,
\end{align*} 
which makes use of $\tfrac{\alpha+1}{\alpha} > \frac32$. Our main aim is to prove assertion $\HHq{1+\alpha^{-1}}{d-1}$. It will be the last assertion in a sequence of assertions which are proved subsequently in the following order:

\begin{alignat*}{4}
&\HHq{0}{0} &&\hookrightarrow \HHq{\lambda_0}{0} &&\hookrightarrow \HHq{2\lambda_0}{0} \ldots &&\hookrightarrow \HHq{N_0\lambda_0}{0} \\
\hookrightarrow &\HHq{0}{1} &&\hookrightarrow \HHq{\lambda_1}{1} &&\hookrightarrow \,\, \ldots  \ldots \ldots \,\,  &&\hookrightarrow \HHq{N_1\lambda_1}{1} \\
&&&\vdots&&&\vdots&\\
\hookrightarrow &\HHq{0}{d-1} &&\hookrightarrow \HHq{1}{d-1} &&\hookrightarrow \ldots  \ldots \ldots &&\hookrightarrow \HHq{N_{d-1}}{d-1} 
\end{alignat*}
where $N_l:=\lfloor 1+\tfrac{\alpha^{-1}}{\lambda_l} \rfloor$ for $l\in \{0, \ldots, d-1\}$. 
Note $N_{d-2}= \lfloor  1 + \tfrac{2}{\alpha}+\tfrac{2}{\alpha^2} \rfloor \geq  2$
and $N_{0}= \lfloor  1 +  \tfrac{2}{\alpha}\sum_{i=1}^{d-1}(1+\alpha^{-1})^i \rfloor  \geq \tfrac{2}{\alpha}\sum_{i=1}^{d-1}(1+\alpha^{-1})^i \geq (\tfrac{3}{2})^{d-1}$.

\medskip

The above scheme will be established with the help of the following implications. Note that they make use of \autoref{lem:symmetry}.

\begin{lemma}\label{lem:Gl}
Assume condition $\HHq{q}{l}$ holds true for some $l\in\{0, \ldots, d-2\}$ and  $q<\alpha^{-1}$. Then $\HHq{q+\lambda_l}{l}$ holds true.
\end{lemma}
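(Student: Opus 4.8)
\textbf{Proof plan for Lemma \ref{lem:Gl}.}

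The plan is to run a single iteration step of a Davies-type / Nash-type self-improvement argument that upgrades the exponent $q$ to $q+\lambda_l$ while keeping the index $l$ fixed. First I would fix $x,y$ with $|x^{1}-y^{1}| \leq \ldots \leq |x^{d}-y^{d}|$ and set $r_i := |x^i - y^i|$; the interesting regime is when the first $d-l$ coordinates satisfy $t \leq r_i^\alpha$ (otherwise the factor $t/r_i^\alpha \wedge 1 = 1$ and there is nothing to gain in that coordinate, so one reduces to a lower-dimensional situation handled by \autoref{rem:H-fundamentals}(3) and \autoref{lem:symmetry}). Using the semigroup/Chapman--Kolmogorov identity at an intermediate time, $p_t(x,y) = \int_{\R^d} p_{t/2}(x,z)\, p_{t/2}(z,y)\,\d z$, I would split $\R^d$ according to whether the midpoint $z$ is ``close to $x$'' or ``close to $y$'' in the relevant coordinates, so that at least one of $|x^i - z^i|$, $|z^i - y^i|$ is comparable to $r_i$ for each $i \in \{1,\dots,d-l\}$. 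On each such piece, applying the hypothesis $\HHq{q}{l}$ (via its permutation-invariant strengthening $\HHq{q}{l}'$ from the Remark after \autoref{lem:symmetry}) to one factor and the trivial on-diagonal bound $p_{t/2}(z,w) \lesssim t^{-d/\alpha}$ to the other, and then integrating in $z$, one recovers a bound of the shape $t^{-d/\alpha}\prod_{i=1}^{d-l}(t/r_i^\alpha)^{q'} \prod_{i>d-l}(t/r_i^\alpha \wedge 1)^{1+\alpha^{-1}}$ with a slightly improved $q' = q + \text{(gain)}$; the role of the specific value $\lambda_l = \tfrac12\big(\sum_{i=1}^{d-l-1}(1+\alpha^{-1})^i\big)^{-1}$ is to quantify the guaranteed gain in terms of the available integration in the $l$ ``good'' coordinates and the exponents $(1+\alpha^{-1})$ already present there.

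More concretely, I expect the gain to come not from a single convolution but from iterating the large-jump (Meyer-type) decomposition in one coordinate at a time: one writes the process killed on making a jump larger than a threshold $\rho$ in coordinate $i$, estimates the killed heat kernel by the current hypothesis, and pays for the jump with the tail of the Lévy measure $\int_{|\tau|>\rho}|\tau|^{-1-\alpha}\d\tau \asymp \rho^{-\alpha}$. Optimizing $\rho$ against the spatial scale $r_i$ and summing the geometric-type series over the $d-l$ coordinates that still carry exponent $q$ produces exactly the increment $\lambda_l$; the normalization by $(1+\alpha^{-1})^i$ in the denominator of $\lambda_l$ reflects that the coordinates with the larger separations (indices $d-l+1,\dots,d$, already at exponent $1+\alpha^{-1}$) are used as a ``reservoir'' whose contribution compounds multiplicatively. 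The constraint $q < \alpha^{-1}$ enters to ensure that $q + \lambda_l \le 1 + \alpha^{-1}$, i.e., that we do not overshoot the target exponent and can legitimately keep invoking \autoref{lem:symmetry}, which requires $q \in [0, 1+\alpha^{-1}]$.

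The main obstacle, and the reason this is genuinely harder than the isotropic argument of \cite{BGK09}, is bookkeeping the anisotropy: the jumping measure $\nu$ lives on the coordinate axes, so a move from $x$ to $y$ must be decomposed coordinate by coordinate, and the intermediate points $z$ one integrates over are unconstrained in the ``bad'' coordinates while one needs sharp control in the ``good'' ones. Making the close-to-$x$ / close-to-$y$ dichotomy work simultaneously in all $d-l$ coordinates, while not losing the $(1+\alpha^{-1})$-exponents already secured in the remaining $l$ coordinates, is where the careful choice of $\lambda_l$ and the ordering of coordinates (hence \autoref{lem:symmetry}) are essential. I would therefore structure the proof so that the heavy analytic estimate is isolated into the auxiliary \autoref{prop:main} (cf.\ the paper's organization), and \autoref{lem:Gl} follows by feeding $\HHq{q}{l}$ into that proposition and reading off the improved exponent.
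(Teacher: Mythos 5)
Your high-level observation is correct: the paper isolates the analytic work in \autoref{prop:main} and derives \autoref{lem:Gl} by feeding $\HHq{q}{l}$ into it. But the mechanism you describe is not the one the paper uses, and parts of it would in fact not work here.

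First, the decomposition is not a Chapman--Kolmogorov midpoint split $p_t(x,y)=\int p_{t/2}(x,z)p_{t/2}(z,y)\,\d z$, and it is not a Meyer-type small/large jump split either. The paper explicitly flags in the introduction that Meyer's decomposition is an isotropic tool and ``for heat kernel estimates in a non-isotropic setting, there is no useful method known so far.'' What it actually uses is the first-exit-time decomposition of \cite{BGK09} (quoted as \autoref{lem:LBGK}): $(P_tf,g)$ is bounded by two terms of the form $\big(\E^{\cdot}[\1_{\{\tau\le t/2\}}P_{t-\tau}f(X_\tau)],g\big)$ with $\tau$ the exit time from a ball $B(x_0,R_{j_0}/8)$. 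Inside \autoref{prop:main}, one then tracks where the process lands at time $\tau$ by decomposing $\R^d$ into the anisotropic boxes $A_k^{e_i}$, uses the L\'evy system formula to price each landing region (estimates \eqref{eq:LSIg0}, \eqref{eq:LSIg}, \eqref{eq:Plg}), and applies $\HHq{q}{l}'$ to $p_{t-\tau}(z,y)$ on each region. This is the ``careful tracking of $X_\tau$'' machinery, not a midpoint convolution.

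Second, the increment $\lambda_l$ does not come from ``optimizing $\rho$ against the spatial scale $r_i$.'' In fact $\rho=t^{1/\alpha}$ is fixed throughout. \autoref{prop:main} is applied not once but for every choice of exit radius index $j_0 \in\{i_0,\ldots,d-l\}$, producing a family of bounds $G_{j_0}(l)$ each carrying the extra factor $(t/|x_0^{j_0}-y_0^{j_0}|^\alpha)^{1+q}$. The paper then takes $G_{i_0}(l)\wedge\cdots\wedge G_{d-l}(l)$ and runs a combinatorial case analysis on the relative sizes of $\theta_{i_0},\ldots,\theta_{d-l}$ (the dyadic magnitudes of the coordinate separations). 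Either some $j_0<d-l$ already gives a uniform gain of $\lambda_l^{i_0}$ across all coordinates $i_0,\ldots,d-l$ (display \eqref{eq:psa1}), or, if no such $j_0$ helps, a chain of implications involving powers $(1+q)^i$ forces $\theta_{d-l}$ to dominate in such a way that $j_0=d-l$ works (display \eqref{eq:psad1}). It is precisely that chain that produces the sum $\sum_{i=1}^{d-l-i_0}(1+\alpha^{-1})^i$ in the definition of $\lambda_l^{i_0}$; so the $(1+\alpha^{-1})^i$ weights come from the recursion on the $\theta_j$'s in the ``bad'' coordinates, not from the $l$ already-resolved coordinates acting as a reservoir. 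Finally, the constraint $q<\alpha^{-1}$ matters primarily because it selects the case in \autoref{prop:main} where the output is $(t/R_{j_0}^\alpha)^{1+q}$ rather than $(t/R_{j_0}^\alpha)^{1+\alpha^{-1}}$ (the dichotomy comes from the convergence behaviour of a geometric sum in the estimate of term $\I$ there), not merely to prevent overshooting $1+\alpha^{-1}$; the overall scheme is designed so that once $q$ exceeds $\alpha^{-1}$ one switches to \autoref{lem:Fl} instead.
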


\begin{lemma}\label{lem:Fl}
Assume condition $\HHq{q}{l}$ holds true for some $l\in\{0, \ldots, d-2\}$ and  $q>\alpha^{-1}$. Then $\HHq{0}{l+1}$ holds true.
\end{lemma}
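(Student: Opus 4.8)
The plan is to prove \autoref{lem:Fl} by a self-improvement argument that converts a strong \emph{diagonal-in-the-last-$l$-coordinates} estimate into a fresh off-diagonal gain in the $(d-l)$-th coordinate. The key analytic input is the Chapman–Kolmogorov / semigroup identity $p_{t}(x,y) = \int_{\R^{d}} p_{t/2}(x,z)\, p_{t/2}(z,y)\, \d z$, combined with the fact (via \autoref{lem:symmetry}) that $\HHq{q}{l}$ holds for all orderings of the coordinates simultaneously. First I would reduce to the regime where the new factor is genuinely small, i.e. $|x^{d-l}-y^{d-l}|^{\alpha} \gg t$; otherwise $\big(\tfrac{t}{|x^{d-l}-y^{d-l}|^{\alpha}}\wedge 1\big) = 1$ and $\HHq{0}{l+1}$ follows trivially from $\HHq{q}{l}$ (dropping the $q$-power, which is legitimate since the factor is $\le 1$). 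So assume the $(d-l)$-th coordinate separation is large compared with $t^{1/\alpha}$.

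\medskip

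The heart of the matter is to exploit that the process $X$ has a jump kernel supported on lines parallel to the coordinate axes. To move the $(d-l)$-th coordinate by a distance $r := |x^{d-l}-y^{d-l}|$, the process must perform a jump (or a sequence of jumps) in the $(d-l)$-direction. I would split $p_{t}(x,y)$ into a contribution where all jumps in the $(d-l)$-direction up to time $t/2$ are of size $\le r/2$, and the complementary event. On the first event, a Meyer-type / exit-time estimate shows the $(d-l)$-coordinate cannot have travelled distance $r$ unless it accumulates many small jumps, which costs a factor comparable to $(t / r^{\alpha})^{\text{(large power)}}$ — here the one-dimensional structure of the $(d-l)$-th component, a truncated $\alpha$-stable-like process, is used, together with standard tail bounds for the number of small jumps. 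On the complementary event, there is at least one large jump of size $\gtrsim r$ in the $(d-l)$-direction at some random time; conditioning on that jump, applying \eqref{eq:J-ellipticity-assum} to bound its intensity by $\Lambda r^{-1-\alpha}\,\d\tau$, and using $\HHq{q}{l}$ for the two pieces of trajectory before and after the jump (with the coordinates suitably re-ordered, which \autoref{lem:symmetry} permits), one integrates in the jump time and jump location. The single large jump produces exactly one factor $r^{-1-\alpha}$, i.e. $(t/r^{\alpha})^{1+\alpha^{-1}} \cdot t^{-1/\alpha}$ up to the diagonal prefactor, and the hypothesis $q > \alpha^{-1}$ is what guarantees the leftover powers from the two trajectory pieces are summable/integrable without destroying the target exponent $1+\alpha^{-1}$ in the coordinates $d-l+1, \ldots, d$ and leave a harmless bounded factor in coordinates $1,\ldots,d-l$. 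Assembling the two contributions and absorbing constants gives $\HHq{0}{l+1}$.

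\medskip

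I expect the main obstacle to be the careful bookkeeping of the non-isotropy: after the large jump in the $(d-l)$-th coordinate, the ordering $|x^{1}-y^{1}| \le \cdots \le |x^{d}-y^{d}|$ assumed in $\HHq{q}{l}$ need not be inherited by the sub-trajectories, so one must invoke \autoref{lem:symmetry} to apply the hypothesis to the permuted coordinates, and then track which coordinates pick up the weak $q$-power versus the strong $1+\alpha^{-1}$-power. A secondary technical point is making the "many small jumps are needed" estimate quantitative enough: one needs the small-jump contribution to be bounded by $C (t/r^{\alpha})^{1+\alpha^{-1}}$ times the diagonal term, which requires a large-deviation bound for a compound-Poisson-type sum with the truncated $\alpha$-stable intensity; since $q$ can be taken as large as we like in the eventual iteration (only $q > \alpha^{-1}$ is needed here), and since $1+\alpha^{-1}$ is finite, this should follow from a routine exponential Chebyshev argument, but the constants must be shown to depend only on $\Lambda, d, \alpha, q$, consistently with \autoref{rem:constant-Hlq}. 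Everything else — the Chapman–Kolmogorov splitting, dropping $\le 1$ factors, integrating $\d\tau$ over $(0,t)$ — is bookkeeping.
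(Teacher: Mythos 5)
Your plan is a genuinely different route from the paper's. The paper does \emph{not} use Chapman--Kolmogorov combined with a big-jump/small-jump splitting; instead it first proves the technical Proposition \ref{prop:main}, which bounds the exit-time quantity $\E^{x}\big[\1_{\{\tau\le t/2\}}P_{t-\tau}f(X_{\tau})\big]$ for $\tau=\tau_{B(x_0,R_{j_0}/8)}$ via a decomposition of space into the boxes $A_k$ and the L\'evy system formula, and then derives Lemma \ref{lem:Fl} almost immediately by choosing $j_0=d-l$ and combining with the BGK duality Lemma \ref{lem:LBGK} (the $(P_tf,g)$-inequality). So \ref{lem:Fl} is a thin corollary of the paper's actual workhorse, not a place where new probabilistic estimates are done.

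Your Meyer-type decomposition has a concrete gap in the small-jump part. You propose to truncate jumps in the $(d-l)$-direction at level $r/2$ with $r=|x^{d-l}-y^{d-l}|$. But the $(d-l)$-coordinate can travel a distance $\approx r$ with just \emph{two} jumps of size close to $r/2$, and by the L\'evy system this has probability of order $(t/r^{\alpha})^{2}$. When $\alpha<1$ one has $2<1+\alpha^{-1}$, so $(t/r^\alpha)^2$ decays too slowly and cannot be absorbed into the target factor $(t/r^{\alpha})^{1+\alpha^{-1}}$. To fix this you would need to truncate at $r/N$ with $N$ chosen (depending on $\alpha$) so that at least $N> 1+\alpha^{-1}$ small jumps are required; your appeal to "routine exponential Chebyshev" does not by itself handle this scale selection, and the constants must still be tracked in terms of $(\Lambda,d,\alpha,q)$ only, per Remark \ref{rem:constant-Hlq}. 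A further caution: once you truncate only the $(d-l)$-jumps, the resulting process is still fully anisotropic with unbounded jumps in the remaining coordinates, and its transition density is \emph{not} among the objects you have bounds for --- the induction hypothesis $\HHq{q}{l}$ applies to $p_t$, not to the truncated semigroup. This is precisely the obstruction the paper alludes to when it states that Meyer's decomposition "works well" only in the isotropic case, and it is the reason the paper replaces Meyer's splitting by exit times from balls plus the $A_k$-decomposition. The big-jump half of your argument (sum over all large jumps via the L\'evy system, bound $p_s(x,z)$ and $p_{t-s}(z+\tau e_{d-l},y)$ by $\HHq{q}{l}$ after applying Lemma \ref{lem:symmetry}, and observe that $q>\alpha^{-1}$ makes $\int_{\R}\big(1\wedge |u|^{-\alpha}\big)^{q}\,\d u<\infty$ so the $z^{d-l}$-integral converges) is heuristically sound and does produce the factor $t^{-d/\alpha}(t/r^\alpha)^{1+\alpha^{-1}}$, but it is the small-jump half that, as written, fails.
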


\begin{lemma}\label{lem:GF-dminus1}	{\ } \\
	(i) Assume condition $\HHq{q}{d-1}$ holds true for some $q<\alpha^{-1}$. Then 
	$\HHq{q+1}{d-1}$ holds true. \\
	(ii) Assume condition $\HHq{q}{d-1}$ holds true for some $q>\alpha^{-1}$. Then $\HHq{1+\alpha^{-1}}{d-1}$ holds true.
\end{lemma}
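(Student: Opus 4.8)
The plan is to run, at the top level $l=d-1$, the same self-improvement mechanism that yields \autoref{lem:Gl} and \autoref{lem:Fl} at the lower levels, i.e.\ to invoke the master estimate \autoref{prop:main} with $l=d-1$; the only feature distinguishing this level is that \emph{all} coordinates except the first already carry the sharp exponent $1+\alpha^{-1}$, which is exactly why the increment in~(i) is a full unit ($\lambda_{d-1}=1$) and why the output of~(ii) is the final bound. By \autoref{lem:symmetry} it suffices to treat $x,y$ with $R_1:=|x^1-y^1|\le\dots\le R_d:=|x^d-y^d|$, and if $t\ge R_1^\alpha$ then $\tfrac{t}{R_1^\alpha}\wedge1=1$ and both claims follow at once from $\HHq{q}{d-1}$, so I assume $t<R_1^\alpha$ (hence $t<R_i^\alpha$ for every $i$). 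Throughout one uses the a~priori bound in the reordered form provided by \autoref{lem:symmetry}, valid for \emph{all} $u,v\in\R^d$, which places the weak exponent $q$ on coordinate~$1$:
\begin{align}\label{eq:apriori-reord}
p_s(u,v)\le C\,s^{-d/\alpha}\Big(\tfrac{s}{|u^1-v^1|^\alpha}\wedge1\Big)^q\prod_{i=2}^d\Big(\tfrac{s}{|u^i-v^i|^\alpha}\wedge1\Big)^{1+\alpha^{-1}}.
\end{align}

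The mechanism is ``one big jump in coordinate~$1$''. Let $U:=\{z:|z^1-x^1|<R_1/4\}$, let $p^U$ be the transition density of $X$ killed on exiting $U$, and note $y\notin\overline{U}$. Since a jump of $X$ out of $U$ must be in the $e^1$-direction, the L\'{e}vy system (Ikeda--Watanabe formula) at the first exit time $\tau_U$ of $U$ gives
\begin{align}\label{eq:IW}
p_t(x,y)=\int_0^t\!\!\int_U\!\!\int_{\R}p^U_s(x,z)\,J(z,z+he^1)\,\1_{z+he^1\notin U}\,p_{t-s}(z+he^1,y)\,\d h\,\d z\,\d s,
\end{align}
with $J(z,z+he^1)\le\Lambda|h|^{-1-\alpha}$. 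One inserts \eqref{eq:apriori-reord} for $p^U_s(x,z)\le p_s(x,z)$ and for $p_{t-s}(z+he^1,y)$ and uses that $z$ and $z+he^1$ agree in the coordinates $2,\dots,d$; since those $d-1$ coordinates already carry the sharp exponent and the integrand factorizes in them, they can be integrated out by the convolution stability (Chapman--Kolmogorov) of the one-dimensional $\alpha$-stable kernel bound, which reproduces $\prod_{i=2}^d(t/R_i^\alpha)^{1+\alpha^{-1}}$ up to a constant. What is left is a one-dimensional estimate for the variables $z^1\in(x^1-R_1/4,x^1+R_1/4)$ and $w^1:=z^1+h$ with $|w^1-x^1|\ge R_1/4$.

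This one-dimensional estimate produces the dichotomy at $q=\alpha^{-1}$. When $z^1$ lies within $t^{1/\alpha}$ of $x^1$ the pre-jump factor in \eqref{eq:apriori-reord} is $\asymp1$, and a jump $z^1\to w^1$ landing at distance $\delta:=|w^1-y^1|$ of $y^1$ contributes, through $J\le\Lambda|h|^{-1-\alpha}\asymp\Lambda R_1^{-1-\alpha}$ and the post-jump factor $\big(\tfrac{t}{\delta^\alpha}\wedge1\big)^q$, an amount $\asymp R_1^{-1-\alpha}\big(\tfrac{t}{\delta^\alpha}\wedge1\big)^q$. The integral $\int_0^{R_1}\big(\tfrac{t}{\delta^\alpha}\wedge1\big)^q\,\d\delta$ is governed by its endpoints: for $\alpha q<1$ (case~(i)) it is of order $t^qR_1^{1-\alpha q}$, coming from $\delta\asymp R_1$ — the jump need only reach the vicinity of $y^1$ — which yields the extra factor $\tfrac{t}{R_1^\alpha}\cdot t^qR_1^{-\alpha q}=(t/R_1^\alpha)^{q+1}$ and hence $\HHq{q+1}{d-1}$; for $\alpha q>1$ (case~(ii)) it is of order $t^{1/\alpha}$, coming from $\delta\asymp t^{1/\alpha}$ — the jump essentially reaches $y^1$ — which yields $(t/R_1^\alpha)^{1+\alpha^{-1}}$ and hence $\HHq{1+\alpha^{-1}}{d-1}$. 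Together with the remaining elementary integrations (in $s$, and over $z^1$ near $x^1$) this is the whole of~(i) and~(ii).

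The main obstacle — and the reason this is more than calculus — is the contribution of the ``barely-out'' jumps, i.e.\ of $w^1$ with $|w^1-x^1|$ close to $R_1/4$: there $|w^1-z^1|^{-1-\alpha}$ is non-integrable at $\partial U$ for $\alpha\ge1$, and, for $q<\alpha^{-1}$, the reordered a~priori bound \eqref{eq:apriori-reord} has too heavy a tail in coordinate~$1$ to control, by any crude estimate, either the pre-jump density $\int p^U_s(x,z)\,\d z^1$ or the post-jump factor $p_{t-s}(w,y)$. The remedy — which is the substance of \autoref{prop:main} and is where the hypotheses $q\lessgtr\alpha^{-1}$ are actually consumed — is to retain the Dirichlet kernel $p^U_s$ together with its boundary decay $p^U_s(x,z)\lesssim\big(1\wedge\operatorname{dist}(z,\partial U)/s^{1/\alpha}\big)^{\alpha/2}p_s(x,z)$; equivalently, to feed back the exit estimate $\Pp_x(\tau_U\le t)\lesssim tR_1^{-\alpha}$, obtained by splitting off the direction-$1$ jumps of size $\ge R_1/8$ — whose rate is $\asymp R_1^{-\alpha}$, uniformly in the position — and bounding the truncated remainder by a Doob/second-moment inequality. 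Here the Dirichlet exponent $\alpha/2<1$ is exactly what restores integrability against $|w^1-z^1|^{-1-\alpha}$, and the factor $tR_1^{-\alpha}$ is the source of the ``$+1$''. I expect this step — handling the boundary decay at $\partial U$, the heavy coordinate-$1$ tail, and the two ranges $q\lessgtr\alpha^{-1}$ simultaneously — to be the only genuinely delicate point; everything else is convolution calculus with the one-dimensional stable bound.
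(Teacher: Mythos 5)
Your proposal follows a genuinely different route from the paper, and it has a real gap at the step you yourself flag as delicate. The paper does \emph{not} use the Ikeda--Watanabe first-exit decomposition from a slab with the killed kernel $p^U_s$: instead it applies the bilinear inequality of \autoref{lem:LBGK} (to symmetrize and restrict to $\tau\le t/2$) and then \autoref{prop:main}, which bounds $\E^x[\1_{\{\tau\le t/2\}}P_{t-\tau}f(X_\tau)]$ by tracking the \emph{landing} position $X_\tau$ across the dyadic boxes $A_k$ centered at $y_0$. For landings far from $y_0$ (which includes every ``barely out'' jump, since those land near $\partial B(x_0,s(j_0))$, far from $y_0$), the paper uses only the crude exit-time bound \eqref{eq:Plg} \emph{and} the a priori decay of $P_{t-\tau}f(z)$ in the distance $|z-y_0|$; for landings near $y_0$ it uses the L\'evy-system bound \eqref{eq:LSIg0}, where $|X^i_{s}-\ell|\gtrsim R_i$ keeps the jump kernel bounded. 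In this way the boundary singularity of $|h|^{-1-\alpha}$ never appears.

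The gap in your argument is precisely the ``remedy'' you propose for the barely-out jumps. The boundary-decay estimate $p^U_s(x,z)\lesssim\bigl(1\wedge\operatorname{dist}(z,\partial U)/s^{1/\alpha}\bigr)^{\alpha/2}p_s(x,z)$ is \emph{not} available in this singular anisotropic setting --- it is a substantially stronger statement than anything established in the paper, and proving it would itself be a serious undertaking (note that $X^1$ alone is not Markov, so one cannot reduce to a known one-dimensional Dirichlet heat kernel bound). More importantly, your claim that this is ``equivalently'' the exit-time estimate $\Pp^x(\tau_U\le t)\lesssim tR_1^{-\alpha}$ is incorrect: the exit-time estimate is an integrated statement and does not control the pointwise decay of $p^U_s(x,z)$ as $z\to\partial U$, which is exactly what you need in order to integrate against the non-integrable singularity $|h|^{-1-\alpha}$ at $h\to(\operatorname{dist}(z,\partial U))^+$ when $\alpha\ge1$. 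Without the boundary decay, the inner $h$-integral in your \eqref{eq:IW} produces a factor $\asymp\operatorname{dist}(z,\partial U)^{-\alpha}$ that cannot be absorbed by $p^U_s\le p_s$ alone, and the argument breaks. The paper's decomposition into $A_k$ is precisely the device that sidesteps this: it converts the would-be boundary singularity in $z$ into a summable tail in $k$, controlled by the decay of $P_{t-\tau}f$.

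Your calculation of the dichotomy at $q=\alpha^{-1}$ via $\int_0^{R_1}(t/\delta^\alpha\wedge1)^q\,\d\delta$ does correctly identify why the gain is $+1$ below threshold and saturates at $1+\alpha^{-1}$ above it, and this is the same arithmetic that appears in the case analysis (e.g.\ \eqref{eq:s1g}, \eqref{eq:s01g4}) of the paper's proof of \autoref{prop:main}. But without a correct treatment of the boundary the proof does not close.
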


Note that the case $q = \alpha^{-1}$ is left open here. In this case one can apply \autoref{rem:H-fundamentals} (2) and obtain the corresponding conclusion for any $\widetilde{q} < \alpha^{-1}$. Assertion (i) of \autoref{lem:GF-dminus1} and assertion of \autoref{lem:Gl} can be seen as one implication $\HHq{q}{l} \Rightarrow \HHq{q+\lambda_l}{l}$ being true for every $l\in\{0, \ldots, d-1\}$. However, we decide to split the assertion into two cases. As we will see, the proof of \autoref{lem:GF-dminus1} is much simpler than the one of \autoref{lem:Gl}. However, both rely on our main technical result, \autoref{prop:main}. 

\medskip

Altogether we have shown that \autoref{thm:main} follows once we have established \autoref{lem:Gl}, \autoref{lem:Fl} and \autoref{lem:GF-dminus1}.

\subsection{Auxiliary results}\label{sec:auxiliary}

In this subsection we provide several auxiliary results.

\medskip

Let us explain the connection between the kernel $J$ and the corresponding stochastic process $X$. The function $J$ is called the jumping kernel of $X$ and describes the intensity of jumps of the process $X$. The formal relation is given by the following L\'evy system formula, which can be found in \cite[Appendix A]{ChKu08}.
 
\begin{lemma}
For any $x\in \R$, stopping time $S$ (with respect to the filtration of $X$), and non-negative measurable function $f$ on $\R_+ \times \R\times \R$ with $f(s, y, y)=0$ for all $y\in\R$ and $s\ge 0$, we have 
	\begin{equation}\label{eq:LSd}
	\E^x \left[\sum_{s\le S} f(s,X_{s-}, X_s) \right] = \E^x \left[ \int_0^S \left(\sum_{i=1}^{d}\int_{\R} 
	f(s,X_s, X_s+e_ih) 
	J(X_s,X_s+e_ih) dh \right) ds \right].
	\end{equation}
\end{lemma}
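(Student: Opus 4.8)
The statement is the (time-inhomogeneous) L\'evy system formula for the Hunt process $X$, and since $X$ is a symmetric Hunt process associated with the regular Dirichlet form $(\mathcal{E}, D)$, the plan is to invoke the general L\'evy system theorem from the theory of Dirichlet forms and then identify the characteristics in our concrete situation. First I would recall the abstract result (see \cite{ChKu08} and the references therein): there is a L\'evy system $(N, H)$ for $X$, consisting of a kernel $N(x, \d y)$ on $\Rd$ with $N(x, \{x\}) = 0$ together with a positive continuous additive functional $H$, such that for every $x$, every bounded stopping time $S$ with respect to the filtration of $X$, and every non-negative Borel function $g$ on $\Rd \times \Rd$ vanishing on the diagonal,
\begin{align*}
\E^x\Big[\sum_{s\le S} g(X_{s-}, X_s)\Big] = \E^x\Big[\int_0^S \Big(\int_{\Rd} g(X_s, y)\, N(X_s, \d y)\Big)\d H_s\Big].
\end{align*}

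Next I would compute $(N, H)$ from the Beurling--Deny representation of $\mathcal{E}$. Reading off the jumping measure from the expression defining $\mathcal{E}(u,v)$, one sees it is concentrated on the union of the axis-parallel sheets $\{(x,y) : y - x \in \R e_i\}$, $i = 1, \dots, d$, and that on the $i$-th sheet it equals a fixed constant multiple of $J(x, x + e_i\tau)\,\d\tau\,\d x$ in the coordinates $(x,\tau)$ with $y = x + e_i\tau$. Because the symmetrizing (reference) measure of $X$ is $d$-dimensional Lebesgue measure and this is precisely the outer measure $\d x$ that already appears in $\mathcal{E}$, the Revuz measure of $H$ is Lebesgue measure; hence one may take $H_s = s$, and then, after absorbing the symmetrization constant into the convention,
\begin{align*}
N(x, \d y) = \sum_{i=1}^d J\big(x, x + e_i(y^i - x^i)\big)\, \d y^i \prod_{j \ne i}\delta_{\{x^j\}}(\d y^j),
\end{align*}
understood as a measure on $\Rd \setminus \{x\}$. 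Substituting $H_s = s$ and this $N$ into the abstract formula, and unfolding the one-dimensional integral along each coordinate axis, turns $\int_{\Rd} g(X_s, y)\, N(X_s, \d y)$ into $\sum_{i=1}^d \int_\R g(X_s, X_s + e_i h)\, J(X_s, X_s + e_i h)\, \d h$, which is exactly the time-homogeneous version of \eqref{eq:LSd}.

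It remains to carry out two routine extensions. To drop the boundedness assumption on $S$, apply the identity to $S \wedge n$ and let $n \to \infty$; since $f \ge 0$, both sides are non-decreasing in $n$, so monotone convergence gives the claim with both sides valued in $[0, \infty]$. To incorporate the explicit time dependence in $f(s,x,y)$, apply the time-homogeneous L\'evy system to the space--time process $\widehat X_s = (s, X_s)$, whose L\'evy kernel is $\delta_{\{u\}}(\d v)\,N(x, \d y)$ and whose additive functional is again $s$; alternatively, establish the identity first for product functions $f(s,x,y) = \varphi(s)\,g(x,y)$ and then pass to general non-negative measurable $f$ by a monotone class argument.

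The only genuinely delicate point in this scheme is the identification $H_s = s$, that is, the claim that no time change is hidden in the L\'evy system: this amounts to checking that the Revuz measure of the additive functional equals the symmetrizing measure $\d x$, which in turn follows from $(\mathcal{E}, D)$ being written in standard form with reference measure $\d x$ and with jumping measure absolutely continuous, along each sheet, with respect to it. Bookkeeping the lower-dimensional $\delta$-factors and the symmetrization constant when reading $N$ off $\mathcal{E}$ also demands a little attention, but is mechanical. As all of this is standard for regular Dirichlet forms, the paper simply cites \cite{ChKu08}.
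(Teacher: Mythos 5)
Your outline is correct and matches the paper's treatment: the paper gives no proof here but simply cites \cite[Appendix A]{ChKu08}, which is exactly the abstract L\'evy system theorem for symmetric Hunt processes that you invoke, with the identification of $(N,H)$ from the Beurling--Deny jumping measure and the routine extensions to unbounded $S$ and time-dependent $f$ being the content of that citation. Your note about tracking the symmetrization constant is the right caveat, and you correctly observe that the paper leaves all of this to the reference.
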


Next, let us introduce some results which will be used in the proofs . 
	
\begin{proposition}\cite[Proposition 3.12 (b)]{Xu13}\label{prop:xu1}
	There is a positive constant $C$ such that
		\begin{align*}
		p_t( x, y)\le C t^{-d/\alpha}
		\end{align*}
		for all $t>0$ and $x, y\in \Rd$. Moreover, $p$ is a continuous function in $t,x,y$.
\end{proposition}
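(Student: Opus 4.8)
The plan is to obtain this on‑diagonal bound — which is recorded here as the cited estimate \cite[Proposition 3.12 (b)]{Xu13} — from a Nash inequality for $(\mathcal{E},D)$, transferred from the model process $Z$ via the comparability \eqref{eq:J-ellipticity-assum}, and then to convert it into the stated pointwise bound using the Carlen--Kusuoka--Stroock equivalence together with the known continuity of $p_t$.

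First I would record the on‑diagonal bound for the model. Since the coordinates of $Z$ are independent one‑dimensional symmetric $\alpha$-stable processes, its transition density factorizes, $q^Z_t(x,y)=\prod_{i=1}^d q^{(1)}_t(x^i,y^i)$, and each one‑dimensional $\alpha$-stable density satisfies $q^{(1)}_t(s,s')\le c\,t^{-1/\alpha}$; hence $q^Z_t(x,y)\le c^d\,t^{-d/\alpha}$, i.e.\ $\|P^Z_t\|_{L^1\to L^\infty}\le c^d t^{-d/\alpha}$. By the Carlen--Kusuoka--Stroock theorem this is equivalent to the Nash inequality
\begin{align*}
\|u\|_2^{2+2\alpha/d}\ \le\ C_N\,\mathcal{E}^\alpha(u,u)\,\|u\|_1^{2\alpha/d}\qquad (u\in C_c^1(\R^d)).
\end{align*}
(Alternatively one may argue on the Fourier side: since $t\mapsto t^{\alpha/2}$ is subadditive on $[0,\infty)$ for $\alpha\in(0,2)$, one has $\sum_{i=1}^d|\xi^i|^\alpha\ge|\xi|^\alpha$, so $\mathcal{E}^\alpha$ dominates the Dirichlet form of the isotropic fractional Laplacian up to a constant, and one quotes the classical Nash inequality for the latter.)

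Next, \eqref{eq:J-ellipticity-assum} gives $\mathcal{E}(u,u)\ge\Lambda^{-1}\mathcal{E}^\alpha(u,u)$, so the displayed Nash inequality holds verbatim with $\mathcal{E}$ in place of $\mathcal{E}^\alpha$ and constant $\Lambda C_N$. Applying the Carlen--Kusuoka--Stroock equivalence in the converse direction yields $\|P_t\|_{L^1\to L^\infty}\le C t^{-d/\alpha}$, which means precisely $p_t(x,y)\le C t^{-d/\alpha}$ for $(\d x\otimes\d y)$-a.e.\ $(x,y)$ and all $t>0$.

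Finally I would upgrade this to a genuine pointwise bound. Since $p_t$ admits a jointly continuous version on $(0,\infty)\times\R^d\times\R^d$ — already recorded in \autoref{sec:prelim}, see \cite{Xu13} — and the right‑hand side is continuous, the a.e.\ inequality extends to all $(t,x,y)$; if one prefers not to cite continuity, it follows from the representation $p_t(x,y)=\int_{\R^d} p_{t/2}(x,z)p_{t/2}(z,y)\,\d z$ together with $\int_{\R^d} p_{t/2}(x,z)^2\,\d z=p_t(x,x)<\infty$ (so $p_{t/2}(x,\cdot)\in L^2(\R^d)$) and the parabolic H\"older estimate for $\mathcal{E}$-caloric functions valid under \eqref{eq:J-ellipticity-assum}, cf.\ \cite{KaSc14}. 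I do not expect a serious obstacle here: the only input specific to the present singular setting is the elementary comparison $\mathcal{E}(u,u)\ge\Lambda^{-1}\mathcal{E}^\alpha(u,u)\gtrsim\int_{\R^d}|\xi|^\alpha|\widehat{u}(\xi)|^2\,\d\xi$, everything else being the standard Nash–Carlen--Kusuoka--Stroock machinery.
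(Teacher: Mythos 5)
Your proposal is correct, and it is the standard route to such an on-diagonal bound. The paper itself does not prove \autoref{prop:xu1} but simply cites \cite[Proposition 3.12 (b)]{Xu13}; the Nash inequality / Carlen--Kusuoka--Stroock argument you give is the canonical way to obtain it and is, to the best of my knowledge, what is done in that reference. Your Fourier-side shortcut is clean: the subadditivity of $t\mapsto t^{\alpha/2}$ on $[0,\infty)$ gives $\sum_{i=1}^d|\xi^i|^\alpha\ge\bigl(\sum_{i=1}^d|\xi^i|^2\bigr)^{\alpha/2}=|\xi|^\alpha$, so
\begin{align*}
\mathcal{E}(u,u)\ \ge\ \Lambda^{-1}\mathcal{E}^\alpha(u,u)\ =\ c_\alpha\Lambda^{-1}\int_{\R^d}\Big(\sum_{i=1}^d|\xi^i|^\alpha\Big)|\widehat{u}(\xi)|^2\,\d\xi\ \ge\ c_\alpha\Lambda^{-1}\int_{\R^d}|\xi|^\alpha|\widehat{u}(\xi)|^2\,\d\xi\,,
\end{align*}
and the classical Nash inequality for $(-\Delta)^{\alpha/2}$ transfers with constant $\Lambda C_N$. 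The exponent bookkeeping $\|u\|_2^{2+2\alpha/d}\le C\,\mathcal{E}(u,u)\,\|u\|_1^{2\alpha/d}$ matches $\|P_t\|_{L^1\to L^\infty}\le Ct^{-d/\alpha}$. The upgrade from an a.e.\ to a pointwise bound is also fine: either via the continuity of $p_t$ recorded in \autoref{sec:prelim} (from \cite{Xu13}), or via the Chapman--Kolmogorov representation plus the parabolic H\"older estimate of \cite{KaSc14}. No gap.
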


For any open set $U\subset \R^d$, 
let $\tau_U:= \inf\{t>0: X_t\notin U\}$ be 
the {\it first exit time} of the process $X$ from $U$.  
	
\begin{proposition}\cite[Proposition 4.4]{Xu13}\label{prop:xu2}
	There is a positive constant $C$ such that
\begin{align*}
\Pp^x(\tau_{B(x, R)}< t)\le C t R^{-\alpha}
\end{align*}
for all $t>0$, $R>0$ and $x\in \Rd$.
\end{proposition}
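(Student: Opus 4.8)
The plan is to follow the standard route for stable-like processes: build a good cut-off function and apply Dynkin's formula. Write $x_0$ for the centre of the ball and set $\tau:=\tau_{B(x_0,R)}$. Since $\Pp^{x_0}(\tau<t)\le 1$ always, the claimed inequality is automatic once $tR^{-\alpha}$ exceeds a fixed constant, so I may assume $tR^{-\alpha}$ is as small as convenient.

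First I would fix a radial $\psi\in C_c^\infty(\R^d)$ with $\1_{B(0,1/2)}\le\psi\le\1_{B(0,1)}$ and put $\phi(z):=\psi\big((z-x_0)/R\big)$, so that $\phi\equiv 1$ on $B(x_0,R/2)$, $\supp\phi$ is a compact subset of $B(x_0,R)$, and $\|D^2\phi\|_\infty\le CR^{-2}$. Since $C_c^1(\R^d)$ is a core for $(\mathcal E,D)$, the function $\phi$ belongs to the $L^2$-domain of the generator of $X$, and an integration by parts in $\mathcal E$ represents $L\phi$ as
\[
L\phi(z)=\sum_{i=1}^d\int_{\R}\big(\phi(z+e^i\tau)-\phi(z)\big)\,\widetilde J_i(z,z+e^i\tau)\,\d\tau,\qquad \widetilde J_i(x,y):=J(x,y)+J(y,x),
\]
the $\tau$-integral being read as a principal value when $\alpha\ge1$. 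Splitting the integral at $|\tau|=R$, bounding the far part crudely by $2\|\phi\|_\infty$ against $\widetilde J_i\asymp|\tau|^{-1-\alpha}$, and using a second-order Taylor expansion of $\phi$ on the near part (the first-order term being removed, up to an acceptable error, by the principal value), the goal is the scale-invariant estimate
\[
\|L\phi\|_\infty\le C_1 R^{-\alpha},\qquad C_1=C_1(\Lambda,d,\alpha).
\]
I expect this to be the main obstacle: the jumping measure is singular with respect to Lebesgue measure and $\widetilde J_i(z,z+e^i\tau)$ need not be even in $\tau$, so for $\alpha\ge1$ one has to control the corresponding principal value — equivalently, the drift of $X$ — carefully. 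This is exactly what is done in \cite{Xu13}.

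Granted $\|L\phi\|_\infty\le C_1 R^{-\alpha}$, I would finish by Dynkin's formula at $\tau\wedge t$ (valid from every starting point since $p_t$ is continuous, see \autoref{prop:xu1}):
\[
\E^{x_0}\big[\phi(X_{\tau\wedge t})\big]=\phi(x_0)+\E^{x_0}\!\Big[\int_0^{\tau\wedge t}L\phi(X_s)\,\d s\Big]\ge 1-C_1 tR^{-\alpha}.
\]
On $\{\tau\le t\}$ one has $X_{\tau\wedge t}=X_\tau\notin B(x_0,R)$, hence outside $\supp\phi$, so $\phi(X_{\tau\wedge t})=0$ there; on $\{\tau>t\}$ one uses $\phi\le1$. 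Thus $\E^{x_0}[\phi(X_{\tau\wedge t})]\le\Pp^{x_0}(\tau>t)$, which rearranges to $\Pp^{x_0}(\tau<t)\le\Pp^{x_0}(\tau\le t)\le C_1 tR^{-\alpha}$. As an alternative I would argue probabilistically: by the L\'evy system \eqref{eq:LSd} jumps of size $\ge R$ occur at rate at most $\tfrac{2\Lambda d}{\alpha}R^{-\alpha}$, so no such jump occurs before time $t$ with probability at least $1-CtR^{-\alpha}$; and the process obtained by deleting those jumps has, in each coordinate, a drift of order $tR^{1-\alpha}$ (a principal-value drift when $\alpha\ge1$) and a martingale part of predictable quadratic variation at most $CtR^{2-\alpha}$, so Doob's and Chebyshev's inequalities keep it inside $B(x_0,R)$ up to time $t$ with probability at least $1-CtR^{-\alpha}$. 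Either way the non-trivial ingredient is the $\alpha$-order bound on $L\phi$ (respectively, the control of the small-jump drift for $\alpha\ge1$), which is the delicate point.
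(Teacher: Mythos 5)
The paper does not prove this statement; it simply cites \cite[Proposition 4.4]{Xu13}. Your Lyapunov/Dynkin route is a natural first attempt, but it contains a genuine gap precisely at the point you flag as ``the main obstacle'' --- and the gap is conceptual, not just a computation one would look up.

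The central claim you need is that $\phi$ lies in the domain of the $L^2$-generator with $\|L\phi\|_\infty\le C_1R^{-\alpha}$. Your justification --- ``since $C_c^1(\R^d)$ is a core for $(\mathcal{E},D)$, the function $\phi$ belongs to the $L^2$-domain of the generator'' --- is a non sequitur: being a core for the form means $\mathcal{E}_1$-dense in $D$, which says nothing about membership in the strictly smaller set $D(L)$ (compare $L=\Delta$ on $\R$: $C_c^1$ is a core for the $H^1$-form yet $C_c^1\not\subset H^2=D(L)$). Worse, for $\alpha\ge 1$ the bound itself is in doubt under the sole hypothesis \eqref{eq:J-ellipticity-assum}. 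After symmetrizing, the kernel $\widetilde J(z,z+e^i\tau)$ is symmetric in the pair $(z,z+e^i\tau)$ but need \emph{not} be even in $\tau$ for fixed $z$, and \eqref{eq:J-ellipticity-assum} allows the odd part $\widetilde J(z,z+e^i\tau)-\widetilde J(z,z-e^i\tau)$ to be of full size $\asymp|\tau|^{-1-\alpha}$. The linear Taylor term then contributes
\[
\partial_i\phi(z)\,\mathrm{p.v.}\!\int_{|\tau|<R}\tau\,\widetilde J(z,z+e^i\tau)\,\d\tau,
\]
whose absolute size is controlled only by $\int_0^R\tau^{-\alpha}\,\d\tau$, which diverges for $\alpha\ge1$. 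There is no oddness to rescue the principal value, so $L\phi$ need not be bounded --- indeed $\phi$ need not lie in $D(L)$ at all --- and Dynkin's formula in the form you invoke is not available. Your ``alternative probabilistic argument'' meets exactly the same wall: under \eqref{eq:J-ellipticity-assum} alone, the small-jump process need not have a finite coordinatewise drift for $\alpha\ge1$, so there is nothing for Doob and Chebyshev to act on.

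The route actually used for this kind of result (the present paper itself records that Xu's upper bounds rely on Davies' method) bypasses the drift problem entirely: truncate the Dirichlet form to jumps of size $\lesssim R$ (still a symmetric form), derive an exponential off-diagonal bound for the truncated heat kernel via Davies/Carlen--Kusuoka--Stroock, hence a super-linear bound on the truncated exit time, and then combine with a Meyer-type comparison that accounts for the removed large jumps occurring at rate $\lesssim R^{-\alpha}$. No pointwise estimate on $L\phi$ is ever required; everything is phrased through energies and heat kernels. That missing idea --- replacing the pointwise Lyapunov bound by Dirichlet-form estimates --- is the real content concealed behind your sentence ``$\|L\phi\|_\infty\le C_1R^{-\alpha}$, which is exactly what is done in \cite{Xu13}.''
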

	
For any non-negative Borel functions $f$ on $\Rd$ and for any $t>0$, $x\in \Rd$, let $\{P_t\}_{\{t\ge 0\}}$ be the transition semigroup of $X$ defined by
$$P_t f(x)=\E^x\left[f(X_t)\right]=\int_{\Rd} 
p_t(x, y)f(y)dy.$$ For any two non-negative measurable functions $f, g$ on $\Rd$, set $$(f, g)=\int _{\Rd}f(x)g(x)dx.$$

	\begin{lemma}\cite[Lemma 2.1]{BGK09}\label{lem:LBGK}
		Let $U$ and $V$ be two disjoint non-empty open subsets of $\Rd$ and $f, g$ be non-negative Borel functions on $\Rd$. 
		Let $\tau=\tau_U$ and $\tau^{'}=\tau_V$ be the first exit times from $U$ and $V$, respectively. Then, for all $a, b, t>0$ such that $a+b=t$, we have
		\begin{align}\label{eq:LBGK}
		\Big(P_tf, g\Big) \le \Big(\E^{x}\big[\1_{\{\tau\le a\}}P_{t-\tau}f(X_{\tau})\big], g\Big)+ \Big(\E^{x}\big[\1_{\{\tau^{'}\le b\}}P_{t-\tau^{'}}g(X_{\tau^{'}})\big], f\Big)\,.
		\end{align}
	\end{lemma}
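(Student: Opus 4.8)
\textit{Proof proposal.} The statement to be proved is \autoref{lem:LBGK}, the elementary semigroup inequality of Barlow--Grigor'yan--Kumagai. The plan is to use the strong Markov property together with the disjointness of $U$ and $V$, splitting the trajectory according to whether it exits $U$ before time $a$. Fix $a,b,t>0$ with $a+b=t$, and write $\tau=\tau_U$, $\tau'=\tau_V$. Since $f,g$ are non-negative, it suffices to dominate $P_tf(x)=\E^x[f(X_t)]$ pointwise (in $x$) by a sum of two terms and then integrate against $g$; but to get the symmetric form in the statement one should instead work with the bilinear pairing $(P_tf,g)$ directly, using self-adjointness of $P_t$ on $L^2(\R^d,dx)$ (the process $X$ is symmetric), i.e.\ $(P_tf,g)=(f,P_tg)$, and the semigroup property $(P_tf,g)=(P_{a}(P_bf),g)$ etc.

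First I would record the key localization: if $X_0=x\in U$ and $X_t\notin\{x\}$-far in the sense that $X_t$ lands at a point $y$, then to travel from the support-region relevant to $f$ to that relevant to $g$ the path must leave at least one of $U$ or $V$; more precisely, the event $\{\tau>a\}\cap\{\tau'>b\}$ forces $X_s\in U$ for $s\le a$ and $X_s\in V$ for $s\le b$, which is impossible if $x$ cannot be simultaneously "at time $0$ in $U$ as seen by $f$" and in $V$. The clean way: decompose $\1=\1_{\{\tau\le a\}}+\1_{\{\tau>a\}}$ inside $\E^x[f(X_t)]$. On $\{\tau\le a\}$, apply the strong Markov property at $\tau$: $\E^x[\1_{\{\tau\le a\}}f(X_t)]=\E^x[\1_{\{\tau\le a\}}P_{t-\tau}f(X_\tau)]$, which is the first term. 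For the complementary term $\E^x[\1_{\{\tau>a\}}f(X_t)]$, integrate against $g(x)\,dx$ over $x\in\R^d$; using symmetry of $p_t$ rewrite this pairing with the roles of $f$ and $g$ interchanged and time run backwards, so that the constraint $\{\tau_U>a\}$ on the $f$-side becomes, after reversal, the statement that the $g$-started path has not yet reached the $f$-region, which (because $U,V$ are disjoint and the $g$-path starts in the $V$-relevant region) is controlled by $\{\tau_V\le b\}$; then apply the strong Markov property at $\tau'$ to get $\E^x[\1_{\{\tau'\le b\}}P_{t-\tau'}g(X_{\tau'})]$ paired with $f$. Summing the two contributions gives \eqref{eq:LBGK}.

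I expect the only real subtlety to be making the time-reversal / symmetry step precise: one must justify $(\1_{\{\tau>a\}}P_{a}^{U^c\text{-avoiding}}\text{-type term},\,g) \le (\E^\cdot[\1_{\{\tau'\le b\}}P_{t-\tau'}g(X_{\tau'})],f)$ cleanly. The cheapest rigorous route avoids reversal entirely: write $(P_tf,g)=(P_af,P_bg)$ by the semigroup property and symmetry, then on the set where the $P_af$-mass and $P_bg$-mass "meet" one of the two half-paths must have exited its set; formally, for $z\in\R^d$ one has $P_af(z)\,P_bg(z)\,dz$ contributions split by whether $z$ is reachable from the $f$-support within $U$ in time $a$ or from the $g$-support within $V$ in time $b$, and since $U\cap V=\emptyset$ at least one localized heat flow must have left its domain. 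Turning this heuristic into the stated inequality is a short computation with indicator functions and Fubini, and that bookkeeping — not any hard estimate — is the main thing to get right; there is no analytic obstacle, the lemma being purely a consequence of the Markov property, non-negativity, and symmetry of the transition density.
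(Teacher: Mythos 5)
The paper cites this lemma from \cite{BGK09} and gives no in-paper proof, so there is nothing to compare against; I assess your sketch on its own merits. Your first half is correct and complete: decompose on $\{\tau\le a\}$ versus $\{\tau>a\}$, and on the first event apply the strong Markov property at $\tau$ to produce $\E^x\big[\1_{\{\tau\le a\}}P_{t-\tau}f(X_\tau)\big]$. The treatment of the second term, however, has two concrete gaps that keep it from being a proof.

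First, the role of $U\cap V=\emptyset$ is misattributed. You repeatedly appeal to ``the $f$-region'' and ``the $g$-region'' and assert that ``the $g$-path starts in the $V$-relevant region,'' but the lemma imposes no support hypothesis on $f$ or $g$. The actual mechanism: on $\{\tau>a\}$ one has $X_a\in U$, so Markov at time $a$ turns the second piece into $\int g\cdot P_a^U(P_b f)\,\d x$ where $P^U$ is the killed semigroup. Symmetry of $P_a^U$ and of $P_b$ rewrites this as $\int f(y)\,P_b\big(P_a^U g\big)(y)\,\d y$. The geometric fact you then need is only that $P_a^U g$ vanishes off $U$, whence, starting from any $y$, the event $\big\{X_b\in \operatorname{supp}\!\big(P_a^U g\big)\big\}\subset\{X_b\in U\}\subset\{X_b\notin V\}\subset\{\tau'\le b\}$. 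Supports of $f$ and $g$ play no role; only the disjointness of $U$ and $V$ does.

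Second, ``apply the strong Markov property at $\tau'$ to get $\E^x[\1_{\{\tau'\le b\}}P_{t-\tau'}g(X_{\tau'})]$'' skips one inequality. Strong Markov at $\tau'$ gives
\begin{align*}
\E^y\big[\1_{\{\tau'\le b\}}\big(P_a^U g\big)(X_b)\big]=\E^y\Big[\1_{\{\tau'\le b\}}\,P_{b-\tau'}\big(P_a^U g\big)(X_{\tau'})\Big],
\end{align*}
and to arrive at $P_{t-\tau'}g$ you must still invoke the monotone comparison $P_a^U g\le P_a g$ pointwise (valid because $g\ge 0$) together with $P_{b-\tau'}P_a=P_{t-\tau'}$. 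That last comparison is the final nontrivial step and is absent from your sketch. Your alternative route via $(P_tf,g)=(P_af,P_bg)$ is left entirely heuristic; the indicator/Fubini bookkeeping you describe does not obviously reproduce \eqref{eq:LBGK}, and the first route, with the two precisions above supplied, is the clean and standard argument.
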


\bigskip
	
\section{Proof of \autoref{lem:GF-dminus1}, \autoref{lem:Fl} and \autoref{lem:Gl}. }\label{sec:proof-lemmas}

The aim of this section is to provide the proofs of \autoref{lem:GF-dminus1}, \autoref{lem:Fl} and \autoref{lem:Gl}. The proofs rely on an involved technical result, \autoref{prop:main}. In \autoref{subsec:proofs-lemmas} we apply this result and derive the three lemmas. In \autoref{sec:proof-propo} we give the proof of \autoref{prop:main}.

\medskip

Recall that $\alpha\in (0, 2)$ and $t>0$. Given two points $x_0, y_0\in \R^d$ and $t>0$, we need to specify their relative position.

\begin{definition}\label{def:theta_and_R}
	Let $x_0, y_0 \in \R^d$ satisfy $|x_0^{i}-y_0^{i}|\le |x_0^{i+1}-y_0^{i+1}|$ for every $i\in\{1,\ldots, d-1\}$. Let $t>0$, set $\rho:=t^{1/\alpha}$. For $i\in\{1,\ldots, d\}$ define $\lengthR_i\in \Z$ and $R_i > 0$ such that 
	\begin{align}\label{d:thi_Ri}
	\tfrac{5}{4}2^{\lengthR_i} \le  \frac{|x_0^{i}-y_0^{i}|}{\rho}  < \tfrac{10}{4}2^{\lengthR_i} \qquad\mbox{and}\qquad	R_i=2^{\lengthR_i}\rho\,.
	\end{align}
	Then $\lengthR_i\le \lengthR_{i+1}$ and $R_i\le R_{i+1}$. We say that condition $\mathcal{R}(i_0)$ holds if 
	\begin{align}\label{eq:case-R-i0}
	\lengthR_1\le \ldots \le \lengthR_{i_0-1}\le 0<1\le \lengthR_{i_0}\le \ldots\le \lengthR_d\, 
	\tag*{$\mathcal{R}(i_0)$} .
	\end{align}
	We say that condition $\mathcal{R}(d+1)$ holds if $\lengthR_1\le \ldots \le \lengthR_d \leq 0 < 1$.
\end{definition}

In the proof of $\HHq{q}{0}$ and $\HHq{q}{l}$ we need to consider arbitrary tuples $(x_0, y_0)$. In specific geometric situations, the implications  \autoref{lem:GF-dminus1}, \autoref{lem:Fl} and \autoref{lem:Gl} follow directly.

\begin{lemma}\label{lem:reduction_cases}
Let $t > 0$ and $x_0, y_0$ be two points in $\R^d$ satisfying condition \ref{eq:case-R-i0} for $i_0 \in \{d-l+1, \ldots, d+1\}$.  Assume that $\HHq{q}{l}$ holds for some $l \in \{0,\ldots, d-1\}$ and $q \geq 0$. Then 
\begin{align}\label{eq:reduction_cases}
p_t(x_0, y_0) \le C t^{-d/\alpha}\prod_{i=1}^{d} \left(\frac{t}{|x_0^{i}-y_0^{i}|^\alpha}\wedge 	1\right)^{1+\alpha^{-1}}
\end{align}
for some constant $C > 0$ independent of $t$ and $x_0, y_0$.
\end{lemma}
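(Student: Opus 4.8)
\textbf{Proof plan for Lemma~\ref{lem:reduction_cases}.}

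The plan is to argue that, under condition $\mathcal{R}(i_0)$ with $i_0 \geq d-l+1$, the estimate supplied by $\HHq{q}{l}$ already has the right exponent $1+\alpha^{-1}$ in every factor that is genuinely "active", while the remaining factors are comparable to $1$, so no improvement is needed. First I would recall that by \autoref{lem:symmetry} we may use $\HHq{q}{l}$ in the form \eqref{eq:Hi-no-permute} for the points $x_0, y_0$, which by assumption are already ordered so that $|x_0^1-y_0^1|\le\cdots\le|x_0^d-y_0^d|$. This gives
\begin{align*}
p_t(x_0,y_0)\le C_l\, t^{-d/\alpha}\prod_{i=1}^{d-l}\Big(\frac{t}{|x_0^i-y_0^i|^\alpha}\wedge 1\Big)^{q}\prod_{i=d-l+1}^{d}\Big(\frac{t}{|x_0^i-y_0^i|^\alpha}\wedge 1\Big)^{1+\alpha^{-1}}.
\end{align*}
The task is then to show that the right-hand side is bounded by a constant multiple of the target $t^{-d/\alpha}\prod_{i=1}^d(\frac{t}{|x_0^i-y_0^i|^\alpha}\wedge 1)^{1+\alpha^{-1}}$.

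The key observation is the interplay between the ordering of the $\lengthR_i$ and the index $i_0$. Condition $\mathcal{R}(i_0)$ says $\lengthR_i \le 0$ for $i < i_0$ and $\lengthR_i \ge 1$ for $i \ge i_0$. For an index $i$ with $\lengthR_i \le 0$, the defining inequality \eqref{d:thi_Ri} gives $|x_0^i-y_0^i|/\rho < \tfrac{10}{4}2^{\lengthR_i}\le \tfrac{5}{2}$, hence $\frac{t}{|x_0^i-y_0^i|^\alpha}\wedge 1 \asymp 1$ (bounded above and below by absolute constants). Thus every factor with index $i < i_0$ can be replaced, up to a multiplicative constant depending only on $\alpha$, by the factor raised to \emph{any} exponent; in particular we may freely change its exponent from $q$ to $1+\alpha^{-1}$ or vice versa. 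Since the hypothesis $i_0 \ge d-l+1$ forces $\{1,\dots,d-l\}\subseteq\{1,\dots,i_0-1\}$, all the "$q$-exponent" factors $\prod_{i=1}^{d-l}(\cdots)^q$ sit in this harmless regime: each is comparable to $1$, and replacing each by $(\cdots)^{1+\alpha^{-1}}$ costs only a constant. The remaining factors $\prod_{i=d-l+1}^d(\cdots)^{1+\alpha^{-1}}$ already have the desired exponent. Collecting the constants — which depend only on $l, q, \Lambda, d, \alpha$ — yields \eqref{eq:reduction_cases}.

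There is essentially no serious obstacle here; the lemma is a bookkeeping step. The only point requiring a little care is the two-sided bound $\frac{t}{|x_0^i-y_0^i|^\alpha}\wedge 1 \asymp 1$ for $i<i_0$: one uses $\rho=t^{1/\alpha}$ and \eqref{d:thi_Ri} to see both that $|x_0^i-y_0^i|\le \tfrac52\rho$ (so the quantity is comparable to $1$ from below, since $t/|x_0^i-y_0^i|^\alpha \ge (2/5)^\alpha$) and that the minimum with $1$ keeps it bounded by $1$ from above; hence raising it to different positive powers changes it by at most a fixed constant. One should also record that if $i_0 = d+1$ then condition $\mathcal{R}(d+1)$ makes \emph{all} factors comparable to $1$ and the claim is immediate. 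I would close by remarking that this lemma is exactly what lets the later induction ignore "small-separation" coordinates and concentrate on the genuinely large ones.
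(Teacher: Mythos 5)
Your proof is correct and follows essentially the same route as the paper: in both, the key observation is that condition $\mathcal{R}(i_0)$ with $i_0 \ge d-l+1$ forces $\lengthR_j \le 0$ for every $j \le d-l$, so each factor $\bigl(\tfrac{t}{|x_0^j-y_0^j|^\alpha}\wedge 1\bigr)$ with $j \le d-l$ is pinned between a fixed positive constant and $1$, which lets you trade the exponent $q$ for $1+\alpha^{-1}$ at the cost of a constant, while for $i_0=d+1$ all $d$ factors are harmless. The only minor deviations are that invoking \autoref{lem:symmetry} is superfluous here (the ordering $|x_0^1-y_0^1|\le\cdots\le|x_0^d-y_0^d|$ is already part of condition $\mathcal{R}(i_0)$ via \autoref{def:theta_and_R}), and that the paper treats the case $i_0=d+1$ via \autoref{prop:xu1} rather than via $\HHq{q}{l}$; both variations are harmless.
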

\begin{proof}
(i) The case $\mathcal{R}(d+1)$ (i.e. $|x_0^{d}-y_0^{d}|< \tfrac{10}{4} t^{1/\alpha} $) is simple because, in this case \autoref{prop:xu1} implies $p_t(x_0, y_0) \le c t^{-d/\alpha}$. Estimate \eqref{eq:reduction_cases} follows.  
	
(ii) Assume, condition \ref{eq:case-R-i0} holds true for some $i_0 \in \{d-l+1, \ldots, d\}$. Then for any $j\le d-l\le i_0-1$
	\begin{align*}
	|x_0^{j}-y_0^{j}|<\tfrac{10}{4}2^{\lengthR_j}\rho \le \tfrac{10}{4}2^{\lengthR_{i_0-1}}\rho\le \tfrac{10}{4}\rho \quad \Longrightarrow \quad 
	\Big(\tfrac{t}{|x_0^{j}-y_0^{j}|^\alpha}\wedge 1\Big)\asymp 1 \,.
	\end{align*}
	Hence, $\HHq{q}{l}$ implies 
	\begin{align*}
	p_t(x_0, y_0) &\le C_l t^{-d/\alpha}\prod_{i=1}^{d-l}\left(\frac{t}{|x_0^{i}-y_0^{i}|^\alpha}\wedge 	1\right)^{q} \prod_{i= d-l+1}^d\left(\frac{t}{|x_0^{i}-y_0^{i}|^\alpha}\wedge 	1\right)^{1+\alpha^{-1}} \\
	&\asymp t^{-d/\alpha}\prod_{i=1}^{d} \left(\frac{t}{|x_0^{i}-y_0^{i}|^\alpha}\wedge 	1\right)^{1+\alpha^{-1}}\,.
	\end{align*}		
\end{proof}

\begin{remark*}
\autoref{lem:reduction_cases} allows us in the arguments below to restrict ourselves to the case \ref{eq:case-R-i0} for $i_0\in \{1,\ldots, d-l\}$. 
\end{remark*}

Here is our main technical result. 

\begin{proposition}\label{prop:main}
Let $\alpha \in (0,2)$. Assume that  $\HHq{q}{l}$ holds true for some $l\in \{0,1,\ldots, d-1\}$ and  $q \in [0,1 + \alpha^{-1}]$.
Let $t > 0$, set $\rho=t^{1/\alpha}$. Consider  $x_0, y_0\in \Rd$ satisfying the condition \ref{eq:case-R-i0} for some $i_0 \in \{1, \dots, d-l\}$. Let $f$ be a non-negative Borel function on $\Rd$ supported in $B(y_0, \tfrac{\rho}{8})$.
For $j_0 \in \{i_0,\ldots, d-l\}$ define an exit time $\tau$ by $\tau=\tau_{B(x_0, {R_{j_0}}/{8})}$, where $R_j=2^{\theta_j}\rho$ as in \eqref{d:thi_Ri}. Then there exists $C_{\ref{prop:main}}>0$ independent of  $x_0, y_0$ and $t$ such that for every $x\in B(x_0, \frac{\rho}{8})$,
	\begin{align}\label{eq:main1}
	\begin{split}
	&\E^{x}\left[\1_{\{\tau\le t/2\}}P_{t-\tau}f(X_{\tau})\right]\\
	&\le \,C_{\ref{prop:main}} t^{-d/\alpha} \|f\|_1  \prod_{j=j_0+1}^{d-l}
	\left(\tfrac{t}{|x_0^{j}-y_0^{j}|^\alpha} \wedge 1 \right)^q\prod_{j=d-l+1}^d\left(\tfrac{t}{|x_0^{j}-y_0^{j}|^{\alpha}} \wedge 1 \right)^{1+\alpha^{-1}} \\
	& \qquad \times 
	\begin{cases}
	\left(\tfrac{t}{| x_0^{j_0}-y_0^{j_0}|^\alpha} \wedge 1 \right)^{1+q}
	&\mbox{ if } q<\alpha^{-1}\\
	\left(\tfrac{t}{|x_0^{j_0}-y_0^{j_0}|^\alpha} \wedge 1 \right)^{1+\alpha^{-1}}
	&\mbox{ if } q>\alpha^{-1}.
	\end{cases}
	\end{split}
	\end{align}
\end{proposition}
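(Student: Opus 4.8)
The proof is built around the Lévy system formula \eqref{eq:LSd}. Write $B:=B(x_0,R_{j_0}/8)$, and split according to how far $X$ has jumped when it leaves $B$:
\[
\E^{x}\big[\1_{\{\tau\le t/2\}}P_{t-\tau}f(X_\tau)\big]=\E^{x}\big[\1_{\{\tau\le t/2,\,X_\tau\in B(x_0,R_{j_0})\}}P_{t-\tau}f(X_\tau)\big]+\E^{x}\big[\1_{\{\tau\le t/2,\,X_\tau\notin B(x_0,R_{j_0})\}}P_{t-\tau}f(X_\tau)\big].
\]
In the first (``small jump'') term the post-exit point satisfies $|X_\tau-x_0|<R_{j_0}$, so $|X_\tau^{k}-y_0^{k}|\asymp|x_0^{k}-y_0^{k}|$ for every $k\ge j_0$. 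Since $t-\tau\asymp t$ on $\{\tau\le t/2\}$, the hypothesis $\HHq{q}{l}$, applied through \autoref{lem:symmetry} in its unsorted form so that the exponent $1+\alpha^{-1}$ sits on the coordinates $d-l+1,\dots,d$ by fiat, yields the uniform bound $P_{t-\tau}f(X_\tau)\le Ct^{-d/\alpha}\|f\|_1\prod_{k=j_0}^{d-l}(t/|x_0^{k}-y_0^{k}|^\alpha\wedge1)^{q}\prod_{k=d-l+1}^{d}(t/|x_0^{k}-y_0^{k}|^\alpha\wedge1)^{1+\alpha^{-1}}$; multiplying by $\Pp^{x}(\tau\le t/2)\le CtR_{j_0}^{-\alpha}\asymp(t/|x_0^{j_0}-y_0^{j_0}|^\alpha\wedge1)$ (\autoref{prop:xu2}) upgrades the $j_0$-factor from power $q$ to power $1+q$, which dominates the claim in both cases since $t/|x_0^{j_0}-y_0^{j_0}|^\alpha\le1$.

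For the second (``large jump'') term I apply \eqref{eq:LSd} to the single jump by which $X$ leaves $B$ and lands outside $B(x_0,R_{j_0})$:
\[
\E^{x}\big[\1_{\{\tau\le t/2,\,X_\tau\notin B(x_0,R_{j_0})\}}P_{t-\tau}f(X_\tau)\big]=\E^{x}\Big[\int_0^{\tau\wedge t/2}\sum_{m=1}^{d}\int_{\{X_s+e_mh\notin B(x_0,R_{j_0})\}}P_{t-s}f(X_s+e_mh)\,J(X_s,X_s+e_mh)\,dh\,ds\Big].
\]
Since $X_s\in B$ for $s<\tau$, the constraint forces $|h|\ge\tfrac78R_{j_0}$, so the singular kernel $J(X_s,X_s+e_mh)\le\Lambda|h|^{-1-\alpha}$ is harmless near $h=0$ --- which is exactly why the small jumps had to be peeled off first. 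Bounding $P_{t-s}f$ again by $\HHq{q}{l}$ (again in unsorted form), using $t-s\asymp t$ and $\int_0^{\tau\wedge t/2}ds\le t/2=\rho^\alpha/2$, the term is at most $Ct^{-d/\alpha}\|f\|_1$ times a finite sum over the jump coordinate $m$ of a product of frozen factors --- which for $m\ge j_0$ reproduces, up to constants, the part $\prod_{k=j_0+1}^{d-l}(\cdots)^{q}\prod_{k=d-l+1}^{d}(\cdots)^{1+\alpha^{-1}}$ of the target --- times the one-dimensional integral
\[
\rho^{\alpha}\int_{\{|h|\gtrsim R_{j_0}\}}\Big(\frac{t}{|X_s^{m}+h-y_0^{m}|^{\alpha}}\wedge1\Big)^{e_m}|h|^{-1-\alpha}\,dh,\qquad e_m\in\{q,\,1+\alpha^{-1}\}.
\]

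Evaluating this integral by cutting the $h$-line into the dyadic shells of $|X_s^{m}+h-y_0^{m}|$ and summing the geometric series produces, for $m=j_0$, precisely the extra factor $(t/|x_0^{j_0}-y_0^{j_0}|^\alpha\wedge1)^{1+q}$ when $q<\alpha^{-1}$ and $(t/|x_0^{j_0}-y_0^{j_0}|^\alpha\wedge1)^{1+\alpha^{-1}}$ when $q>\alpha^{-1}$; the switch is exactly the dichotomy ``$\int_\rho^\infty r^{-\alpha e_m}\,dr<\infty\iff\alpha e_m>1$''. For $m<i_0$ the factor $(t/|X_s^m+h-y_0^m|^\alpha\wedge1)^{q}$ alone makes the integral converge with room to spare. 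The genuinely delicate case is $m>j_0$: one must still extract the missing $(t/R_{j_0}^\alpha)$-decay in coordinate $j_0$ while keeping the full target decay $(t/R_m^\alpha)^{e_m}$ in coordinate $m$, and I would do this by decomposing the large jump dyadically across the intermediate scales $R_{j_0}\le R_{j_0+1}\le\dots\le R_d$ and, in the sub-cases where the jump ``overshoots'' a top coordinate, by a recursion on $j_0$ inside the proof --- which is why the statement allows all $j_0\in\{i_0,\dots,d-l\}$ --- using the transfer inequality $(t/R_m^\alpha)^{a}\le(t/R_{j_0}^\alpha)^{b}(t/R_m^\alpha)^{a-b}$, valid since $R_m\ge R_{j_0}$ and $0\le b\le a$, to move surplus decay from coordinate $m$ onto coordinate $j_0$.

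The step I expect to be the real obstacle is precisely this bookkeeping in the large-jump term: one has to keep track simultaneously of all the scales $\rho\ll R_{i_0}\le\dots\le R_d$, of which coordinate the big exit jump belongs to and how large it is, and of how surplus decay may be transferred between coordinates, and then verify in every sub-case ($m<i_0$, $i_0\le m<j_0$, $m=j_0$, $j_0<m\le d-l$, $m>d-l$, each refined by whether coordinate $m$ lands near or far from $y_0^m$) that the resulting exponents match \eqref{eq:main1}. It is the loss of constants accumulated over these $O(d)$ scales, and in particular the dyadic chaining just described, that prevents one from reaching the exponent $1+\alpha^{-1}$ in a single step and forces the self-improvement to advance in the small increments $q\mapsto q+\lambda_l$ of \autoref{lem:Gl}; this case analysis is the technically heaviest part of the argument.
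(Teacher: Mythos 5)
Your proposal is structurally a different decomposition from the paper's, and it has a genuine gap that I don't believe the sketch as written can close.

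\medskip

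\emph{Different decomposition.} The paper does not split by jump size. Instead it fixes a dyadic box structure $\{A_k\}_{k\ge 0}$ centred at $y_0$ (Definition~\ref{def:D-sets}) and decomposes according to \emph{where} $X_\tau$ lands relative to $y_0$ and which coordinate $i$ the exit jump used, writing $\E^x[\1_{\{\tau\le t/2\}}P_{t-\tau}f(X_\tau)]=\Phi(0)+\sum_i \mathcal{S}^i +\sum_i\mathcal{T}^i$ as in \eqref{eq:dec-main}. The advantage is that, for each box, one simultaneously knows (a) the exact scale $2^{\gamma_j}\rho$ of $|X_\tau^j-y_0^j|$ in \emph{every} coordinate $j$, including $j<j_0$, which is fed directly into the $\HHq{q}{l}$ bound, and (b) a matching L\'evy-system estimate for the probability of landing in that box. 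Your split ``$X_\tau\in B(x_0,R_{j_0})$ versus not'' is coarser: it records only how far $X_\tau$ is from $x_0$, and once you pass to the large-jump L\'evy integral you keep only the one-dimensional coordinate of the jump, bounding the factors $(t/|X_\tau^j-y^j|^\alpha\wedge1)^q$ for $j<j_0$ by $1$. That loss is exactly what the paper avoids by keeping the term $2^{-\Upsilon(1,j_0-1)\alpha q}$ in \eqref{eq:Hlqg}.

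\medskip

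\emph{Where your argument breaks.} The gap is precisely the sub-case you flag as ``genuinely delicate'': a large jump in a coordinate $m$ with $m>j_0$, when the jump overshoots and lands near $y_0^m$. Take $m>d-l$ so $e_m=1+\alpha^{-1}>\alpha^{-1}$. Parametrising the landing distance $r=|X_s^m+h-y^m|$ with $|h|\asymp R_m$, the L\'evy system gives probability $\lesssim \rho^\alpha R_m^{-1-\alpha}\,\d r$, and the unsorted $\HHq{q}{l}'$ bound gives $(t/r^\alpha\wedge1)^{1+\alpha^{-1}}$ for slot $m$ and, crucially, only $(t/R_{j_0}^\alpha)^q$ for slot $j_0$ (since $|X_\tau^{j_0}-y^{j_0}|\asymp R_{j_0}$). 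Integrating over $r$ and multiplying by the frozen factors produces
\[
\rho^\alpha R_m^{-1-\alpha}\int_0^{R_m}\bigl(t/r^\alpha\wedge1\bigr)^{1+\alpha^{-1}}\,\d r\;\asymp\;\bigl(t/R_m^\alpha\bigr)^{1+\alpha^{-1}},
\]
so this contribution is comparable to the \emph{unimproved} full product $\prod_{j=j_0}^{d-l}(t/R_j^\alpha)^q\prod_{j=d-l+1}^d(t/R_j^\alpha)^{1+\alpha^{-1}}=F_{j_0}(l)$, which exceeds the target $F_{j_0+1}(l)\,(t/R_{j_0}^\alpha)^{1+q}=F_{j_0}(l)\,(t/R_{j_0}^\alpha)$ by a factor $(t/R_{j_0}^\alpha)^{-1}=R_{j_0}^\alpha/t\ge1$. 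Your ``transfer inequality'' $(t/R_m^\alpha)^a\le(t/R_{j_0}^\alpha)^b(t/R_m^\alpha)^{a-b}$ cannot close this: moving the exponent $b=1$ from slot $m$ to slot $j_0$ does give the desired $(t/R_{j_0}^\alpha)^{1+q}$, but at the cost of degrading slot $m$ from $(t/R_m^\alpha)^{1+\alpha^{-1}}$ to $(t/R_m^\alpha)^{\alpha^{-1}}$, which is \emph{weaker} than the target. Finally, the ``recursion on $j_0$'' you propose is not available inside the proof: the proposition is stated and must be proved for a fixed $j_0$, and the fact that \autoref{lem:Gl} later takes a minimum over $j_0\in\{i_0,\dots,d-l\}$ of the resulting bounds does not let the estimate for one $j_0$ invoke the estimate for another. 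To rescue the overshooting case one has to track the full $\gamma$-profile of the landing box (including the coordinates $j<j_0$ and, when the landed coordinate $m$ is small, the re-sorting that moves the exponent $1+\alpha^{-1}$ off slot $m$), which is exactly what the paper's decomposition into $A_k^{e_i}$ together with \autoref{lem:zyi}, \autoref{rem:nervig} and \eqref{eq:Hlqg} is organised to do; absent that machinery, your sketch does not reach the claimed estimate in this sub-case.
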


We postpone the proof of \autoref{prop:main} until \autoref{sec:proof-propo}.

\subsection{Proof of \autoref{lem:GF-dminus1}, \autoref{lem:Fl} and \autoref{lem:Gl}} \label{subsec:proofs-lemmas} {\ }\\

In this subsection we explain how to apply \autoref{prop:main} and derive \autoref{lem:GF-dminus1}, \autoref{lem:Fl} and \autoref{lem:Gl}. This proves \autoref{thm:main}.

Before we provide the actual proofs, let us explain how the estimate \eqref{eq:main1} is used in our approach. Consider non-negative Borel functions $f, g$ on $\R^d$ supported in $B(y_0, \frac{\rho}{8})$ and $B(x_0, \frac{\rho}{8})$, respectively.  
We apply 
\autoref{lem:LBGK} with functions $f, g$,  subsets $U:=B(x_0, s), V:=B(y_0, s)$ for some $s>0$, $a=b=t/2$ and $\tau=\tau_{U}, \tau^{'}=\tau_{V}$.
The first term of the right hand side of \eqref{eq:LBGK} is
\begin{align}\label{eq:m1}
\left(\E^{\cdot}\left[\1_{\{\tau\le t/2\}}P_{t-\tau}f(X_{\tau})\right], g\right)=\int_{B(x_0, \frac{\rho}{8})}\E^{x}\left[\1_{\{\tau\le t/2\}}P_{t-\tau}f(X_{\tau})\right]\,g(x)dx ,
\end{align}
and a similar identity holds for the second term. These identities are used in the proofs below.

\medskip

\begin{proof}[Proof of \autoref{lem:GF-dminus1}]
As mentioned in \autoref{rem:H-fundamentals}, $\HHq{q}{d-1}$ implies  $\HHq{1+\alpha^{-1}}{d-1}$ for $q> 1+\alpha^{-1}$. Thus when proving (ii) we may limit ourselves to the case $q\in (\frac{1}{\alpha}, 1+\frac{1}{\alpha}]$ for the rest of the proof.
Hence, for (ii) 
we assume $\HHq{q}{d-1}$ for $q\in (\frac{1}{\alpha}, 1+\frac{1}{\alpha}]$. Because of \autoref{lem:reduction_cases}, for any $t>0$ we only need to consider the case where $x_0, y_0$ satisfy condition $\mathcal{R}(1)$. 
Recall that $\rho=t^{1/\alpha}$ and $R_1=2^{\lengthR_1}\rho$. Applying \autoref{prop:main} with $i_0=j_0=1$, 
for $x\in B(x_0, \frac{\rho}{8})$ and $\tau=\tau_{B(x_0, \frac{R_{1}}{8})}$, we obtain
\begin{align*}
\E^{x}\left[\1_{\{\tau\le t/2\}}P_{t-\tau}f(X_{\tau})\right]
\le C_{\ref{prop:main}}t^{-d/\alpha} \|f\|_1
\prod_{j=2}^d\left(\tfrac{t}{|x_0^j-y_0^j|^{\alpha}}\right)^{1+\alpha^{-1}}
\begin{cases}
\left(\tfrac{t}{|x_0^1-y_0^1|^\alpha}\right)^{1+q}
&\mbox{ if } q<\alpha^{-1},\\
\left(\tfrac{t}{|x_0^1-y_0^1|^\alpha}\right)^{1+\alpha^{-1}}
&\mbox{ if } q>\alpha^{-1},
\end{cases}
\end{align*}
and by \eqref{eq:m1}, 
\begin{align*}
&\left(\E^{\cdot}\left[\1_{\{\tau\le t/2\}}P_{t-\tau}f(X_{\tau})\right], g\right)\nn\\
&\le  ct^{-d/\alpha} \|f\|_1\|g\|_1
\prod_{j=2}^d\left(\tfrac{t}{|x_0^j-y_0^j|^{\alpha}}\right)^{1+\alpha^{-1}}
\begin{cases}
\left(\tfrac{t}{|x_0^1-y_0^1|^\alpha}\right)^{1+q}
&\mbox{ if } q<\alpha^{-1},\\
\left(\tfrac{t}{|x_0^1-y_0^1|^\alpha}\right)^{1+\alpha^{-1}}
&\mbox{ if } q>\alpha^{-1}.
\end{cases}
\end{align*}
Similarly we obtain the second term of right hand side of \eqref{eq:LBGK} and therefore,
\begin{align*}
\left(P_tf, g\right)\le ct^{-d/\alpha} \|f\|_1\|g\|_1
\prod_{j=2}^d\left(\tfrac{t}{|x_0^j-y_0^j|^{\alpha}}\right)^{1+\alpha^{-1}}
\begin{cases}
\left(\tfrac{t}{|x_0^1-y_0^1|^\alpha}\right)^{1+q}
&\mbox{ if } q<\alpha^{-1},\\
\left(\tfrac{t}{|x_0^1-y_0^1|^\alpha}\right)^{1+\alpha^{-1}}
&\mbox{ if } q>\alpha^{-1}.
\end{cases}
\end{align*} 

Since $P_tf(x)=\int_{\Rd} p_t(x, y)f(y)dy$ and $p$ is a continuous function, we obtain for $t>0$ 
and $x_0, y_0$ satisfying the condition $\mathcal{R}(1)$ the following estimate
 \begin{align}
\label{eq:con1}
 p_t(x_0,y_0)
& \le c t^{-d/\alpha}\prod_{j=2}^d\left(\tfrac{t}{|x_0^j-y_0^j|^{\alpha}}\right)^{1+\alpha^{-1}}
 \begin{cases}
 \left(\tfrac{t}{|x_0^1-y_0^1|^\alpha}\right)^{1+q}
 &\mbox{ if } q<\alpha^{-1},\\
 \left(\tfrac{t}{|x_0^1-y_0^1|^\alpha}\right)^{1+\alpha^{-1}}
 &\mbox{ if } q>\alpha^{-1}
 \end{cases}\nn\\
 &\asymp c t^{-d/\alpha}\prod_{j=2}^d\left(\tfrac{t}{|x_0^j-y_0^j|^{\alpha}}\wedge 1\right)^{1+\alpha^{-1}}
 \begin{cases}
 \left(\tfrac{t}{|x_0^1-y_0^1|^\alpha}\wedge 1\right)^{1+q}
 &\mbox{ if } q<\alpha^{-1},\\
 \left(\tfrac{t}{|x_0^1-y_0^1|^\alpha}\wedge 1\right)^{1+\alpha^{-1}}
 &\mbox{ if } q>\alpha^{-1}.
 \end{cases}
 \end{align}
This proves  \autoref{lem:GF-dminus1}.
\end{proof}

\medskip
\begin{proof}[Proof of \autoref{lem:Fl}]
Let $l\in \{0, 1, \ldots, d-2\}$, $t>0$ and $x_0, y_0$ satisfy the condition \ref{eq:case-R-i0} for some $i_0\in\{1,\ldots, d-l\}$. As noted in \autoref{rem:H-fundamentals} $\HHq{q}{l}$ implies  $\HHq{1+\alpha^{-1}}{l}$ for $q> 1+\alpha^{-1}$.
Thus, we limit ourselves to the case 
$q\in (\alpha^{-1}, 1+\alpha^{-1}]$ 
for the rest of the proof.
Assume {$\HHq{q}{l}$} 
for $q\in (\alpha^{-1}, 1+\alpha^{-1}]$. Recall that $\rho=t^{1/\alpha}$ and $R_i=2^{\lengthR_i}\rho$. By \autoref{prop:main} with $j_0=d-l$, 
for $x\in B(x_0, \frac{\rho}{8})$ and $\tau=\tau_{B(x_0,\frac{R_{d-l}}{8})}$, we obtain 
\begin{align*}
\E^{x} &\left[\1_{\{\tau\le t/2\}}P_{t-\tau}f(X_{\tau})\right]\le \, C_{\ref{prop:main}}\,t^{-d/\alpha} \|f\|_1 \prod_{j=d-l}^d\left(\tfrac{t}{|x_0^j-y_0^j|^{\alpha}}\right)^{1+\alpha^{-1}}.
\end{align*}
Similarly to the proof of \autoref{lem:GF-dminus1}, we obtain for $t>0$ and for a.e. $(x, y)\in B(x_0, \frac{\rho}{8})\times B(y_0, \frac{\rho}{8})$, 
\begin{align*}
p_t(x, y)\le c t^{-d/\alpha}
\prod_{j=d-l}^d\left(\tfrac{t}{|x_0^j-y_0^j|^{\alpha}}\right)^{1+\alpha^{-1}}\,,
\end{align*}
and therefore for $t>0$ and $x_0, y_0$ satisfying the condition \ref{eq:case-R-i0},
\begin{align}\label{eq:con2}
p_t(x_0, y_0)&\le c t^{-d/\alpha}\prod_{i=d-l}^{d}\left(\frac{t}{|x_0^i-y_0^i|^{\alpha}}\wedge 1\right)^{1+\alpha^{-1}}.
\end{align}
This implies $\HHq{0}{l+1}$ for $l\in \{0, 1, \ldots, d-2\}$  by \autoref{lem:reduction_cases} and hence we have proved \autoref{lem:Fl}.
\end{proof}

\medskip

The proof of \autoref{lem:Gl} is more complicated.

\medskip
\begin{proof}[Proof of \autoref{lem:Gl}]
Let $l\in \{0, 1, \ldots, d-2\}$, $t>0$ and $x_0, y_0$ satisfy the condition \ref{eq:case-R-i0} for some $i_0\in\{1,\ldots, d-l\}$. Assume $\HHq{q}{l}$ for $ 0\le q<{\alpha}^{-1}$. Recall that $\rho=t^{1/\alpha}$ and $R_i=2^{\lengthR_i}\rho$.
By \autoref{prop:main} with  $j_0\in \{i_0, \ldots, d-l\}$, we obtain that for $x\in B(x_0, \frac{\rho}{8})$ and $\tau=\tau_{B(x_0, \frac{R_{j_0}}{8})}$
\begin{align*}
\E^{x}\left[\1_{\{\tau\le t/2\}}P_{t-\tau}f(X_{\tau})\right]
\le C_{\ref{prop:main}}t^{-d/\alpha} \|f\|_1
G_{j_0}(l)
\end{align*}
where
$$G_{j_0}(l):=\left(\tfrac{t}{|x_0^{j_0}-y_0^{j_0}|^\alpha}\right)^{1+q}\prod_{j={j_0+1}}^{d-l}
\left(\tfrac{t}{|x_0^{j}-y_0^{j}|^\alpha}\right)^q\prod_{j=d-l+1}^d\left(\tfrac{t}{|x_0^{j}-y_0^{j}|^{\alpha}}\right)^{1+\alpha^{-1}}.$$
Similarly to the proof of 
\autoref{lem:GF-dminus1}, we obtain for $t>0$ and for a.e. $(x, y)\in B(x_0, \frac{\rho}{8})\times B(y_0, \frac{\rho}{8})$
\begin{align*}
p_t(x, y)\le c t^{-d/\alpha}
G_{j_0}(l)\qquad\mbox{ for } j_0\in \{i_0, \ldots, d-l\},
\end{align*}
and hence for $t>0$ and $x_0, y_0$ satisfying the condition \ref{eq:case-R-i0},
\begin{align}\label{eq:upp11}
p_t(x_0, y_0)&\le c t^{-d/\alpha}\Big(G_{i_0}(l)\wedge G_{i_0+1}(l)\wedge \ldots\wedge G_{d-l}(l)\Big).
\end{align}

Given $l\in \{0, 1, \ldots, d-2\}$, in order to obtain $\lambda_l$ in \autoref{lem:Gl}, we first define $\lambda_l^{i_0}$ inductively for $i_0\in\{1, \ldots, d-l\}$. 
First, let 
$i_0=d-l$ and $j_0=d-l$.  By \eqref{eq:upp11}, for $t>0$ and $x_0, y_0$ satisfying $\mathcal{R}(d-l)$, we obtain that
\begin{align}\label{eq:d-l}
p_t(x_0, y_0)&\le c t^{-d/\alpha} \left(\tfrac{t}{|x_0^{d-l}-y_0^{d-l}|^\alpha}\right)^{1+q}\prod_{j=d-l+1}^d\left(\tfrac{t}{|x_0^{j}-y_0^{j}|^{\alpha}}\right)^{1+\alpha^{-1}}\nn\\
&\asymp c t^{-d/\alpha}\prod_{j={1}}^{d-l}
\left(\tfrac{t}{|x_0^{j}-y_0^{j}|^\alpha}\wedge 1\right)^{\lambda_l^{d-l}+q}\prod_{j=d-l+1}^d\left(\tfrac{t}{|x_0^{j}-y_0^{j}|^{\alpha}}\wedge 1\right)^{1+\alpha^{-1}}.
\end{align}
where $\lambda_l^{d-l}:=1$. 
If $i_0\in\{1, \ldots, d-l-1\}$, we set $$\lambda_l^{i_0}:=\frac{1}{2}\Big(\sum_{i=1}^{d-l-i_0}(1+\alpha^{-1})^{i}\Big)^{-1}\le \frac{1}{2}.$$
For $a, b\in \N_0$, define $\TT{a}{b}:=\Tj{a}{b}$ if $a\le b$, and otherwise  $\TT{a}{b}:=0$. If $0<\lambda_l^{i_0}\le \frac{\lengthR_{j_0}-q\,\TT{{i_0}}{j_0-1}}{\TT{{i_0}}{d-l}}$
for some  $j_0\in\{{i_0},\ldots, d-l-1\}$, 
since $\tfrac{t}{R_i^\alpha}= 2^{-\alpha \lengthR_i}$, we obtain  

\begin{align}
\Big(\tfrac{t}{R_{j_0}^\alpha}\Big)^{1+q} \prod_{j=j_0+1}^{d-l}\Big(\tfrac{t}{R_j^{\alpha}}\Big)^{q}
&= 2^{-\lengthR_{j_0}\alpha(1+q)} 2^{-\TT{j_0+1}{d-l}\alpha q} \nonumber \\ &\le 2^{-\alpha\TT{{i_0}}{d-l}(q+\lambda_l^{i_0})}=\prod_{j={i_0}}^{d-l}\Big(\tfrac{t}{R_j^{\alpha}}\Big)^{q+\lambda_l^{i_0}}\,. \label{eq:psa1}
\end{align}

On the other hand, if $\lambda_l^{i_0}>\max_{j_0\in\{{i_0}, \ldots, d-l-1\}}\left(\frac{\lengthR_{j_0}-q\,\TT{{i_0}}{j_0-1}}{\TT{{i_0}}{d-l}}\right)$, 
since
\begin{alignat*}{3}
&&&&\lambda_l^{i_0} &> \frac{\lengthR_{i_0}}{\TT{i_0}{d-l}}\,,\\
\lambda_l^{i_0} &> \frac{\lengthR_{i_0+1}}{\TT{i_0}{d-l}}-\lambda_l^{i_0} q \,\,
&&\Longrightarrow \,\, &(1+q)\lambda_l^{i_0} &\ge \frac{\lengthR_{i_0+1}}{\TT{i_0}{d-l}}\,,\\
\lambda_l^{i_0} &> \frac{\lengthR_{i_0+2}}{\TT{i_0}{d-l}}-(1+(1+q))\lambda_l^{i_0} q\,\,
&&\Longrightarrow\,\, &(1+q)^2\lambda_l^{i_0}&\ge \frac{\lengthR_{i_0+2}}{\TT{i_0}{d-l}}\,,\\
&\vdots &&\vdots &&\vdots \\
\lambda_l^{i_0} &> \frac{\lengthR_{d-l-1}}{\TT{i_0}{d-l}}-\Big(\sum_{i=1}^{d-l-1-i_0}(1+q)^{i-1}
\Big)\lambda_l^{i_0} q \quad 
&&\Longrightarrow \quad &(1+q)^{d-l-1-i_0}\lambda_l^{i_0} &\ge \frac{\lengthR_{d-l-1}}{\TT{i_0}{d-l}}\,,
\end{alignat*}
for $q<{\alpha}^{-1}$, we have that 
\begin{align*}
\frac{\TT{i_0}{d-l-1}}{\TT{i_0}{d-l}}(1+q)\le  \lambda_l^{i_0}\sum_{i=1}^{d-l-i_0}(1+q)^i\le \lambda_l^{i_0}\sum_{i=1}^{d-l-i_0}(1+\alpha^{-1})^i\le \frac{1}{2}.
\end{align*}
These observations imply that 
\begin{align*}
\frac{\lengthR_{d-l}}{\TT{i_0}{d-l}}(1+q)-q=1-\frac{\TT{i_0}{d-l-1}}{\TT{i_0}{d-l}}(1+q)\ge  \frac{1}{2}\ge \lambda_l^{i_0},
\end{align*}
and hence
\begin{align}\label{eq:psad1} 
\left(\tfrac{t}{R_{d-l}^{\alpha}}\right)^{1+q}= 2^{-\alpha \lengthR_{d-l}(1+q)}\le  2^{-\alpha\big(\TT{i_0}{d-l}\big)(q+\lambda_l^{i_0})}
=\prod_{j=i_0}^{d-l}\Big(\tfrac{t}{R_j^{\alpha}}\Big)^{q+\lambda_l^{i_0}}.
\end{align}
Since $R_i\asymp |x_0^i-y_0^i|$ (see \eqref{d:thi_Ri}), \eqref{eq:psa1}--\eqref{eq:psad1} yield that 
\begin{align*}
\Big(G_{i_0}(l)\wedge G_{i_0+1}(l)\wedge \ldots\wedge G_{d-l}(l)\Big)
&\le c \prod_{j=i_0}^{d-l}\Big(\tfrac{t}{|x_0^j-y_0^j|^{\alpha}}\Big)^{q+\lambda_l^{i_0}}
\prod_{i=d-l+1}^{d}\left(\tfrac{t}{|x_0^j-y_0^j|^{\alpha}}\right)^{1+\alpha^{-1}}.
\end{align*}
Combining the above inequality with \eqref{eq:upp11}, for any $t>0$ and $x_0, y_0$ satisfying \ref{eq:case-R-i0}, $i_0\in\{1, \ldots, d-l-1\}$, we obtain 
\begin{align}\label{eq:others}
p_t(x_0,y_0)& \le\,c t^{-d/\alpha}\prod_{j=i_0}^{d-l}\Big(\tfrac{t}{|x_0^j-y_0^j|^{\alpha}}  \Big)^{q+\lambda_l^{i_0}}\prod_{i=d-l+1}^{d}\left(\tfrac{t}{|x_0^j-y_0^j|^{\alpha}}  \right)^{1+\alpha^{-1}}\nn\\
&\asymp  t^{-d/\alpha}\prod_{i=1}^{d-l}\left(\tfrac{t}{|x_0^i-y_0^i|^{\alpha}}\wedge 1\right)^{q+\lambda_l^{i_0}}\prod_{i=d-l+1}^{d}\left(\tfrac{t}{|x_0^i-y_0^i|^{\alpha}}\wedge 1\right)^{1+\alpha^{-1}},
\end{align}
where $\lambda_l^{i_0}=\frac{1}{2}\Big(\sum_{i=1}^{d-l-i_0}(1+\alpha^{-1})^{i}\Big)^{-1}.$
Therefore by \eqref{eq:d-l} and \eqref{eq:others} in connection with \autoref{lem:reduction_cases}, 
we have $\HHq{q+\lambda_l}{l}$ with $\lambda_l:=\min_{i_0\in \{1,2,\ldots,d-l\}}\lambda_l^{i_0}=\frac{1}{2}\Big(\sum_{i=1}^{d-l-1}(1+\alpha^{-1})^{i}\Big)^{-1}$ for $l\in \{0, 1, \ldots, d-2\}$. Hence the proof of \autoref{lem:Gl} is complete.
\end{proof}

\medskip
	
Using \autoref{prop:main} we have established  \autoref{lem:GF-dminus1}, \autoref{lem:Fl} and \autoref{lem:Gl}. This completes the proof of \autoref{thm:main}.

\bigskip

\section{Proof of \autoref{prop:main}} \label{sec:proof-propo}

In this section, we present the proof of our main technical result, \autoref{prop:main}. The proof is based on several auxiliary observations and computations. 

\medskip

We first introduce a decomposition of $\R^d$ given by sets $D_k \subset \R^d$. Later, in the main proof we 
fix $y_0 \in \R^d$ and $\rho > 0$, and 
work with sets $A_k = y_0+\rho D_k$.

\begin{definition}\label{def:D-sets}{\ }
	\begin{enumerate}
		\item[(0)] Define $D_0 =  \bigcup_{i=1,\ldots, d} \{ |x_i| < 1\} \cup (-2,2)^d $.
		\item[(i)] Given $k\in \N, \gamma \in \N_0^d$ with $\sum_{i=1}^d \gamma_i = k$ and $\switch \in \{-1,1\}^d \,$,
		define a box (hyper-rectangle) $D_k^{\gamma, \switch}$ by
		\begin{align*}
		D_k^{\gamma, \switch} = \switch_1 [2^{\gamma_1},2^{\gamma_1+1}) \times 
		\switch_2 [2^{\gamma_2},2^{\gamma_2+1}) \times \ldots \times 
		\switch_d [2^{\gamma_d},2^{\gamma_d+1}) \,. 
		\end{align*}
		\item[(ii)] Given $k\in \N$ and $\gamma \in \N_0^d$ with $\sum_{i=1}^d \gamma_i = k$, 
		define
		\begin{align*}
		D_k^{\gamma} = 
		\bigcup_{\switch \in \{-1,1\}^d}  D_k^{\gamma, \switch}	\,. 
		\end{align*}
		\item[(iii)] Given $k\in \N$, define
		\begin{align*}
		D_k = \bigcup_{\gamma \in \N_0^d:\, \sum_{i=1}^d \gamma_i = k}  D_k^{\gamma}  \,. 
	\end{align*}
	\end{enumerate}
\end{definition}

Next, we define shifted boxes with center $y_0$. 
For $y_0\in \R^d$, $t > 0$ and $\rho=t^{1/\alpha}$ 
let $A_0:=y_0+\rho D_0$. Given $k \in \N,  \gamma \in \N_0^d$ with $\sum_{i=1}^d \gamma_i = k$ and $\switch \in \{-1,1\}^d$, we define 
\begin{align*}
A_{k, \gamma, \switch} &=y_0+\rho D_k^{\gamma, \switch} \,, \\
A_{k, \gamma} &= y_0+\rho D_k^{\gamma} \,, \\
A_k &= y_0+\rho D_k \,.
\end{align*}

\medskip

\begin{figure}
	\includegraphics{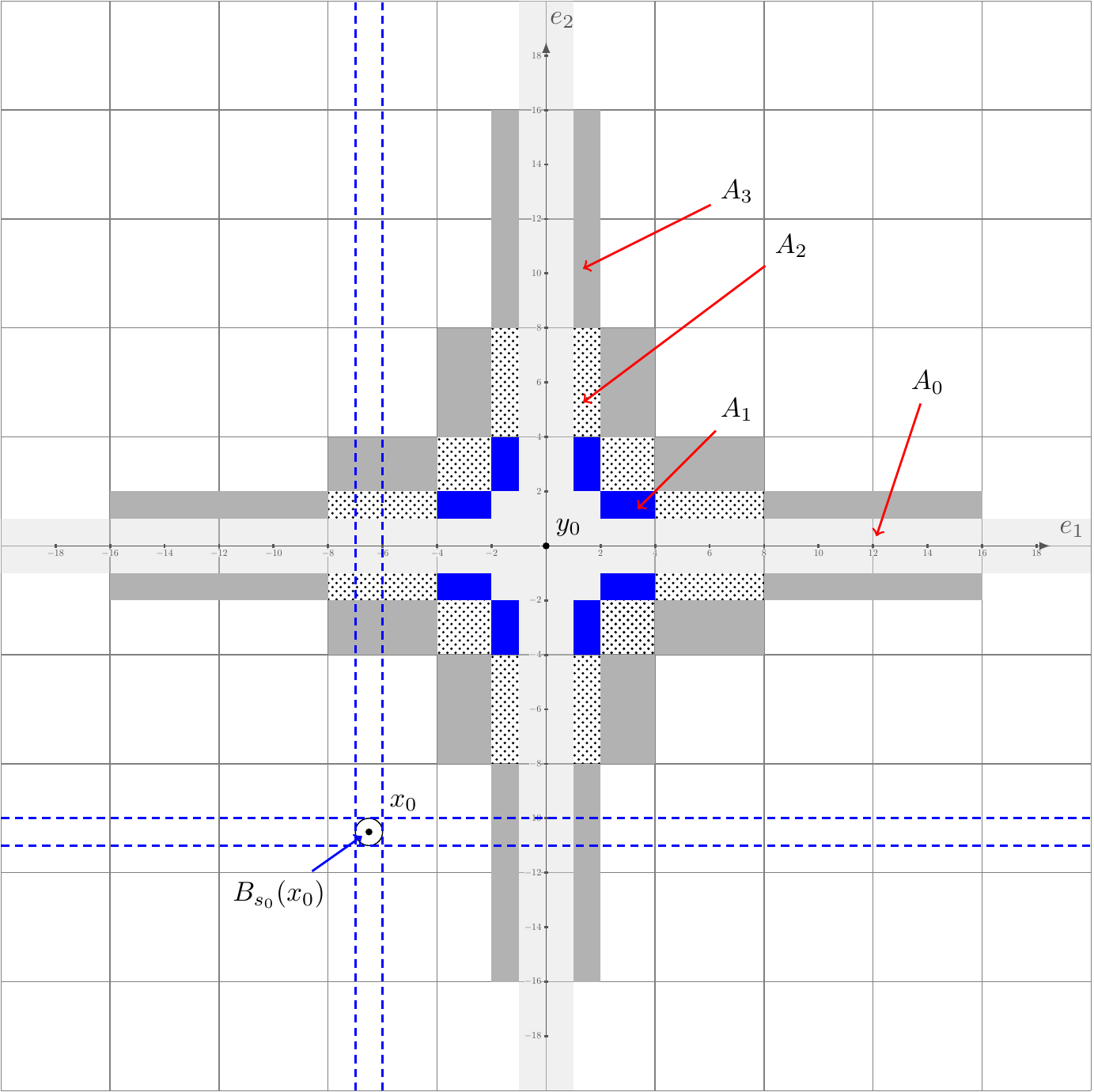}
	\caption{The sets $A_k$ and $B_{S_0}(x_0):=B(x_0, s(j_0))$}
	\label{fig:1}
\end{figure}

\medskip

Let us collect some useful properties of the boxes resp. the corresponding decomposition. We formulate the results for the sets $D_k$ but they imply corresponding results for the sets $A_k$ such as $A_{k}\cap A_{l}=\emptyset$ for $k\neq l$ and $\cup_{k=0}^{\infty}A_k =\R^d$.

\begin{lemma}
Let $k \in \N_0,  \gamma \in \N_0^d$ and $\switch \in \{-1,1\}^d \,$.
	\begin{enumerate}
		\item Given $k\in \N, \gamma$ with $\sum_{i=1}^d \gamma_i = k$, there are $2^d$ sets of the form $D_k^{\gamma, \switch}$, 
		and $|D_k^{\gamma, \switch}|= \prod_{i=1}^d 2^{\gamma_i} = 2^{k}$. 
		\item Given $k \in \N, \switch \in \{-1,1\}^d$, there are 
		$\left( \begin{array}{c} d+k-1 \\ d-1 \end{array} \right)$ sets $D_k^{\gamma, \switch}$ with $\sum_{i=1}^d \gamma_i = k$. Thus, the set $D_k$ consist of $2^d \left(\begin{array}{c} d+k-1 \\ d-1 \end{array} \right)$ disjoint boxes. 
		\item $D_k \cap D_l = \emptyset$ if $k \ne l$ and 
		$\bigcup_{k \in \N_0} D_k = \R^d$.
	\end{enumerate}
\end{lemma}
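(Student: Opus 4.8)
The three assertions are all elementary counting or measure-theoretic facts about the decomposition in \autoref{def:D-sets}, so the plan is simply to unwind the definitions and keep track of the combinatorics. First I would prove (1): for fixed $k \geq 1$ and fixed $\gamma \in \N_0^d$ with $\sum_{i=1}^d \gamma_i = k$, the box $D_k^{\gamma,\switch}$ is determined by the sign vector $\switch \in \{-1,1\}^d$, of which there are exactly $2^d$; the volume of $D_k^{\gamma,\switch}$ is the product of the side lengths $2^{\gamma_i+1} - 2^{\gamma_i} = 2^{\gamma_i}$, giving $|D_k^{\gamma,\switch}| = \prod_{i=1}^d 2^{\gamma_i} = 2^{\sum_i \gamma_i} = 2^k$. (One should note the $\switch$ only reflects the box through coordinate hyperplanes, hence does not change Lebesgue measure.)

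Next I would prove (2): fixing $k \in \N$ and a sign vector $\switch$, the boxes $D_k^{\gamma,\switch}$ are indexed by the weak compositions $\gamma \in \N_0^d$ of $k$ into $d$ nonnegative parts, and the number of such compositions is the classical stars-and-bars count $\binom{d+k-1}{d-1}$. To conclude that $D_k$ is a disjoint union of $2^d \binom{d+k-1}{d-1}$ boxes, one must check that the boxes $D_k^{\gamma,\switch}$ are pairwise disjoint as $(\gamma,\switch)$ varies: if the sign vectors differ in some coordinate $i$, the $i$-th projections $\switch_i[2^{\gamma_i},2^{\gamma_i+1})$ lie in disjoint half-lines (one in $(0,\infty)$, one in $(-\infty,0)$), since $2^{\gamma_i} > 0$; if the signs agree but $\gamma \ne \gamma'$, pick a coordinate $i$ with $\gamma_i \ne \gamma_i'$ and observe the dyadic intervals $[2^{\gamma_i},2^{\gamma_i+1})$ and $[2^{\gamma_i'},2^{\gamma_i'+1})$ are disjoint. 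Hence the total count is $2^d$ times the number of compositions.

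For (3), disjointness of $D_k$ and $D_l$ for $k \ne l$: a point $x$ in some $D_k^{\gamma,\switch}$ has, in each coordinate, $|x_i| \in [2^{\gamma_i}, 2^{\gamma_i+1})$, so $\gamma_i = \lfloor \log_2 |x_i| \rfloor$ is recovered from $x$; thus $k = \sum_i \gamma_i$ is determined by $x$, forcing $D_k \cap D_l = \emptyset$ for $k \ne l$ (the $D_k$ with $k \geq 1$ do not meet, and $D_0$ is handled by the observation below). To see $\bigcup_{k \in \N_0} D_k = \R^d$: given $x \in \R^d$, if some $|x_i| < 1$ or $x \in (-2,2)^d$ then $x \in D_0$; otherwise every $|x_i| \geq 1$, so each $\gamma_i := \lfloor \log_2 |x_i| \rfloor \in \N_0$ is well defined and $x \in D_k^{\gamma,\switch}$ with $\switch_i = \operatorname{sgn}(x_i)$ and $k = \sum_i \gamma_i \geq 1$. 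Finally, the claimed consequences for the shifted sets $A_k = y_0 + \rho D_k$ follow immediately since $x \mapsto y_0 + \rho x$ is a bijection of $\R^d$ scaling Lebesgue measure by $\rho^d$; in particular $A_k \cap A_l = \emptyset$ for $k \ne l$ and $\bigcup_{k} A_k = \R^d$.

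There is no real obstacle here; the only point demanding a little care is the overlap between $D_0$ and the boxes $D_k^{\gamma,\switch}$ with $k \geq 1$ --- these are \emph{not} disjoint from $D_0$ (e.g.\ $D_0$ contains part of $(-2,2)^d$), so the statement (3) should be read, and proved, as disjointness among the $D_k$ with $k \geq 1$ together with $\bigcup_{k \geq 0} D_k = \R^d$; I would flag this explicitly rather than overclaim a partition.
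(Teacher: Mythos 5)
Your combinatorial counting in parts (1) and (2) is correct, and the coverage argument in (3) is fine. But the proof goes wrong on the disjointness of $D_0$ from the boxes with $k\geq 1$, in two ways. First, you write that $D_0$ is ``handled by the observation below,'' but the observation below is the coverage argument, which does not establish disjointness. Second, and more seriously, your final paragraph claims the boxes $D_k^{\gamma,\switch}$ with $k\geq 1$ ``are not disjoint from $D_0$'' and recommends weakening assertion (3). This is wrong: the lemma as stated is correct, and there is no overlap to flag.

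To see this, take any $z\in D_k^{\gamma,\switch}$ with $k\geq 1$. Then $|z_i|\in[2^{\gamma_i},2^{\gamma_i+1})$ with $\gamma_i\geq 0$ for every $i$, so $|z_i|\geq 1$ for all $i$, which rules out $z$ belonging to any slab $\{|x_i|<1\}$. Moreover $\sum_i\gamma_i=k\geq 1$ forces $\gamma_j\geq 1$ for at least one $j$, so $|z_j|\geq 2$, which rules out $z\in(-2,2)^d$. Since $D_0=\bigcup_i\{|x_i|<1\}\cup(-2,2)^d$, we conclude $z\notin D_0$, i.e.\ $D_0\cap D_k=\emptyset$ for every $k\geq 1$. (Your parenthetical ``$D_0$ contains part of $(-2,2)^d$'' is not an example of overlap; in fact $D_0$ contains all of $(-2,2)^d$, and none of that cube is contained in any $D_k$ with $k\geq 1$ precisely because of the forced coordinate with $|z_j|\geq 2$.) With this correction, your recovery argument $\gamma_i=\lfloor\log_2|x_i|\rfloor$ settles disjointness among the $D_k$ with $k\geq 1$, the displayed argument settles $D_0$ against the rest, and the lemma's partition claim holds exactly as stated --- there is nothing to soften.
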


Since the proof is elementary, we omit it.  

\medskip

The proof of \autoref{prop:main} is based on several technical observations. We assume $l\in \{0,1,\ldots, d-1\}$ and $i_0 \in \{1, \dots, d-l\}$. We assume $t>0$ and $x_0, y_0 \in \R^d$ such that the condition \ref{eq:case-R-i0} is satisfied. We set $\rho = t^{1/\alpha}$. The main idea is to use a decomposition of the left-hand side in \eqref{eq:main1} according to the following: 
\begin{align}\label{eq:dec}
\E^{x}\left[\1_{\{\tau\le t/2\}}P_{t-\tau}f(X_{\tau})\right]=\sum_{k=0}^\infty \E^{x}\left[\1_{\{\tau\le t/2\}}\1_{\{X_{\tau}\in A_k\}}P_{t-\tau}f(X_{\tau})\right] \,,
\end{align}
where $x\in B(x_0, \frac{\rho}{8})$, $s(j_0)=\frac{R_{j_0}}{8}$ and $\tau=\tau_{B(x_0, s(j_0))}$. This approach requires a careful tracking of the random points $X_\tau$. We will need some refinements of the decomposition \eqref{eq:dec}. To this end, set
\begin{alignat}{2}
\Phi(k)&:=\E^{x}\left[\1_{\{\tau\le t/2\}}\1_{\{X_{\tau}\in A_k\}}P_{t-\tau}f(X_{\tau})\right] \qquad &&\text{ for } 
k\in \N_0, \label{eq:def-Phi-k}\\
\Phi^i(k)&:=\E^{x}\left[\1_{\{\tau\le t/2\}}\1_{\{X_{\tau}\in A_k^{e_i}\}}P_{t-\tau}f(X_{\tau})\right] \qquad &&\text{ for } k\in \N_0,\,\, i\in \{1,2,\ldots, d\}\,, \label{eq:def-Phi-i-k}
\end{alignat}
where, for $k \in \N_0$ and $i \in \{1,2, \ldots, d\}$, we set
\begin{align*}
A_k^{e_i}: = A_k \cap  \bigcup\limits_{u \in B(x_0,s(j_0))} \{ u +  h e_i | \,  h \in \R\} \,.
\end{align*}
The set $A_k^{e_i}$ contains all possible points in $A_k$ that the process $X$ can jump to when it leaves the ball $B(x_0,s(j_0))$ by a jump in direction $i$. Note that the set $A_k^{e_i}$ depends on $j_0$ resp. $s(j_0)$, too.  The decomposition \eqref{eq:dec} now can be written in the following way: 

\begin{align} \label{eq:dec-1}
\begin{split}
\E^{x}\left[\1_{\{\tau\le t/2\}}P_{t-\tau}f(X_{\tau})\right]
&=\sum_{k=0}^{\infty}\Phi(k)
=\sum_{k=1}^{\infty}\left(\sum_{i=1}^{d}\Phi^i(k)\right)+\Phi(0) \\
&= \sum_{k=1}^{\infty}\Phi^d(k) + \sum_{k=1}^{\infty}\Phi^{d-1}(k)  +\ldots+   \sum_{k=1}^{\infty}\Phi^1(k)  + \Phi(0) 
\end{split}
\end{align}

Next, let us quantify the position of the random point $X_\tau$, which the process jumps to when leaving the ball $B(x_0, s(j_0))$. Recall that for $i=1,2,\ldots, d$, in \autoref{def:theta_and_R}, we have set $R_i=2^{\lengthR_i}\rho$, which implies 
\begin{align*}
\tfrac{5}{4}R_i\le  |x_0^{i}-y_0^{i}| < \tfrac{10}{4}R_i\,.
\end{align*}

\begin{lemma}\label{lem:jAn} Recall that $x_0, y_0 \in \R^d$ satisfy condition \ref{eq:case-R-i0} for some $i_0$ and that $s(j_0) = \frac{R_{j_0}}{8}$ for  $j_0\in \{i_0,\ldots, d\}$. The following holds true:
	\begin{align*}
&	\bigcup\limits_{u \in B(x_0,s(j_0))} \{ u +  h e_i | \, h \in \R\}\subset y_0+ \Big(\bigotimes_{j=1}^{i-1}\mathcal{J}_{\lengthR_{j_0}}\times \R\times\bigotimes_{j=i+1}^{j_0-1}\mathcal{J}_{\lengthR_{j_0}}\times\bigotimes_{j=j_0}^{d} \mathcal{I}_{\lengthR_j}
\Big) 
&\mbox{ if }i< j_0\,,\\
&	\bigcup\limits_{u \in B(x_0,s(j_0))} \{ u + h e_i | \, h \in \R\}\subset y_0+ \Big(\bigotimes_{j=1}^{j_0-1}\mathcal{J}_{\lengthR_{j_0}}\times\bigotimes_{j=j_0}^{i-1} \mathcal{I}_{\lengthR_j}
\times\R\times\bigotimes_{j=i+1}^{d} \mathcal{I}_{\lengthR_j}
\Big)
&\mbox{ if }i\ge j_0\,,
	\end{align*}
	where $\mathcal{I}_{\lengthR_j}:= \pm[2^{\lengthR_j}\rho, 2^{\lengthR_j+2}\rho)$, $\mathcal{J}_{\lengthR_{j_0}}:= \pm[0, 2^{\lengthR_{j_0}+2}\rho)$, and $\theta_{j_0}, \ldots, \theta_{d} \in \N$. 
\end{lemma}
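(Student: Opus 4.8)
The plan is to prove \autoref{lem:jAn} by a direct geometric computation: describe explicitly the coordinates of points in the line $\{u + he_i : h \in \R\}$ for $u \in B(x_0, s(j_0))$, and then bound each coordinate using the relations between $x_0$, $y_0$ and the $R_j$ coming from \autoref{def:theta_and_R} and condition \ref{eq:case-R-i0}. For a point $u = (u^1, \ldots, u^d) \in B(x_0, s(j_0))$ we have $|u^j - x_0^j| < s(j_0) = R_{j_0}/8$ for every $j$, and a generic point on the line through $u$ in direction $e_i$ has $j$-th coordinate equal to $u^j$ for $j \ne i$ and an arbitrary real in the $i$-th slot. So for $j \ne i$ the task is to locate $u^j - y_0^j$, and for $j = i$ the $i$-th coordinate ranges over all of $\R$, which matches the factor $\R$ appearing in the claimed product.

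The key step is the per-coordinate estimate. Fix $j \ne i$. First I would write $u^j - y_0^j = (u^j - x_0^j) + (x_0^j - y_0^j)$, so that $\big| |u^j - y_0^j| - |x_0^j - y_0^j| \big| \le |u^j - x_0^j| < R_{j_0}/8$. Now split into cases according to how $j$ compares with $j_0$. If $j \ge j_0$, then by \eqref{d:thi_Ri} we have $\tfrac54 R_j \le |x_0^j - y_0^j| < \tfrac{10}{4}R_j$ and, since $\theta_j \ge \theta_{j_0}$ (hence $R_j \ge R_{j_0}$), the perturbation $R_{j_0}/8 \le R_j/8$ is small compared to $R_j$; a short arithmetic check gives $\tfrac54 R_j - \tfrac18 R_j = \tfrac98 R_j \le |u^j - y_0^j| < \tfrac{10}{4}R_j + \tfrac18 R_j = \tfrac{21}{8}R_j$, and since $[\tfrac98 R_j, \tfrac{21}{8}R_j) \subset [R_j, 4R_j) = [2^{\theta_j}\rho, 2^{\theta_j+2}\rho)$ (using $\tfrac{21}{8} < 4$), this shows $u^j - y_0^j \in \mathcal{I}_{\theta_j} = \pm[2^{\theta_j}\rho, 2^{\theta_j+2}\rho)$. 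If instead $j < j_0$, I only need the upper bound: $|u^j - y_0^j| < |x_0^j - y_0^j| + \tfrac18 R_{j_0} < \tfrac{10}{4}R_{j_0} + \tfrac18 R_{j_0} = \tfrac{21}{8}R_{j_0} < 4 R_{j_0} = 2^{\theta_{j_0}+2}\rho$, using $|x_0^j - y_0^j| \le |x_0^{j_0} - y_0^{j_0}| < \tfrac{10}{4}R_{j_0}$ since $j \le j_0$ and the coordinates are ordered; this places $u^j - y_0^j \in \mathcal{J}_{\theta_{j_0}} = \pm[0, 2^{\theta_{j_0}+2}\rho)$. Assembling these coordinatewise memberships, with the $i$-th factor being $\R$, yields exactly the two claimed inclusions according to whether $i < j_0$ or $i \ge j_0$. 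Finally, $\theta_{j_0}, \ldots, \theta_d \in \N$ follows because $j_0 \ge i_0$ and condition \ref{eq:case-R-i0} asserts $1 \le \theta_{i_0} \le \cdots \le \theta_d$.

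The only mild subtlety — and the step I expect to require the most care — is the bookkeeping of constants: one must verify that $\tfrac{21}{8} < 4$ and $\tfrac98 \ge 1$ so that the perturbed interval $[\tfrac98 R_j, \tfrac{21}{8}R_j)$ genuinely sits inside $[2^{\theta_j}\rho, 2^{\theta_j+2}\rho)$, and that the radius $s(j_0) = R_{j_0}/8$ is small enough relative to every $R_j$ with $j \ge j_0$ (which holds since $R_j$ is nondecreasing in $j$). There is no analytic difficulty here; the lemma is purely a matter of chasing inequalities, and I would present it as such, perhaps with a single displayed computation per case rather than grinding through all coordinates separately.
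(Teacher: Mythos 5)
Your proof is correct and follows essentially the same route as the paper: fix a coordinate $j\ne i$, write $u^j - y_0^j = (u^j - x_0^j) + (x_0^j - y_0^j)$, use the triangle inequality with $|u^j - x_0^j| < R_{j_0}/8 \le R_j/8$ for $j\ge j_0$, and check the arithmetic $\tfrac98 R_j \le |u^j-y_0^j| < \tfrac{21}{8}R_j \subset [R_j, 4R_j)$, with the analogous one-sided bound for $j < j_0$. The paper's proof is the same coordinatewise triangle-inequality chase with identical constants.
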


\medskip

\begin{proof}
The set on the left-hand side describes all possible points that the jump process can jump to when leaving the set $B(x_0, s(j_0))$ by a jump in the $i$-th coordinate direction. In the coordinate direction $k$ for $k \leq j_0$ it might happen that the ball $B(x_0, s(j_0))$ intersects a coordinate axis.

\medskip 

Take 
$z\in \bigcup\limits_{u \in B(x_0,s(j_0))} \{ u + h e_i | \, h \in \R\}$ for $i\in \{1, \ldots, d\}$.
For any $j\in \{j_0,\ldots, d\}\backslash \{i\}$, since $R_j\ge R_{j_0}$ and $u^j=z^j$, it holds that 
$$R_j\le \tfrac{5}{4}R_j-s(j_0)\le |x_0^j-y_0^j|-|x_0^j-z^j|\le |z^j-y_0^j|	
\le |x_0^j-y_0^j|+|x_0^j-z^j|\le\tfrac{10}{4}R_j+s(j_0)< 4 R_j,$$
and therefore $|z^j-y_0^j|\in [2^{\lengthR_j}\rho , 2^{\lengthR_j+2}\rho)$ with $\lengthR_j\in \N$.
For any $j\in \{1,\ldots, j_0-1\}\backslash \{i\}$, since $R_j< R_{j_0}$ and $u^j=z^j$,
$$0\le |z^j-y_0^j|	\le |x_0^j-y_0^j|+|x_0^j-z^j|\le\tfrac{10}{4}R_j+s(j_0)< 4 R_{j_0},$$
and  $|z^j-y_0^j|\in [0, 2^{\lengthR_{j_0}+2}\rho)$ with $\lengthR_{j_0}\in\N$. The proof of the lemma is complete.
\end{proof}

\medskip

\begin{remark}\label{rem:nervig}
	\autoref{lem:jAn} implies the following observation for $k \in \N_0$ and $i \in \{1, \ldots, d\}$:
	\begin{align}
	A^{e_i}_k \ne \emptyset \qquad  &\Longrightarrow 
	\begin{cases}
	k= 0\,\mbox{ or }\, k \geq \sum\limits_{j\in\{j_0,\ldots,d\}} \lengthR_j \quad &\text{ if } i< j_0\,, \\
	k= 0\,\mbox{ or }\, k \geq 
	\sum\limits_{j\in\{j_0,\ldots,d\}\backslash\{i\} }\lengthR_j &\text{ if } i\ge j_0 \,. 
	\end{cases}
	\end{align}
	More precisely, for $z\in A_{k}^{e_i}$, $k\ge 1$, there exists $\gamma\in\N_0^d$ such that $z\in A_{k,\gamma}$, so that  $|z^j-y_0^j|\in [2^{\lengthR_j}\rho,  2^{\lengthR_j+2}\rho)\cap [ 2^{\gamma_j}\rho,  2^{\gamma_j+1}\rho)$ for $j\in\{j_0, \ldots, d\}\backslash\{i\}$ by \autoref{lem:jAn}. Since $\lengthR_j, \gamma_j\in\Z$,  $\gamma_j$ is one of $\theta_j $ or $\theta_j+1$ for $j\in\{j_0, \ldots, d\}\backslash\{i\}$. Therefore
	$$k=\sum\limits_{j\in\{1,\ldots,d\}} \gamma_j\geq	 \sum\limits_{j\in\{j_0,\ldots,d\}} \gamma_j\geq \sum\limits_{j\in\{j_0,\ldots,d\}} \lengthR_j \quad \text{ if } i< j_0,$$
	and $ k\geq	 \sum\limits_{j\in\{j_0,\ldots,d\}\backslash\{i\}} \gamma_j\geq \sum\limits_{j\in\{j_0,\ldots,d\}\backslash\{i\}} \lengthR_j$ if $i\ge j_0 $.
\end{remark}

\begin{lemma}\label{lem:zyi}
(i) For all $k\in\N$, $i\in\{1, \ldots, d\}$ and $z\in A_{k}^{e_i}$ there is $\gamma\in \N_0^d$ with $\sum_{i=1}^d \gamma_i = k$ such that for all $y\in B(y_0, \tfrac{\rho}{8})$ and $j\in \{1, \ldots, d\}$
\begin{align}
|z^j-y^j|\in [2^{\gamma_j-1}\rho,  2^{\gamma_j+2}\rho)   \,. \label{eq:zyjg}
\end{align}
(ii) For all $k\in\N_0$, $i\in\{1, \ldots, d\}$, $z\in A_{k}^{e_i}$, $y\in B(y_0, \tfrac{\rho}{8})$ and all $j\in \{1, \ldots, d\}$
\begin{alignat}{2}
&|z^j-y^j|\in [0,  2^{\lengthR_{j_0}+3}\rho) 
\qquad &&\mbox{ if } j\in \{1, \ldots, j_0-1\}\backslash\{i\},\label{eq:zyj0}\\
&|z^j-y^j|\in [ 2^{\lengthR_j -1}\rho,  2^{\lengthR_j+3}\rho)	\quad &&\mbox{ if } j\in \{ j_0\ldots, d\}\backslash\{i\} \,. \label{eq:zyj}
\end{alignat}	
\end{lemma}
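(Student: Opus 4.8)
\medskip

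The plan is to deduce both statements from \autoref{lem:jAn} together with the triangle inequality, using the fact that $y$ is close to $y_0$ (within $\rho/8$) so that the coordinate-wise distances $|z^j-y^j|$ differ from $|z^j-y_0^j|$ only by an additive error of at most $\rho/8$, which is small compared to the dyadic scales involved. First I would record the elementary observation that for $y\in B(y_0,\tfrac{\rho}{8})$ one has $|y^j-y_0^j|\le |y-y_0|<\tfrac{\rho}{8}$ for every coordinate $j$, hence
\[
|z^j-y_0^j|-\tfrac{\rho}{8}\le |z^j-y^j|\le |z^j-y_0^j|+\tfrac{\rho}{8}\,.
\]
This reduces everything to controlling $|z^j-y_0^j|$, which is exactly what \autoref{lem:jAn} (via \autoref{rem:nervig}) provides.

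\medskip

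For part (ii), I would argue directly from \autoref{lem:jAn}. Fix $z\in A_k^{e_i}$ and recall $z\in\bigcup_{u\in B(x_0,s(j_0))}\{u+he_i\mid h\in\R\}$ for this $i$. If $j\in\{1,\ldots,j_0-1\}\setminus\{i\}$, the lemma gives $|z^j-y_0^j|\in\mathcal{J}_{\lengthR_{j_0}}$, i.e. $|z^j-y_0^j|<2^{\lengthR_{j_0}+2}\rho$; adding the $\tfrac{\rho}{8}$ error and noting $2^{\lengthR_{j_0}+2}\rho+\tfrac{\rho}{8}<2^{\lengthR_{j_0}+3}\rho$ (since $\lengthR_{j_0}\ge 1$) yields \eqref{eq:zyj0}. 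Similarly, if $j\in\{j_0,\ldots,d\}\setminus\{i\}$, the lemma gives $|z^j-y_0^j|\in\mathcal{I}_{\lengthR_j}=\pm[2^{\lengthR_j}\rho,2^{\lengthR_j+2}\rho)$; the lower bound $2^{\lengthR_j}\rho-\tfrac{\rho}{8}\ge 2^{\lengthR_j-1}\rho$ and the upper bound $2^{\lengthR_j+2}\rho+\tfrac{\rho}{8}<2^{\lengthR_j+3}\rho$ give \eqref{eq:zyj}. (The coordinate $j=i$ is genuinely unconstrained here — the jump in direction $i$ can be arbitrarily large — which is why $i$ is excluded.)

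\medskip

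For part (i), the point is to upgrade the ``$\lengthR$''-description to a genuine dyadic description valid in \emph{all} coordinates, including $j=i$ and the small coordinates $j<j_0$. Since $z\in A_k^{e_i}\subset A_k$ and the sets $A_{k,\gamma}$ (over $\gamma\in\N_0^d$ with $\sum_j\gamma_j=k$) partition $A_k$, there is a unique such $\gamma$ with $z\in A_{k,\gamma}$, which by \autoref{def:D-sets} means $|z^j-y_0^j|\in[2^{\gamma_j}\rho,2^{\gamma_j+1}\rho)$ for every $j$. Adding the $\tfrac{\rho}{8}$ error: the upper bound gives $|z^j-y^j|<2^{\gamma_j+1}\rho+\tfrac{\rho}{8}\le 2^{\gamma_j+2}\rho$, and for the lower bound, when $\gamma_j\ge 1$ we get $|z^j-y^j|\ge 2^{\gamma_j}\rho-\tfrac{\rho}{8}\ge 2^{\gamma_j-1}\rho$, while when $\gamma_j=0$ the claimed lower bound $2^{\gamma_j-1}\rho=\tfrac{\rho}{2}$ need not hold — so the only subtlety is to check that in part (i) the relevant coordinates have $\gamma_j\ge 1$. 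This is where \autoref{lem:jAn}/\autoref{rem:nervig} re-enters: for $k\ge 1$ and $z\in A_k^{e_i}$, the coordinates $j\in\{j_0,\ldots,d\}\setminus\{i\}$ satisfy $\gamma_j\in\{\lengthR_j,\lengthR_j+1\}$ with $\lengthR_j\ge 1$, hence $\gamma_j\ge 1$; and since $\sum_j\gamma_j=k$ with all $\gamma_j\ge 0$, at least the remaining structure forces the estimate \eqref{eq:zyjg} to hold in the stated range — I expect one has to phrase \eqref{eq:zyjg} as holding for the particular $\gamma$ produced, with the lower bound being informative precisely when $\gamma_j\ge 1$ and trivially true (as ``$\ge 0$'' after interpreting $2^{\gamma_j-1}\rho$ at $\gamma_j=0$ loosely, or because the large coordinates dominate) otherwise. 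The main obstacle is thus purely bookkeeping: making sure the additive error $\tfrac{\rho}{8}$ is always absorbed into the next dyadic annulus and that the lower bounds are only asserted in coordinates where $\gamma_j$ (resp. $\lengthR_j$) is at least $1$; there is no analytic difficulty, only careful tracking of which coordinates are ``large'' versus ``small'' relative to the scale $s(j_0)=R_{j_0}/8$.
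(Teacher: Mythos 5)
Your overall approach — reduce everything to \autoref{lem:jAn} (for part (ii)) and to the defining property of the boxes $A_{k,\gamma}$ (for part (i)), then absorb the $\rho/8$ additive error into the next dyadic scale — is exactly the paper's approach, and part (ii) of your argument is correct as written.

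However, the ``subtlety'' you flag in part (i) is a phantom issue, and the paragraph you devote to it is where your argument goes astray. You claim that when $\gamma_j=0$ the lower bound $|z^j-y^j|\ge 2^{\gamma_j}\rho-\tfrac{\rho}{8}\ge 2^{\gamma_j-1}\rho$ ``need not hold.'' This is an arithmetic slip: the inequality
$2^{\gamma_j}\rho-\tfrac{\rho}{8}\ge 2^{\gamma_j-1}\rho$
is equivalent to $2^{\gamma_j-1}\ge \tfrac18$, i.e. $\gamma_j\ge -2$, and therefore holds for \emph{every} $\gamma_j\in\N_0$, including $\gamma_j=0$. Concretely, when $\gamma_j=0$ the definition of $D_k^{\gamma,\switch}$ gives $|z^j-y_0^j|\in[\rho,2\rho)$ (the interval is $[2^0,2^1)$, not $[0,2)$), so $|z^j-y^j|\ge\rho-\tfrac{\rho}{8}=\tfrac{7\rho}{8}>\tfrac{\rho}{2}=2^{\gamma_j-1}\rho$. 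The point is that for $k\ge 1$ the decomposition $D_k=\bigcup_\gamma D_k^\gamma$ contains no box touching a coordinate hyperplane — that region is entirely $D_0$ — so every coordinate of every $z\in A_k$ with $k\ge 1$ is already at distance $\ge\rho$ from $y_0$, and no case split on $\gamma_j$ is needed. Consequently the ensuing discussion invoking \autoref{rem:nervig}, the claim that one must ``check that the relevant coordinates have $\gamma_j\ge 1$,'' and the hedge that one should ``phrase \eqref{eq:zyjg} loosely'' at $\gamma_j=0$, are all unnecessary and should be deleted; as it stands that paragraph leaves the proof of part (i) incomplete, since you concede you have not actually verified the lower bound in the $\gamma_j=0$ case. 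Once the arithmetic is corrected, the paper's two-line chain of inequalities handles all $j$ uniformly.
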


\begin{proof}
	(i) Let $k\in\N$, $i\in\{1, \ldots, d\}$ and $z\in A_{k}^{e_i}$. Choose $\gamma\in\N_0^d$ such that $ z\in A_{k,\gamma}$. Then  for all $y\in B(y_0, \tfrac{\rho}{8})$ and $j\in\{1, \ldots, d\}$,
	\begin{align*}
	2^{\gamma_j-1}\rho\le 2^{\gamma_j}\rho-\rho/8
	&\le  |z^j-y_0^j|-|y_0^j-y^j|\le |z^j-y^j|
	\nn\\
	&\le |z^j-y_0^j|+|y_0^j-y^j|\le 2^{\gamma_j+1}\rho+\rho/8\le 2^{\gamma_j+2}\rho \,,
	\end{align*}
	which proves \eqref{eq:zyjg}. (ii) Let $k\in\N_0$, $i\in\{1, \ldots, d\}$, $z\in A_{k}^{e_i}$, $y\in B(y_0, \tfrac{\rho}{8})$ and $j\in \{1, \ldots, d\}$. If $j\in\{1,\ldots, j_0-1\}\backslash\{i\}$,  by \autoref{lem:jAn}
	\begin{align*}
	0\le |z^j-y^j|\le |z^j-y_0^j|+|y_0^j-y^j|\le 2^{\lengthR_{j_0}+2}\rho+\rho/8\le 2^{\lengthR_{j_0}+3}\rho \,.
	\end{align*}
	If $j\in\{j_0, \ldots, d\}\backslash\{i\}$, again by \autoref{lem:jAn},
	\begin{align*}
	2^{\lengthR_j-1}\rho\le 2^{\lengthR_j}\rho-\rho/8
	&\le  |z^j-y_0^j|-|y_0^j-y^j|\le |z^j-y^j|\nn\\
	&\le |z^j-y_0^j|+|y_0^j-y^j|\le 2^{\lengthR_j+2}\rho+\rho/8\le 2^{\lengthR_j+3}\rho. 
	\end{align*}
	This proves \eqref{eq:zyj}  and \eqref{eq:zyj0}.
\end{proof}

\medskip

Using \autoref{rem:nervig}, we can refine the decomposition \eqref{eq:dec-1} further.  For $a,b \in \N$, we set
\begin{align}
\TT{a}{b}:=\Tj{a}{b}\qquad\mbox{ and } \qquad \gam{a}{b}:=\gj{a}{b} \,. \label{def:TT-gam}
\end{align}
Note that we will make use of the abbreviation $\gam{a}{b}$ only much further below.
The following decomposition will be the main starting point in our proof.

\begin{align} \label{eq:dec-main} 
\begin{split}
&\E^{x}\left[\1_{\{\tau\le t/2\}}P_{t-\tau}f(X_{\tau})\right] \\
&=  \sum_{k=\TT{j_0}{d}-\lengthR_d}^{\infty}\Phi^d(k) + \sum_{k=\TT{j_0}{d}-\lengthR_{d-1}}^{\infty}\Phi^{d-1}(k) + \ldots + \sum_{k=\TT{j_0}{d}-\lengthR_{j_0}}^{\infty}\Phi^{j_0}(k)\\
& \qquad +\sum_{k=\TT{j_0}{d}}^{\infty}\Big(\Phi^{j_0-1}(k)+\ldots+\Phi^1(k)\Big)+\Phi(0) \\
&=
\sum_{i={j_0}}^d\cSi+\sum_{i={1}}^{j_0-1} \cTi+\Phi(0)
\end{split}
\end{align}

where $\cSi:=\sum_{k=\TT{j_0}{d}-\lengthR_i}^{\infty}\Phi^i(k)$
and $\cTi:=\sum_{k=\TT{j_0}{d}}^{\infty}\Phi^i(k)$.

\medskip

Before we continue, let us recall the setting that we are going to use for the rest of \autoref{sec:proof-propo}:

\medskip

\emph{\parbox{\textwidth}{
\begin{align*}
& \text{We assume } l\in \{0, 1,\ldots, d-1\}, \quad i_0\in\{1, \ldots, d-l\}, \quad j_0 \in \{i_0, \ldots,  d - l\}\,, \\
& x_0, y_0 \in \R^d \text{ satisfying } \mathcal{R}(i_0), \quad  s(j_0)=\tfrac{R_{j_0}}{8}, \quad \tau=\tau_{B(x_0, s(j_0))} \,. 
\end{align*}
}}

\medskip

Note that, because of \autoref{prop:xu2}, for $x\in B(x_0, \tfrac{\rho}{8})$ and $\tau=\tau_{B(x_0, s(j_0))}$,
\begin{align}\label{eq:Plg}
\Pp^x(\tau\le t/2, X_{\tau}\in A_k^{e_i})\le \Pp^x(\tau\le t/2)\le c {t}R_{j_0}^{-\alpha}.
\end{align}

\medskip

Our 
aim is to estimate $\Phi(0)$, $\cSi$, $i\in\{j_0, \ldots, d\}$ and $\cTi$, $i\in\{1,\ldots,j_0-1\}$.

Given $l\in \{0,\ldots, d-1\}$, consider $j_0\in\{i_0,\ldots,d-l\}$ and let
\begin{align}
F_{j_0}(l):=\prod_{j=j_0}^{d-l}\left(\tfrac{t}{R_j^\alpha}\right)^q\prod_{j=d-l+1}^d\left(\tfrac{t}{R_j^{\alpha}}\right)^{1+\alpha^{-1}}\,.\label{eq:prodlg}
\end{align}
Since $\tfrac{t}{R_j^\alpha}=2^{-\lengthR_j\alpha}$, note that $F_{j_0}(l)=2^{-\TT{j_0}{d-l}\alpha q}\cdot 2^{-\TT{d-l+1}{d}\alpha(1+\alpha^{-1})}$.

\medskip

\subsection*{\bf Estimates of \texorpdfstring{$\Phi(0)$}{F0}.}

We will apply the decomposition $\Phi(0)=\sum_{i=1}^d\Phi^i(0)$. Let us fix $i \in \{1,\ldots,d\}$. Our first goal is the derivation of an upper bound for 
\begin{align}\label{eq:Pf-z}
P_{t-\tau}f(z)= \int_{B(y_0, \frac{\rho}{8})} p_{t-\tau}(z, y) f(y) dy 
\end{align}
for $z\in A_0^{e_i}$, $t/2\le t-\tau\le t$ and  $f$ is a non-negative Borel function on $\Rd$ supported in $B(y_0, \tfrac{\rho}{8})$. The goal is achieved once we have proved \eqref{eq:Hlq0g3}, \eqref{eq:Hlq0g1} and \eqref{eq:Hlq0g2}.

\medskip

Fix $y\in B(y_0, \rho/8)$. 
Note that trivially $\left(\tfrac{t}{|z^j-y^j|^\alpha}\wedge 1\right)\le 1$ for any index $j$. In the following we make use of \eqref{eq:zyj0} and \eqref{eq:zyj}, i.e.,  
\begin{align*}
\begin{cases}
|z^j-y^j|\in [0, 2^{\lengthR_{j_0}+3}\rho)&\mbox{ if } j\in \{1, \ldots, j_0-1\}\backslash \{i\},\\
|z^j-y^j|\in [2^{\lengthR_{j}-1}\rho, 2^{\lengthR_{j}+3}\rho)&\mbox{ if } j\in \{j_0, \ldots, d\}\backslash \{i\} \,.
\end{cases}
\end{align*}

Set $r_j:=|z^j-y^j|$ for $j\in \{1, \ldots, d\}$. We will estimate $p_{t-\tau}(z, y)$ in dependence of the size of $r_i$. Given $r_i$ we choose $m \in \{0, \ldots, \lengthR_{i}\}$ as follows:

\begin{enumerate}
	\item We choose $m=0$ if $r_i \leq \rho$, which implies $r_i \leq r_{d-l}$.
	\item We choose $m = \lengthR_{i}$ if $r_i > \rho 2^{\lengthR_i}$.
	\item We choose $m\in\{1, \ldots, \lengthR_i - 1\}$ such that $\rho 2^{m-1} < r_i \leq \rho 2^{m+1}$ if  $\rho < r_i \leq \rho 2^{\lengthR_i}$.
\end{enumerate}

\medskip

We consider different cases for the size of the index $i$.

\subsubsection*{Case 1: $d-l < i$} Condition $\HHq{q}{l}$
 (to be precise $\HHq{q}{l}'$)
yields for $t/2\le t-\tau\le t$
\begin{align} \label{eq:pzy-case1}
\begin{split}
p_{t-\tau}(z, y)
\le \ c t^{-d/\alpha}
&\prod_{j\in \{1, \ldots, d-l-1\}}\left(\frac{t}{r_j^\alpha}\wedge 1\right)^q
\prod_{j\in \{d-l+1, \ldots, d\}\backslash\{i\}}\left(\frac{t}{r_j^\alpha}\wedge 1\right)^{1+\alpha^{-1}} \\
&\qquad
\cdot
 \left(\frac{t}{(r_{d-l}\wedge r_i)^\alpha}\wedge 1\right)^q 
	\left(\frac{t}{(r_{d-l}\vee r_i)^\alpha}\wedge 1\right)^{1+\alpha^{-1}}\,.
\end{split}
\end{align}

We want to estimate the term on the right-hand side in \eqref{eq:pzy-case1} from above. The case $m=0$ corresponds to the case $r_i \leq r_{d-l}$. The case $m = \lengthR_{i}$ corresponds to $r_i > \rho 2^{\lengthR_i}$, which implies $2^{3} r_i > r_{d-l}$ in this case. The above choice of $m \in \{1, \ldots, \lengthR_{i}-1\}$ corresponds to $\rho 2^{m} < r_i \leq \rho 2^{m+1}$. Thus, for $m < \lengthR_{d-l}$ we have $r_{d-l} \geq \rho 2^{\theta_{d-l}-1} \geq \rho 2^{m} \geq \tfrac12 r_i$, and for $m \geq \lengthR_{d-l}$ we have $r_{d-l} < \rho 2^{\theta_{d-l}+3} \leq 8 \rho 2^{m} \leq 8 r_i$. We use these inequalities to estimate $r_{d-l}\wedge r_i$ and $r_{d-l}\vee r_i$ in \eqref{eq:pzy-case1} and obtain

\begin{align}\label{eq:n11-12-new}
p_{t-\tau}(z, y)
\le  \ ct^{-d/\alpha}
&\begin{cases}
2^{-\left(m+ \sum_{j=j_0}^{d-l-1}\lengthR_j\right)\alpha q}
2^{\left( \lengthR_{i} - \left(\sum_{j=d-l}^{d}\lengthR_j\right) \right) \alpha (1+\alpha^{-1})}
&\text{ if } 0 \leq m < \lengthR_{d-l}\,,\\
2^{-\sum_{j=j_0}^{d-l}\lengthR_j\alpha q}
2^{\left(\lengthR_{i} -  m - \left( \sum_{j=d-l+1}^{d}\lengthR_j\right) \right)\alpha (1+\alpha^{-1})} 
&\text{ if }  \lengthR_{d-l}\le m \le \lengthR_{i} \,, \\
\end{cases}
\end{align}
where one might need to change the constant $c$ appropriately. Equivalently, we write  
\begin{align}\label{eq:n11-12-new-new}
p_{t-\tau}(z, y)
&\le  \ ct^{-d/\alpha} 2^{-\left(\sum_{j=j_0}^{d-l-1}\lengthR_j \right) \alpha q} 2^{\left( \lengthR_{i} - \left(\sum_{j=d-l+1}^{d}\lengthR_j\right) \right) \alpha (1+\alpha^{-1})} \nonumber \\
& \qquad \times 
\begin{cases}
2^{- \alpha \left( m q + \lengthR_{d-l} (1+\alpha^{-1})\right)}
&\text{ if } 0 \leq m < \lengthR_{d-l}\,,\\
2^{-\alpha \left(\lengthR_{d-l}  q  + m (1+\alpha^{-1})\right) } 
&\text{ if }  \lengthR_{d-l}\le m \le \lengthR_{i} \,, \\
\end{cases}
\end{align}

\medskip

\subsubsection*{Case 2: $j_0 \leq i \le d-l$} 

Here we deduce from $\HHq{q}{l}$ (to be precise $\HHq{q}{l}'$) for $t/2\le t-\tau\le t$ the inequality 

\begin{align}\label{eq:pzy-case2}
\begin{split}
p_{t-\tau}(z, y)
\le c t^{-d/\alpha}&\prod_{j\in \{1, \ldots, d-l\}\backslash\{i\}}\left(\frac{t}{r_j^\alpha}\wedge 1\right)^q
\prod_{j\in \{d-l+2, \ldots, d\}}\left(\frac{t}{r_j^\alpha}\wedge 1\right)^{1+\alpha^{-1}}
\\
&\qquad\cdot
 \left(\frac{t}{(r_{d-l+1}\wedge r_i)^\alpha}\wedge 1\right)^q
	\left(\frac{t}{(r_{d-l+1}\vee r_i)^\alpha}\wedge 1\right)^{1+\alpha^{-1}}.
\end{split}
\end{align}

Again, we estimate $p_{t-\tau}(z, y)$ from above in dependence of the size of $r_i$ resp. the choice of $m$. We obtain
\begin{align}\label{eq:n21-22-new}
p_{t-\tau}(z, y)
\le  \ ct^{-d/\alpha}
&
2^{-\alpha q \left(-\lengthR_i+m + \sum_{j=j_0}^{d-l}\lengthR_j\right)}
2^{-\alpha (1+\alpha^{-1})\sum_{j=d-l+1}^{d}\lengthR_j}
&\text{ if } 0 \leq m \leq \lengthR_{i}\,.
\end{align}

\medskip

\subsubsection*{Case 3: $i< j_0 $} 

In this case we again use \eqref{eq:pzy-case2} in order to obtain 
\begin{align}\label{eq:n3}
p_{t-\tau}(z, y)\le &\ c t^{-d/\alpha}
2^{-\sum_{j=j_0}^{d-l}\lengthR_j\alpha q}
2^{-\sum_{j=d-l+1}^{d}\lengthR_j\alpha (1+\alpha^{-1})} \quad \text{ if } 0 \leq m \leq \lengthR_{i}\,.
\end{align}

\medskip

\begin{remark*}
We will apply the aforementioned upper bounds for $p_{t-\tau}(z, y)$ in the three cases for different choices of $m \in \{0, \ldots, \lengthR_{i}\}$. Note that these bounds are comparable for different choices $m_1, m_2 \in \{0, \ldots, \lengthR_{i}\}$ with $|m_2-m_1| \leq 6$ or any other fixed number. This observation will used without further mentioning. 
\end{remark*}

\medskip

We can now approach our first goal, the upper bound of $P_{t-\tau}f(z)$ in \eqref{eq:Pf-z} for $t/2\le t-\tau\le t$ and $z\in A_0^{e_i}$.  Let us consider four different cases:

\subsubsection*{Case (i): $i<j_0$} The case is simple because \eqref{eq:n3} implies 
\begin{align}\label{eq:Hlq0g3}
P_{t-\tau}f(z)
\le \, c\, t^{-d/\alpha} \|f\|_1F_{j_0}(l).
\end{align}

\subsubsection*{Case (ii): $i\ge j_0$ and $|z^i-y_0^i|< \frac12 \rho$} In this case $r_i = |z^i-y^i| \leq |z^i-y_0^i| + |y_0^i-y^i| \leq \rho$ for $y \in B(y_0, \rho/8)$ and one can apply the case $m=0$ in \eqref{eq:n11-12-new} and \eqref{eq:n21-22-new}. We obtain
\begin{align}
P_{t-\tau}f(z)
\le \, c\, t^{-d/\alpha} \|f\|_1F_{j_0}(l)
\begin{cases}
2^{\lengthR_{d-l}\alpha q}2^{(-\lengthR_{d-l}+\lengthR_i)\alpha (1+\alpha^{-1})}
&\mbox{if $d-l< i$}\, ,\\
2^{\lengthR_i\alpha q}
&\mbox{if $ j_0\le i< d-l$}\,.
\end{cases}\label{eq:Hlq0g1}
\end{align} 

\subsubsection*{Case (iii): $i\ge j_0$, $\frac12 \rho\le |z^i-y_0^i|< 2^{\lengthR_i +1}\rho$} Given $y \in B(y_0, \rho/8)$ and $r_i = |z^i-y^i|$ there is $m\in\{1, \ldots, \lengthR_i-1\}$ such that $2^{m-3} \rho\le r_i <  2^{m+3}\rho$. We apply the corresponding case in \eqref{eq:n11-12-new} and \eqref{eq:n21-22-new} together with the remark above. We deduce
\begin{align}\label{eq:Hlq0g2}
&P_{t-\tau}f(z)=\int_{B(y_0, \tfrac{\rho}{8})} p_{t-\tau}(z, y)f(y) dy\\
\le &\, c\, t^{-d/\alpha} \|f\|_1F_{j_0}(l)
\begin{cases}
2^{-(m-\lengthR_{d-l})\alpha q}2^{-(\lengthR_{d-l}-\lengthR_i)\alpha (1+\alpha^{-1})}
&\mbox{if $d-l< i$ and $m< \lengthR_{d-l}$},\\
2^{-(m-\lengthR_i)\alpha(1+\alpha^{-1})}&\mbox{if $d-l< i$ and $\lengthR_{d-l}\le m< \lengthR_i$},\\
2^{-(m-\lengthR_i)\alpha q}
&\mbox{if $ j_0\le i\le  d-l$ and $1\le  m< \lengthR_i$} \,.
\end{cases}\nn
\end{align}

\medskip

\subsubsection*{Case (iv): $i\ge j_0$, $|z^i-y_0^i|\ge 2^{\lengthR_i +1}\rho$} In this case $r_i = |z^i-y^i| \geq |z^i-y_0^i| - |y_0^i-y^i| \geq 2^{\lengthR_i +1}\rho - \frac18 \rho \geq 2^{\lengthR_i}\rho$ for $y \in B(y_0, \rho/8)$. Thus we can apply  
the case $m=\theta_i$ and conclude \eqref{eq:Hlq0g3} from \eqref{eq:n11-12-new}  and \eqref{eq:n21-22-new}.

\medskip

Finally, we can estimate $\Phi(0)=\sum_{i=1}^d\Phi^i(0)$, which is an important step in the estimate of $\E^{x}\left[\1_{\{\tau\le t/2\}}P_{t-\tau}f(X_{\tau})\right]$ in \eqref{eq:dec-main}. Let us fix $i \in \{1, \ldots, d\}$ and estimate $\Phi^i(0)$. The ideas will later be used in an analogous manner in order to estimate $\Phi^i(k)$ for $k \geq 1$.

\medskip

For $i < j_0$, Case (i) and \eqref{eq:Plg} above implies
\begin{align}\label{eq:s02g-part1}
\E^{x} & \left[  \1_{\{\tau\le t/2\}} \1_{\{X_{\tau}\in A_0^{e_i} \}} P_{t-\tau}f(X_{\tau}) \right] \nn\\
&\quad \le \sup_{z\in A_0^{e_i}}P_{t-\tau}f(z)\cdot
\E^x\left[ \1_{\{\tau\le t/2\}} \1_{\{X_{\tau}\in A_0^{e_i} \}} \right]\nn\\
& \quad \le\, c t^{-d/\alpha} \|f\|_1 F_{j_0}(l)
2^{-\lengthR_{j_0}\alpha}
\le \, c t^{-d/\alpha} \|f\|_1 F_{j_0+1}(l)  2^{-\lengthR_{j_0}\alpha(1+q)}.
\end{align}

Recall $F_{d-l+1}(l) = \prod_{j=d-l+1}^d\left(\tfrac{t}{R_j^{\alpha}}\right)^{1+\alpha^{-1}}$ because of $\prod_{j=d-l+1}^{d-l} (\ldots) = 1$ by definition.

\medskip

Now, let us assume $i \geq j_0$. 

\medskip

First, we observe that due to Case (iv) and \eqref{eq:Plg} above, we have
\begin{align}\label{eq:s02g-part 2}
\E^{x}&\left[\1_{\{\tau\le t/2\}}\1_{\{X_{\tau}\in A_0^{e_i}, |X_{\tau^i-}-y_0^i|\ge 2^{\lengthR_i +1}\rho\}} P_{t-\tau}f(X_{\tau})\right] \nn\\
&\quad \le\,c t^{-d/\alpha} \|f\|_1 F_{j_0}(l)
2^{-\lengthR_{j_0}\alpha}
\le \, c t^{-d/\alpha} \|f\|_1 F_{j_0+1}(l)  2^{-\lengthR_{j_0}\alpha(1+q)}.
\end{align} 

For the remaining two cases we need some auxiliary estimate. The idea is to decompose the event $X_{\tau}\in A_0^{e_i}$ into several disjoint events in dependence on the position of $X^i_{\tau-}$, i.e., the point just before the jump out of $B(x_0, s(j_0))$. 
 We define $ I^i_{0, 0}:=\{\ell \in \R:|\ell - y_0^i| \in [0, 2\rho) \}$ and $ I^i_{0, m}:=\{\ell \in \R:|\ell - y_0^i| \in  [2^{m-1}\rho, 2^{m+1}\rho)\}$  for $ m\in \{1, \ldots, \lengthR_i-1\}$. Then, by the L\'{e}vy system formula
\begin{align}\label{eq:LSIg0}
\E^x & \left[ \1_{\{\tau\le t/2\}} \1_{\{X_{\tau}\in A_0^{e_i}, 2^{m-1}\rho\le |X_{\tau-}^i-y_0^i|< 2^{m+1}\rho \}} \right] \nn \\ 
&\leq \E^x\left[\int_{0}^{t/2\wedge \tau}\int_{ I^i_{0, m}}
\frac{1}{|X_{s-}^i-\ell |^{1+\alpha}} d \ell d s \right]\le \frac{c t}{R_i^{1+\alpha}}\cdot
2^{m}\rho
\end{align}
The estimate \eqref{eq:LSIg0} follow easily by considering $w\in B(x_0, s(j_0))$ and $z\in \Rd$ satisfying $|z^i-y_0^i|\in [2^{\gamma_i}\rho, 2^{\gamma_i+1}\rho)$ with $\gamma_i < \lengthR_i$ or $|z^i-y_0^i| \in [0,  2\rho) \cup [2^{m-1}\rho, 2^{m+1}\rho)$ for some $m < \lengthR_i$. Then for $i\ge {j_0}$, 
\begin{align*} 
|w^i-z^i|&\ge |x_0^i-y_0^i|-|w^i-x_0^i|-|z^i-y_0^i| \ge 5R_i/4-R_i/8-2^{\lengthR_i}\rho= R_i/8 \,,
\end{align*}
which implies the second inequality of \eqref{eq:LSIg0}.

\medskip

Now, we can proceed with Case (ii). 
By \eqref{eq:Hlq0g1} and \eqref{eq:LSIg0}, we obtain 
\begin{align}\label{eq:s01g1}
\E^{x} & \left[\1_{\{\tau\le t/2\}}\1_{\{X_{\tau}\in A_0^{e_i},\ |X_{\tau^i-}-y_0^i|\le \rho/2\}}P_{t-\tau}f(X_{\tau})\right]\nn\\
& \le \, c t^{-d/\alpha} \|f\|_1 F_{j_0}(l)
\begin{cases}
 2^{\lengthR_{j_0}\alpha (q-1-\alpha^{-1})}
&\mbox{  if $d-l< i$  }\,,
\\
2^{-\lengthR_i\alpha(-q+1+\alpha^{-1})}&\mbox{  if $ j_0\le i\le  d-l$  }
\end{cases}\nn\\
& \le\, c t^{-d/\alpha} \|f\|_1 F_{j_0+1}(l) 
\begin{cases}
2^{-\lengthR_{j_0} \alpha(1+\alpha^{-1})}&\mbox{  if $d-l< i$  }\,,\\
2^{-\lengthR_{j_0} \alpha(1+\alpha^{-1})}&\mbox{  if $ j_0\le i\le  d-l$ }.
\end{cases}
\end{align}
The first inequality holds in the case $d-l< i$ since $q\le 1+\alpha^{-1}$ and $\lengthR_{j_0}\le\lengthR_{d-l}$. Note that $j_0\in \{i_0,\ldots,  d-l\}$ implies  $\lengthR_{j_0}\le\lengthR_{d-l}$. The last inequality holds in the case $j_0\le i\le  d-l$ because $q\le 1+\alpha^{-1}$ and $\lengthR_{i}\ge \lengthR_{j_0}$.

\medskip

It remains to consider Case (iii). By 
\eqref{eq:Hlq0g2} and \eqref{eq:LSIg0}, 
\begin{align*}
&\E^{x}\left[\1_{\{\tau\le t/2\}}\1_{\{X_{\tau}\in A_0^{e_i}, 2^{m-1}\rho\le |X_\tau^i-y_0^i|< 2^{m+1}\rho\}}P_{t-\tau}f(X_{\tau})\right]\\
\le \,&c t^{-d/\alpha} \|f\|_1 F_{j_0}(l)
\begin{cases}
2^{-m\alpha (q-\alpha^{-1})}2^{-\lengthR_{d-l}\alpha (-q+1+\alpha^{-1})}
\qquad&\mbox{if $d-l< i$ and $m< \lengthR_{d-l}$}\,,
\\
2^{-m\alpha} 
&\mbox{if $d-l< i$ and $ \lengthR_{d-l}\le m <\lengthR_i$}\,,
\\
2^{-m\alpha (q-\alpha^{-1})}2^{-\lengthR_{i}\alpha (-q+1+\alpha^{-1})}&\mbox{if $ j_0\le i\le  d-l$ and $1\le  m< \lengthR_i$},
\end{cases}\nn\\
\le\,& c t^{-d/\alpha} \|f\|_1 F_{j_0+1}(l) 
\begin{cases}
2^{-m\alpha (q-\alpha^{-1})}2^{-\lengthR_{j_0}\alpha q}2^{-\lengthR_{d-l}\alpha (-q+1+\alpha^{-1})}
&\mbox{if $d-l< i$ and $m< \lengthR_{d-l}$}\,,
\\
2^{-\lengthR_{j_0}\alpha (1+q)} 
&\mbox{if $d-l< i$ and $ \lengthR_{d-l}\le m <\lengthR_i$}\,,
\\
2^{-m\alpha (q-\alpha^{-1})}2^{-\lengthR_{j_0}\alpha q}2^{-\lengthR_{i}\alpha (-q+1+\alpha^{-1})}&\mbox{if $ j_0\le i\le  d-l$ and $1\le  m< \lengthR_i$}\,.
\end{cases}\nn
\end{align*}
Since 
$2^{-m\alpha (q-\alpha^{-1})}\le 1 $ for $q>\alpha^{-1}$ and
$2^{-m\alpha (q-\alpha^{-1})}\le 2^{-(\lengthR_{i}\wedge \lengthR_{d-l})\alpha (q-\alpha^{-1})}$ for $q<\alpha^{-1}$ and $m<\lengthR_{i}\wedge \lengthR_{d-l}$,
\begin{align}\label{eq:s01g4}
&\E^{x}\left[\1_{\{\tau\le t/2\}}\1_{\{X_{\tau}\in A_0^{e_i}, 2^{m-1}\rho\le |X_\tau^i-y_0^i|< 2^{m+1}\rho\}}P_{t-\tau}f(X_{\tau})\right]\nn\\
&\quad\le\, c t^{-d/\alpha} \|f\|_1 F_{j_0+1}(l) 
\begin{cases}
2^{-\lengthR_{j_0} \alpha(1+q)}&\mbox{ if } q<\alpha^{-1}\,,\\
2^{-\lengthR_{j_0} \alpha(1+\alpha^{-1})}&\mbox{ if } q>\alpha^{-1}.
\end{cases} 
\end{align}

Therefore, by \eqref{eq:s02g-part1} -- \eqref{eq:s01g4}, we obtain 
\begin{align}\label{eq:S0g}
\Phi(0)=\sum_{i=1}^d\Phi^i(0)\le\, c t^{-d/\alpha} \|f\|_1 F_{j_0+1}(l) 
\begin{cases}
2^{-\lengthR_{j_0} \alpha(1+q)}&\mbox{ if } q<\alpha^{-1}\,,\\
2^{-\lengthR_{j_0} \alpha(1+\alpha^{-1})}&\mbox{ if } q>\alpha^{-1}.
\end{cases} 
\end{align}

\subsection*{\bf Estimates of \texorpdfstring{$\cSi:=\sum_{k=\TT{j_0}{d}-\lengthR_i}^{\infty}\Phi^i(k)$ for $i\in \{j_0, \ldots, d\}$}{Si}.}

In the sequel we will make use of the following. Let $x\in B(x_0, \tfrac{\rho}{8})$ and $i\ge {j_0}$. Let us consider two cases. (1) For $k\ge 1$ and $\gamma_i < \theta_i$, we set  
$I_k^i:=\{\ell \in\R:|\ell - y_0^i|\in [2^{\gamma_i}\rho, 2^{\gamma_i+1}\rho)\}$ with $\gamma_i$ as in \autoref{def:D-sets}. In this case,  
\begin{align}\label{eq:LSIg}
&\E^x\left[\int_{0}^{t/2\wedge \tau}\int_{I_k^i}
\frac{1}{|X_s^i-\ell |^{1+\alpha}} d \ell d s \right]\le \frac{c t}{R_i^{1+\alpha}}\cdot
2^{\gamma_i}\rho \,.
\end{align}
The proof of \eqref{eq:LSIg} is analogous to the one of  \eqref{eq:LSIg0}. (2) For $k\ge 1$ and $\gamma_i \ge \theta_i$, we will use \eqref{eq:Plg}.
 
\medskip

Let $l\in\{0,1,\ldots, d-1\}$, $i_0\in\{1,\ldots, d-l\}$ and $j_0\in\{i_0,\ldots, d-l\}$. 
We follow the strategy of the estimate of $\Phi(0)$. 	For $z\in A_k^{e_i}$ and $y\in B(y_0, \rho/8)$, let $r_j:=|z^j-y^j|$ for $j\in \{1, \ldots, d\}$. Note that  $2^{\gamma_{j}-1}\rho\le r_j<2^{\gamma_{j}+2}\rho$ for $j\in \{1, \ldots, j_0-1\}\cup\{i\}$ and $2^{\lengthR_{j}-1}\rho\le r_j<  2^{\lengthR_{j}+3}\rho$ for $j\in \{j_0, \ldots, d\}\backslash\{i\}$ by \eqref{eq:zyjg} and \eqref{eq:zyj}. 
Hence $\HHq{q}{l}$ (to be precise $\HHq{q}{l}'$) 
yields for $t/2\le t-\tau\le t$ and $i\le d-l$,
\begin{align*}
p_{t-\tau}(z, y)
&\le c t^{-d/\alpha}
\prod_{j\in \{1, 2, \ldots, d-l\}}\left(\frac{t}{r_j^\alpha}\wedge 1\right)^q
\prod_{j\in \{d-l+1, \ldots, d\}}\left(\frac{t}{r_j^\alpha}\wedge 1\right)^{1+\alpha^{-1}}\nn\\
&\le c t^{-d/\alpha}
\prod_{j\in \{1, 2, \ldots, d-l\}}\left(\frac{t}{r_j^\alpha}\right)^q
\prod_{j\in \{d-l+1, \ldots, d\}}\left(\frac{t}{r_j^\alpha}\right)^{1+\alpha^{-1}}\\
&\le c t^{-d/\alpha}2^{-(\sum_{j=1}^{j_0-1}\gamma_j+\gamma_i)\alpha q}
2^{-(\sum_{j=j_0}^{d-l}\lengthR_j-\lengthR_i)\alpha q}
2^{-\left(\sum_{j=d-l+1}^{d}\lengthR_j\right)\alpha (1+\alpha^{-1})} \,.
\end{align*}

We note that 
$\gamma=(\gamma_1,\cdots,\gamma_d)\in \N_0^d$ is determined once we fix
$A_k^{e_i}$ and $B(y_0, \rho/8)$. Note that it is independent
of the choice of the elements $z$ and $y$. 

\medskip

The case $i > {d-l}$ can be dealt with similarly. Altogether, by \eqref{eq:zyjg}--\eqref{eq:zyj}, 
$\HHq{q}{l}$ 
yields the following estimate for $t/2\le t-\tau\le t$ and $z\in A_k^{e_i}\cap A_k^\gamma$: 
\begin{align}
&P_{t-\tau}f(z)=\int_{B(y_0, \frac{\rho}{8})} p_{t-\tau}(z, y) f(y) dy\nn\\
&\le c t^{-d/\alpha} \|f\|_1
\begin{cases}
2^{-(\gam{1}{j_0-1}+\TT{j_0}{d-l}+\gamma_i-\lengthR_i)\alpha q}\cdot 2^{-\TT{d-l+1}{d}\alpha(1+\alpha^{-1})}
&\mbox{ if $i\le {d-l}$,}\\
2^{-(\gam{1}{j_0-1}+\TT{j_0}{d-l})\alpha q}2^{-(\TT{d-l+1}{d}+\gamma_i-\lengthR_i)\alpha(1+\alpha^{-1})}
&\mbox{ if $i>d-l$}
\end{cases}\label{eq:Hlqg1}\\
&\le c t^{-d/\alpha} \|f\|_1\cdot F_{j_0}(l)
\begin{cases}
2^{-(\gam{1}{j_0-1}+\gamma_i-\lengthR_i)\alpha q}
&\mbox{ if $\gamma_i< \lengthR_{i}$,}\\
2^{-\gam{1}{j_0-1}\alpha q} 2^{-(\gamma_i-\lengthR_i)\alpha(1+\alpha^{-1})}  &\mbox{  if 
$\gamma_i\ge \lengthR_{i}$}
\end{cases}\label{eq:Hlqg}
\end{align}
where $F_{j_0}(l)$	is defined in \eqref{eq:prodlg}. Note that we have used $\gam{a}{b}$ as defined in \eqref{def:TT-gam}. Here the last inequality is due to the fact 
$-(\gamma_i-\lengthR_i)\alpha q<-(\gamma_i-\lengthR_i)
\alpha(1+\alpha^{-1})$ for $\gamma_i< \lengthR_{i}$ 
and the opposite inequality holds for 
$\gamma_i\ge \lengthR_{i}$ (note that 
$0\le q\le 1+\alpha^{-1}$).

\medskip

With regard to the definition of $A_k$, recall $k=\sum_{j=1}^d \gamma_j$. For $z\in A_k^{e_i}$, there exists $\gamma\in \N_0^d$ such that $z\in A_{k, \gamma}$ with  $\gamma_j=\lengthR_j \mbox{ or }\lengthR_j+1$ for $j\in\{j_0, \ldots, d\}\backslash\{i\}$ (see, \autoref{rem:nervig}). Therefore, 
\begin{align}\label{eq:gig}
\gam{1}{j_0-1}+\TT{j_0}{d}-\lengthR_i+\gamma_i\le k\le \gam{1}{j_0-1}+\TT{j_0}{d}-\lengthR_i+\gamma_i+d.
\end{align}

Now decompose $\cSi$ as follows:
\begin{align*}
\cSi
=
\sum_{k= \TT{j_0}{d}-\lengthR_i}^{\infty} 
	\Phi^i(k)1_{\{\gamma_i< \lengthR_i\}}+\sum_{k= \TT{j_0}{d}-\lengthR_i}^{\infty} 
	\Phi^i(k)1_{\{\gamma_i\ge \lengthR_i\}}
			=:\I+\II.
\end{align*}

By \eqref{eq:LSd}, \eqref{eq:LSIg} and \eqref{eq:Hlqg},
\begin{align*}
&\I=\,
	\sum_{k= \TT{j_0}{d}-\lengthR_i}^{\infty}
	\E^{x}\left[\1_{\{\tau\le t/2\}}\1_{\{X_{\tau}\in A_k^{e_i}\}}P_{t-\tau}f(X_{\tau})\right]1_{\{\gamma_i< \lengthR_i\}}\nn\\
\le \,&c t^{-d/\alpha} \|f\|_1 F_{j_0}(l)
\sum_{k= \TT{j_0}{d}-\lengthR_i}^{\infty} 
2^{-(\gam{1}{j_0-1}+\gamma_i-\lengthR_i)\alpha q} \cdot \E^x\left[\int_{0}^{t/2\wedge \tau}\int_{I_k^i}
\frac{1_{\{\gamma_i< \lengthR_i\}}}{|X_s^i-z^i|^{1+\alpha}}dz^ids\right]\nn\\
\le \,&c t^{-d/\alpha} \|f\|_1 F_{j_0}(l)
\sum_{k= \TT{j_0}{d}-\lengthR_i}^{\infty} 2^{-(\gam{1}{j_0-1}+\gamma_i-\lengthR_i)\alpha q} \cdot2^{\gamma_i} 2^{-\lengthR_i\alpha(1+\alpha^{-1})}1_{\{\gamma_i< \lengthR_i\}}.
\end{align*}

Note that $\gam{1}{j_0-1}+\gamma_i-\lengthR_i \ge k-\TT{j_0}{d} -d$ by the second inequality of \eqref{eq:gig} and 
$\gamma_i-\lengthR_i\le k-\gam{1}{j_0-1}-\TT{j_0}{d}\le k-\TT{j_0}{d}$ by the first inequality of \eqref{eq:gig}.
So when
$\gamma_i< \lengthR_i$,
$$2^{-(\gam{1}{j_0-1}+\gamma_i-\lengthR_i)\alpha q} 2^{\gamma_i-\lengthR_i}2^{-\lengthR_i\alpha}\le 
\begin{cases}
2^{d\alpha q}\cdot  2^{-(k-\TT{j_0}{d})(\alpha q-1)}2^{-\lengthR_i\alpha}\\
2^{d\alpha q}\cdot2^{-(k-\TT{j_0}{d})\alpha q}2^{-\lengthR_i\alpha}.
\end{cases}$$ 
Therefore, 
\begin{align}\label{eq:s1g}
\I\le\, &c t^{-d/\alpha} \|f\|_1 F_{j_0}(l)
\left(\sum_{k= \TT{j_0}{d}-\lengthR_i}^{\TT{j_0}{d}-1}2^{-(k-\TT{j_0}{d})(\alpha q-1)}2^{-\lengthR_i\alpha}+\sum_{k= \TT{j_0}{d}}^{\infty}2^{-(k-\TT{j_0}{d})\alpha q}2^{-\lengthR_i\alpha}\right)\nn\\
\le\, &c t^{-d/\alpha} \|f\|_1 F_{j_0}(l)
\begin{cases}
2^{-\lengthR_i\alpha}&\mbox{ if } q<\alpha^{-1}\,,\\
2^{-\lengthR_i\alpha(-q+\alpha^{-1}+1)}+2^{-\lengthR_i\alpha}&\mbox{ if } q>\alpha^{-1}.
\end{cases}
\end{align}

For $\II$, applying \eqref{eq:Plg}, \eqref{eq:Hlqg} 
and \eqref{eq:gig}, we have that 
\begin{align}\label{eq:s23ag}
\II\le \,& c t^{-d/\alpha} \|f\|_1
F_{j_0}(l)\sum_{k= \TT{j_0}{d}}^{\infty}
2^{-(\gam{1}{j_0-1}+\gamma_i-\lengthR_i)\alpha q} \cdot \Pp^x(\tau\le t/2, X_{\tau}\in A_k^{e_i})1_{\{\gamma_i\ge \lengthR_i\}}\nn\\
\le \,&c t^{-d/\alpha} \|f\|_1 F_{j_0}(l)
\sum_{k= \TT{j_0}{d}}^{\infty} 2^{-(\gam{1}{j_0-1}+\gamma_i-\lengthR_i)\alpha q} \cdot
2^{-\lengthR_{j_0}\alpha}\nn\\
 \le  \,&c t^{-d/\alpha} \|f\|_1   F_{j_0}(l)
\sum_{k=\TT{j_0}{d}}^{\infty}
2^{-(k-\TT{j_0}{d} -d)\alpha q}2^{-\lengthR_{j_0}\alpha}\le\, c t^{-d/\alpha} \|f\|_1 F_{j_0}(l) \cdot  2^{-\lengthR_{j_0}\alpha}.
\end{align}

Since $j\to \theta_j$ is increasing and $q\le 1+\alpha^{-1}$,  \eqref{eq:s1g}--\eqref{eq:s23ag} imply that for any $i\in \{j_0,\ldots, d\}$, 
\begin{align}\label{eq:Sig}
\cSi\le \,&c t^{-d/\alpha} \|f\|_1   F_{j_0}(l)
\begin{cases}
2^{-\lengthR_i\alpha}
+2^{-\lengthR_{j_0}\alpha}\le 2^{-\lengthR_{j_0}\alpha}&\mbox{ if } q<\alpha^{-1}\,,\\
2^{-\lengthR_i\alpha(-q+\alpha^{-1}+1)}
+2^{-\lengthR_{j_0}\alpha}&\mbox{ if } q>\alpha^{-1}
\end{cases}\nn\\
\le \,&c t^{-d/\alpha} \|f\|_1   F_{j_0+1}(l)
\begin{cases}
2^{-\lengthR_{j_0}\alpha(1+q)}&\mbox{ if } q<\alpha^{-1}\,,\\
2^{-\lengthR_{j_0}\alpha(1+\alpha^{-1})}&\mbox{ if } q>\alpha^{-1}.
\end{cases}
\end{align}

\medskip

\subsection*{\bf Estimates of \texorpdfstring{$\cTi$ for $i\in\{1,2,\ldots,j_0-1\}$}{Ti}.} 

Let $l\in\{0,1,\ldots, d-1\}$, $i_0\in\{1,\ldots, d-l\}$ and  $j_0\in\{i_0,\ldots, d-l\}$. Analogous to the proof of \eqref{eq:Hlqg1}, we apply \eqref{eq:zyjg}--\eqref{eq:zyj} together with 
$\HHq{q}{l}$ (to be precise $\HHq{q}{l}'$) in order to prove that for $t/2\le t-\tau\le t$ and $z\in A_k^{e_i}$, 
\begin{align}\label{eq:Hlq1g}
P_{t-\tau}f(z)&=\int_{B(y_0, \frac{\rho}{8})} p_{t-\tau}(z, y) f(y) dy\nn\\
&\le c t^{-d/\alpha} \|f\|_12^{-(\gam{1}{j_0-1}+\TT{j_0}{d-l})\alpha q}
2^{-\TT{d-l+1}{d}\alpha (1+\alpha^{-1})}\nn\\
&= c t^{-d/\alpha} \|f\|_12^{-\gam{1}{j_0-1}\alpha q}
\cdot F_{j_0}(l)
\end{align} 
where $F_{j_0}(l)$	is defined in \eqref{eq:prodlg}. Regarding the definition of $A_k$, recall $k=\sum_{j=1}^d \gamma_j$.
For $z\in A_k^{e_i}$, there exists $\gamma\in \N_0^d$ such that $z\in A_{k, \gamma}$ with  $\gamma_j=\lengthR_j \mbox{ or }\lengthR_j+1$ for $j\in\{j_0, \ldots, d\}$ (see, \autoref{rem:nervig}),
hence 
	\begin{align}\label{eq:gilg}
\gam{1}{j_0-1}+\TT{j_0}{d}\le	k\le\gam{1}{j_0-1}+\TT{j_0}{d}+d.
	\end{align}
Therefore, by \eqref{eq:Plg} and \eqref{eq:Hlq1g} with \eqref{eq:gilg},
\begin{align}\label{eq:Tig}
\cTi&=\sum_{k\ge\TT{j_0}{d}}\E^{x}\left[\1_{\{\tau\le t/2\}}\1_{\{X_{\tau}\in A_k^{e_i}\}}P_{t-\tau}f(X_{\tau})\right]\nn\\
&\le c t^{-d/\alpha} \|f\|_1
F_{j_0}(l)\sum_{k\ge \TT{j_0}{d}}2^{-\gam{1}{j_0-1}\alpha q}\cdot \Pp^x(\tau\le t/2, X_{\tau}\in A_k^{e_i})\nn\\
&\le c t^{-d/\alpha} \|f\|_1
F_{j_0}(l)\sum_{k\ge \TT{j_0}{d}} 2^{-\gam{1}{j_0-1}\alpha q} 2^{-\lengthR_{j_0}\alpha}\nn\\
& \le  c t^{-d/\alpha} \|f\|_1
F_{j_0}(l)\sum_{k\ge \TT{j_0}{d}} 2^{-(k-\TT{j_0}{d} -d)\alpha q} 2^{-\lengthR_{j_0}\alpha}\nn\\
&\le c t^{-d/\alpha} \|f\|_1
F_{j_0}(l)2^{-\lengthR_{j_0}\alpha}
= c t^{-d/\alpha} \|f\|_1
F_{j_0+1}(l)\cdot 2^{-\lengthR_{j_0}\alpha(1+q)}.
\end{align}

\subsection*{\bf Conclusion} 
Finally, we use the estimates \eqref{eq:S0g}, \eqref{eq:Sig} and \eqref{eq:Tig} in the representation \eqref{eq:dec-main}.
Since $\tfrac{t}{R_i^\alpha}=2^{-\lengthR_i\alpha}$ for $i\in \{1,\ldots, d\}$ by \eqref{d:thi_Ri} and
$$ F_{j_0+1}(l)=\prod_{j=j_0+1}^{d-l}
\left(\tfrac{t}{R_j^\alpha}\right)^q\prod_{j=d-l+1}^d\left(\tfrac{t}{R_j^{\alpha}}\right)^{1+\alpha^{-1}},$$
we obtain the upper bound of \eqref{eq:dec-main} as follows:
\begin{align}\label{eq:st}
&\E^{x}\left[\1_{\{\tau\le t/2\}}P_{t-\tau}f(X_{\tau})\right]
=\sum_{i=1}^{j_0-1}\cTi+\sum_{i=j_0}^{d}\cSi+\Phi(0)\nn\\
&\le c t^{-d/\alpha} \|f\|_1    \prod_{j=j_0+1}^{d-l}
\left(\tfrac{t}{R_j^\alpha}\right)^q\prod_{j=d-l+1}^d\left(\tfrac{t}{R_j^{\alpha}}\right)^{1+\alpha^{-1}} \cdot
\begin{cases}
\left(\tfrac{t}{R_{j_0}^\alpha}\right)^{1+q}
&\mbox{ if } q<\alpha^{-1}\\
\left(\tfrac{t}{R_{j_0}^\alpha}\right)^{1+\alpha^{-1}}
&\mbox{ if } q>\alpha^{-1}.
\end{cases}
\end{align}
This proves \autoref{prop:main} since $R_i\asymp |x_0^i-y_0^i|$ for $i\in\{1,\ldots, d\}$, cf. \eqref{d:thi_Ri}.

\section{Appendix}

In this section we provide the proof of the auxiliary result 
\autoref{lem:symmetry}. Its proof makes uses of a simple algebraic observation, which we provide first.

\medskip

\begin{lemma}\label{lem:number-symm}
	Assume $0 \leq q \leq a$, $l\in \{0,\ldots, d\}$ and $z^1, \ldots,  z^d > 0$. Let $\sigma: \{1, \ldots, d\} \to \{1, \ldots, d\}$ denote a permutation satisfying  $z^{\sigma(i)}\ge z^{\sigma(i+1)}$ for every $i\in\{1,\ldots, d-1\}$. Then
	\begin{align}\label{eq:monosig}
	\prod_{i=1}^{d-l}\left(z^{\sigma(i)}  \right)^{q}
	\prod_{i= d-l+1}^d \left(z^{\sigma(i)}  \right) ^a \leq \prod_{i=1}^{d-l}
	\left(z^{i}  \right)^{q}
	\prod_{i= d-l+1}^d \left(z^{i}\right)^a \,.
	\end{align}
\end{lemma}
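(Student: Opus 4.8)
The plan is to recognize \eqref{eq:monosig} as a rearrangement inequality and to reduce it to a single two-term exchange. The left-hand side of \eqref{eq:monosig} is the product obtained by attaching the exponent $q$ to the $d-l$ largest of the numbers $z^1,\dots,z^d$ and the exponent $a$ to the $l$ smallest (this is exactly what the ordering $z^{\sigma(1)}\ge\cdots\ge z^{\sigma(d)}$ achieves), while the right-hand side attaches $q$ to $z^1,\dots,z^{d-l}$ and $a$ to $z^{d-l+1},\dots,z^d$. So it suffices to show that, among all ways of attaching $d-l$ copies of the exponent $q$ and $l$ copies of the exponent $a$ to $z^1,\dots,z^d$, the arrangement that puts the larger exponent $a$ on the smaller values yields the smallest product.

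The only computation needed is the two-variable case: if $0\le q\le a$ and $0<u\le v$, then
\begin{align*}
v^{q}u^{a}\le v^{a}u^{q}.
\end{align*}
Dividing both sides by $v^{q}u^{q}>0$, this is equivalent to $u^{a-q}\le v^{a-q}$, which holds because $a-q\ge 0$ and $0<u\le v$. In words: swapping the exponents attached to two of the values so that the larger exponent moves onto the smaller value does not increase the product.

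To obtain the general statement I would run a selection-sort argument. Among $z^1,\dots,z^d$ pick one of smallest value; if it currently carries the exponent $q$, choose any index currently carrying $a$ and swap the two exponents — by the two-variable case the product does not increase, and now a smallest value carries $a$. Freeze that coordinate and repeat with the remaining $d-1$ values and the remaining exponents. After $d$ steps every copy of $a$ has been moved onto one of the $l$ smallest values, i.e.\ the arrangement has become the one on the left-hand side of \eqref{eq:monosig}, and the product has only decreased along the way; this yields \eqref{eq:monosig}. (Equivalently, one may take logarithms and invoke the classical rearrangement inequality for the decreasing sequence $\log z^{\sigma(1)}\ge\cdots\ge\log z^{\sigma(d)}$ and the exponent multiset consisting of $d-l$ copies of $q$ and $l$ copies of $a$.)

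The main — and quite mild — difficulty is bookkeeping: when several of the $z^i$ coincide, neither $\sigma$ nor the set of indices that end up carrying $a$ is unique, so one should fix one admissible $\sigma$ at the outset and observe that any two fully sorted arrangements give the same product. Making the termination of the swapping procedure precise (for instance by induction on $d$, as above) removes the remaining ambiguity, and the cases $l=0$ and $l=d$ are trivial. There is no deeper obstacle; \eqref{eq:monosig} is an elementary consequence of the displayed scalar inequality.
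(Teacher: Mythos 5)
Your proof is correct and takes essentially the same approach as the paper's: both reduce the inequality to the scalar two-term exchange $v^{q}u^{a}\le v^{a}u^{q}$ for $0<u\le v$, $0\le q\le a$, and then reach the sorted arrangement by a sequence of such swaps (the paper phrases this as an induction on $d$ via a decomposition of $\sigma$ into transpositions, while you phrase it as a selection-sort/rearrangement argument, which is the same mechanism).
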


\begin{proof}
	We prove \eqref{eq:monosig} by induction for $d$. First, consider the case $d=2$.  If $z^1> z^2$, then 
	$\sigma$ is identity and \eqref{eq:monosig} is trivial, so consider the case $z^1\le z^2$, in which case
	$\sigma=(1~2)$.  When $l=0$ or $l=2$, \eqref{eq:monosig} trivially holds with equality, so consider the case 
	$l=1$. In this case, \eqref{eq:monosig} is $\left(z^{2}  \right)^{q} \left(z^{1}  \right)^{a} \leq \left(z^{1}  \right)^{q} \left(z^{2}  \right)^{a}$, which is equivalent to $\left(z^{1}  \right)^{a-q} \leq \left(z^{2}  \right)^{a-q}$. This inequality holds true because the function $x \mapsto x^{a-q}$ is monotone for $x > 0$. We have proved \eqref{eq:monosig} for $d=2$.
	(Note that, by the same proof the above holds for transpositions of any pair of natural numbers $i<j$ with 
	$z^{\sigma(i)}\ge z^{\sigma(j)}$.) 

\medskip
	
	Now assume \eqref{eq:monosig} holds for $d\le m$ and let us prove it for $d=m+1$.  
	Let $\sigma: \{1, \ldots, m+1\} \to \{1, \ldots, m+1\}$ denote a permutation satisfying  $z^{\sigma(i)}\ge z^{\sigma(i+1)}$ for all $i\in\{1,\ldots, m\}$. Set $k=\sigma(1)$. We may assume $k\ne 1$ since otherwise \eqref{eq:monosig} holds by the induction hypothesis. Let $\hat\sigma: \{1, \ldots, k-1, k+1, \ldots, m+1\} \to \{1, \ldots, k-1, k+1, \ldots, m+1\}$
	be such that $\hat\sigma (i)=\sigma (i+1)$ for $1\le i\le k-1$ and $\hat\sigma(i)=\sigma (i)$ for $k+1\le i\le m+1$ if $k\le m$. Also, let $\tau_i=(k-i ~~ k)=(\sigma (1)-i ~~ \sigma (1))$ for $1\le i \le k-1$. Then it holds that 
	$z^{\hat\sigma(i)}\ge z^{\hat\sigma(j)}$ for $i<j$, $z^{\tau_i(k-i)}=z^k\ge z^{k-i}=z^{\tau_i (k)}$ for $1\le i \le k-1$, and that 
	\[
	\sigma=\hat\sigma\circ\tau_{k-1}\circ \cdots \circ\tau_1,
	\]
	where the order of operations is from the right to the left.
	Hence, using the induction hypothesis repeatedly, we obtain \eqref{eq:monosig} for $d=m+1$. 
\end{proof}

\medskip

\begin{proof}[Proof of \autoref{lem:symmetry}]
	The proof consists of two parts. Let us first show the inequality \eqref{eq:Hi-no-permute}. Assume $t > 0$ and $x,y \in \R^d$. Let $\sigma: \{1, \ldots, d\} \to \{1, \ldots, d\}$ denote a permutation such that $|x^{\sigma(i)}-y^{\sigma(i)}|\le |x^{\sigma(i+1)}-y^{\sigma(i+1)}|$ for every $i\in\{1,\ldots, d-1\}$. Assume that \eqref{eq:Hi-permute} holds true. We apply \autoref{lem:number-symm} with $z^i = \frac{t}{|x^i - y^i|^\alpha} \wedge 1$ and $a = 1 + \alpha^{-1}$.
	Then \eqref{eq:Hi-no-permute} follows from \autoref{lem:number-symm}.
	
	\medskip
	
	The second task is to show that $\HHq{q}{l}$ implies \eqref{eq:Hi-permute}. \autoref{rem:constant-Hlq} will be essential in this step. Recall that the jump kernel $J$ satisfies \eqref{eq:J-ellipticity-assum} and the corresponding heat kernel is denoted by $p_t(x,y)$. Assume $t > 0$ and $x_0,y_0 \in \R^d$. We want to show \eqref{eq:Hi-permute} for $t > 0$ and $x_0,y_0 \in \R^d$. Let $\sigma: \{1, \ldots, d\} \to \{1, \ldots, d\}$ denote a permutation such that $|x^{\sigma(i)}_0-y^{\sigma(i)}_0|\le |x^{\sigma(i+1)}_0-y^{\sigma(i+1)}_0|$ for every $i\in\{1,\ldots, d-1\}$. 
	
	\medskip
	
	For $x \in \R^d$,  
allowing abuse of notation denote by 
$\sigma(x)$ the point $(x^{\sigma(1)}, x^{\sigma(2)}, \ldots x^{\sigma(d)})$. Define a new jump kernel $J^\sigma$ by $J^\sigma(x,y)=J(\sigma(x),\sigma(y))$ and a new bilinear form $\mathcal{E}^{\sigma}$ by 
	\begin{align*}
	\mathcal{E}^{\sigma} (u,v) &=\int_{\Rd}\Big(\sum_{i=1}^d\int_{\R}\big(u(x+e^i \tau) - u(x)\big)\big(v(x+e^i \tau) -  v(x)\big) J^\sigma (x,x+e^{i} \tau) \d \tau \Big) \d x \,.
	\end{align*}
	The domain $D =\{u\in L^2(\Rd)| \; \mathcal{E} (u,u)<\infty\}$ stays unchanged. For $\lambda > 0$ we set $\mathcal{E}_\lambda (u,v) = \mathcal{E} (u,v) + \lambda (u,v)$ and define $\mathcal{E}^{\sigma}_\lambda$ analogously. Then for all $f,g \in D$
	\begin{align*}
	\mathcal{E}^\sigma_\lambda(f,g) = \mathcal{E}_\lambda (f \circ \sigma^{-1}, g \circ \sigma^{-1}) \,.
	\end{align*}
	Note that $(\mathcal{E}^\sigma, D)$ is a regular Dirichlet form just like $(\mathcal{E}, D)$. Let us denote the process corresponding to $(\mathcal{E}^\sigma, D)$ by $X^\sigma$, the semigroup by $P^\sigma$ and the corresponding heat kernel by $p^\sigma_t(x,y)$. 	Recall that we intend to show that $\HHq{q}{l}$ implies \eqref{eq:Hi-permute}. The variables $t>0$ and $x_0, y_0$ have been fixed at he beginning of the proof. Let us assume that we can show 
	\begin{align}\label{eq:p-p_sigma}
	p_t(x_0,y_0) = p^\sigma_t(\sigma(x_0),\sigma(y_0)) \,.
	\end{align}
	Because of \autoref{rem:constant-Hlq} and the fact that the tuple $(\sigma(x_0),\sigma(y_0))$ satisfies the required ordering we can apply \eqref{eq:Hi} to $p^\sigma_t$ and the points $\sigma(x_0),\sigma(y_0)$. Thus, the desired estimate in \eqref{eq:Hi-permute} for $x_0, y_0$ would follow. Hence, it is sufficient to prove \eqref{eq:p-p_sigma}. 
	
	\medskip
	
	In order to show \eqref{eq:p-p_sigma} it is sufficient to prove for every non-negative function $f$ and every $x \in \R^d$
	\begin{align}\label{eq:P-P_sigma}
	P^\sigma_t f(x) = P_t (f \circ \sigma) (\sigma^{-1} (x)) \,.
	\end{align}
	Condition \eqref{eq:P-P_sigma} implies for every $x \in \R^d$ and every $f \geq 0$
	\begin{align*}
	\int p^\sigma_t(x,y) f(y) \d y &= P_t^\sigma f(x) = P_t(f \circ \sigma)(\sigma^{-1}(x)) = \int p_t(\sigma^{-1}(x),z) f(\sigma(z)) \d z \\
	&= \int p_t(\sigma^{-1}(x),\sigma^{-1}(y)) f(y) \d y\,,
	\end{align*}
	which proves \eqref{eq:p-p_sigma}. In order to prove \eqref{eq:P-P_sigma} we introduce for $\lambda > 0$ the Green operators $G_\lambda, G^\sigma_\lambda$ in the usual way:
	\begin{align*}
	G_\lambda f(x) = \int\limits_0^\infty e^{-\lambda t} P_t f(x) \d t, \qquad G^\sigma_\lambda f(x) = \int\limits_0^\infty e^{-\lambda t} P^\sigma_t f(x) \d t\,. 
	\end{align*}  
	By the uniqueness of the Laplace 
	transform (note that we know $P_t f$ and $P^\sigma_t f$ are continuous in this case) 
	it is sufficient to prove for every non-negative function $f$ and every $x \in \R^d$
	\begin{align}\label{eq:G-G_sigma}
	G_\lambda^\sigma f(x) = G_\lambda (f \circ \sigma) (\sigma^{-1} (x)) \,.
	\end{align}
	Let $\phi, f$ be non-negative functions and $\lambda > 0$. Applying the reproducing property for $\mathcal{E}_\lambda$, i.e., the identity $\int u v \, \d x = \mathcal{E}_\lambda(G_\lambda u,v)$ resp. the analogous identity for $\mathcal{E}^\sigma_\lambda$, we obtain 
	\begin{align*}
	\int &\phi(x) G_\lambda(f \circ \sigma)(\sigma^{-1}(x)) \d x = \mathcal{E}_\lambda^\sigma \big( G_\lambda^\sigma \phi, G_\lambda(f \circ \sigma)(\sigma^{-1}(\cdot)) \big)  \\
	&= \mathcal{E}_\lambda \big( G_\lambda^\sigma \phi(\sigma^{-1}(\cdot)), G_\lambda(f \circ \sigma)(\sigma^{-1} \circ \sigma^{-1}(\cdot)) \big)  \\
	&= \int G_\lambda^\sigma \phi(\sigma^{-1}(x)) \big(f \circ \sigma \big) (\sigma^{-1}( \sigma^{-1}(x))) \, \d x \\
	&= \int G_\lambda^\sigma \phi(z)  f(z) \, \d z  = \int \phi(x) G_\lambda^\sigma f(x) \d x\,,
	\end{align*} 
	which proves \eqref{eq:G-G_sigma}. Note that the main ingredient in this part of the proof is the rotational invariant of the Lebesgue measure. The proof is complete.
\end{proof}

\begin{small}

\end{small}

\enlargethispage{\baselineskip}
	
\end{document}